\numberwithin{equation}{section}
\newcommand{\Real}{\mathbb R}
\newcommand{\norm}[1]{\|#1\|}
\newcommand{\abs}[1]{\left\vert#1\right\vert}
\newcommand{\set}[1]{\left\{#1\right\}}
\newcommand{\grad}{\nabla}
\newcommand{\K}{\mathcal{K}}
\newcommand{\F}{\mathcal{F}}
\newcommand{\supp}{\textup{supp}\,}
\newcommand{\e}{\epsilon}
\newcommand{\jap}[1]{\langle #1 \rangle} 
\newcommand{\RealPart}{\textup{Re}\,}
\newcommand{\ImPart}{\textup{Im}\,}
\newtheorem{theorem}{Theorem}
\theoremstyle{definition}
\newtheorem{remark}{Remark}
\theoremstyle{lemma}
\newtheorem{proposition}{Proposition}
\newtheorem{corollary}{Corollary}
\theoremstyle{definition}
\newtheorem{definition}{Definition}
\theoremstyle{lemma}
\newtheorem{lemma}{Lemma}
\begin{document}

\title{Global Existence and Finite Time Blow-Up for Critical Patlak-Keller-Segel Models with Inhomogeneous Diffusion} 

\author{Jacob Bedrossian\footnote{\textit{jacob@cims.nyu.edu}, New York University, Courant Institute of Mathematical Sciences.}, 
 Inwon C. Kim \footnote{\textit{ikim@math.ucla.edu}, University of California-Los Angeles, Department of Mathematics.}}

\date{}

\maketitle

\begin{abstract}
The $L^1$-critical parabolic-elliptic Patlak-Keller-Segel system is a classical model of chemotactic aggregation in micro-organisms well-known to have critical mass phenomena \cite{BlanchetEJDE06,Blanchet09}.
In this paper we study this critical mass phenomenon in the context of Patlak-Keller-Segel models with spatially varying diffusivity of the chemo-attractant. 
The primary issue is how, if possible, one localizes the presence of the inhomogeneity in the nonlocal term.
We also provide new blow-up results for critical homogeneous problems with nonlinear diffusion, showing that there exist blow-up solutions with arbitrarily large (positive) initial free energy.   
For several non-trivial technical reasons, which we discuss in more detail below, we work in dimensions three and higher
where $L^1$-critical variants of the PKS have porous media-type nonlinear diffusion on the organism density, resulting in finite speed of propagation and simplified functional inequalities.   
\end{abstract}

\section{Introduction} 
The most widely studied mathematical models of nonlocal aggregation phenomena are the parabolic-elliptic Patlak-Keller-Segel (PKS) models, originally introduced to study the chemotaxis of microorganisms \cite{Patlak,KS,Hortsmann,HandP}. 
In this paper we consider $L^1$-critical cases of the form,
\begin{equation} \label{def:ADD} 
\left\{
\begin{array}{l}
  u_t + \grad \cdot (u \grad c) = \Delta u^{2-2/d} \\
  -\grad \cdot (a(x)\grad c) + \gamma(x)c = u \\  
  u(0,x) = u_0(x) \in L_+^1(\Real^d;(1+\abs{x}^2)dx)\cap L^\infty(\Real^d), \;\; d \geq 3,
\end{array}
\right.
\end{equation}
where  $L_+^1(\Real^d;\mu) := \set{f \in L^1(\Real^d;\mu): f \geq 0}$. We define $m := 2 - 2/d$ which is the nonlinear diffusion power for which the system is $L^1$-critical. 
In this context, $L^1$-critical refers to the approximate balance of the opposing forces of diffusion and aggregation in the limit of $L^1$ concentration, indicating that there must be a non-zero, but finite, amount of mass concentration at any possible blow-up.
 As weak solutions to \eqref{def:ADD} conserve mass, we will henceforth refer to $\norm{u_0}_1 = \norm{u(t)}_1 = M(u)$.  
For all our work, we assume that $a(x)$ is strictly positive which ensures the PDE for $c$ to be uniformly elliptic. 
The model \eqref{def:ADD} is a generalization
of the classical parabolic-elliptic 2D PKS model which has received considerable attention over the years (see the review \cite{Hortsmann} and \cite{JagerLuckhaus92,Dolbeault04,BlanchetEJDE06,Blanchet08,BlanchetCarlenCarrillo10}).
For aggregation occurring in three dimensional environments, the nonlinear diffusion models an over-crowding phenomenon which retains the $L^1$ criticality, unlike the corresponding supercritical model with linear diffusion.
It is well-known that all such $L^1$ critical models exhibit critical mass phenomena: there exists some $M_c > 0$ such that if $M(u) < M_c$ then every solution exists globally, and if $M(u) > M_c$ then there exist solutions which blow up in finite time (see for instance \cite{Nagai95,BlanchetEJDE06,Blanchet08,Blanchet09,BRB10}).
We refer to the special case of $a(x) \equiv 1$, $\gamma(x) = 0$ as the \emph{scale-invariant problem}, because solutions are invariant under an $L^1$ scaling in space and this scaling symmetry plays a fundamental role in the global theory.

\vspace{10pt}

In this paper we estimate the critical mass, and under certain restrictions, show that this estimate is sharp for the PKS model \eqref{def:ADD} with spatially variable coefficients in the chemo-attractant PDE. 
The study of spatially variable coefficients raises questions which are both mathematically interesting and relevant for biological applications where chemotaxis occurs in a spatially inhomogeneous medium. 
We introduce inhomogeneity specifically to the nonlocal nonlinearity, since it is not obvious that the effect on the qualitative behavior should be localized.
One could also consider inhomogeneous mobility for the transport of $u$, but this does not add any new interesting complications, so for the sake of clarity we leave this generalization out (see Remark \ref{rmk:Mobility})

\vspace{10pt}

For many problems which are scale-invariant, determining critical thresholds is straightforward with a classical procedure: sharp functional inequalities to prove global existence below the threshold and virial methods to prove blow-up above (see more discussion below). 
As mentioned above, inhomogeneous critical problems can introduce new challenges and change the qualitative behavior of models. The inhomogeneity
will alter the behavior of the solution at all length-scales differently and well-established methods for homogeneous problems can potentially break down.
When proving global existence, the idea is to rely on an approximate scale-invariance which is recovered in the limit of infinitesimal length-scales. 
Naturally, the challenge in this kind of localization lies in the control of the nonlocal error.
On the other hand,  this viewpoint does not help at all when studying blow-up dynamics, as one needs to exploit some monotonicity which persists as the solution blows up and is valid for nonlocal, macroscopic interactions. 
Most approximations, including the available results involving mass comparison, will introduce unbounded errors as the solution blows up, making it challenging or impossible to apply straightforward monotonicity estimates. This issue makes the blow-up problem above critical mass significantly more subtle than the problem of global existence below critical mass. 
\vspace{10pt}

For the inhomogeneous models, our results state that blow-up is spatially localized: it only depends on the local (minimum) value of $a(x)$. 
We estimate the critical mass to be given by
\begin{equation*}
M_c = \left( \frac{2 \inf_{x \in \Real^d} a(x)}{(m-1)C_\star c_d} \right)^{d/2},
\end{equation*}
where $C_\star$ is the optimal constant in the Hardy-Littlewood-Sobolev inequality discussed in \cite{Blanchet09} (see \eqref{ineq:NHLS} below) and $c_d$ is the normalization constant in the Newtonian potential, given explicitly below in \eqref{def:cd}. 
We prove that if $M(u) < M_c$ then the solution is global and uniformly bounded in $L^\infty((0,\infty) \times \Real^d)$ (Theorem \ref{thm:GE}).
Further, we prove a stronger blow-up criteria which is also valid in supercritical mass regimes and shows that the critical mass is truly dependent on space, which accurately confirms the localized nature of the blow-up (Theorem \ref{thm:Concentration}). 
More precisely, we show that in order for a solution to blow-up at a time $T_\star \in (0,\infty]$, for every sequence of times $t_k$ there exists a subsequence (not relabeled) and a sequence of points $x_k$ such that 
\begin{equation*}
\limsup_{r \rightarrow 0^+} \limsup_{k \rightarrow \infty} \int_{\abs{x - x_k} < r} u(t_k,x) dx \geq \liminf_{k \rightarrow \infty}\left( \frac{2 a(x_k)}{(m-1)C_\star c_d} \right)^{d/2}. 
\end{equation*}  
Combining this with mass comparison methods, we prove that radially symmetric solutions with $M(u) = M_c$ are global and uniformly bounded. 
Note, the uniform bound is in contrast to the infinite time aggregation exhibited in $\Real^2$ \cite{Blanchet08} (see more discussion below).   
In the direction of finite time blow-up we prove that if $\gamma = 0$ and $a(x)$ is radially symmetric and monotone non-decreasing in a neighborhood of the origin, then for all $M > M_c$ we may construct a solution with $M(u) = M$ which blows up in finite time (Theorem \ref{thm:blowup}). 
See the discussion below for the proof of the blow-up result and the relevant construction of barriers.
\vspace{10pt}

A key quantity in the study of \eqref{def:ADD} is the dissipated \emph{free energy}, given by 
\begin{equation}
\F(u) = \frac{1}{m-1}\int u^m(x) dx - \frac{1}{2}\int u(x)c(x) dx. \label{def:F}
\end{equation}
The first term is usually referred to as the \emph{entropy} and the latter term is referred to as the \emph{interaction energy} or \emph{potential energy}. 
Formally, \eqref{def:ADD} is a gradient flow with respect to the Euclidean Wasserstein distance for \eqref{def:F} (see e.g. \cite{BlanchetCalvezCarrillo08}), but this will not be relevant for our work (however, see \cite{BlanchetCarlenCarrillo10} for work on the threshold problem with linear diffusion where this structure is the key tool). 

\vspace{10pt}

We consider the question of global existence first. 
It is well-known that solutions to systems such as \eqref{def:ADD} exist as long as they remain equi-integrable \cite{JagerLuckhaus92,CalvezCarrillo06,Blanchet09,BRB10}.  
The use of sharp functional inequalities to identify when a mixed-sign energy such as \eqref{def:F} is coercive
is classical, for example \cite{Weinstein83,Witelski04,BlanchetEJDE06,Blanchet09,BRB10}. 
In the context of classical PKS and similar models, sharp Hardy-Littlewood-Sobolev inequalities prove that when $M(u) < M_c$ the free energy uniformly controls the entropy (in this case the $L^m$ norm), which rules out any loss of equi-integrability (see \cite{Dolbeault04,BlanchetEJDE06,CalvezCarrillo06,Blanchet09,BRB10}).     
In \cite{BRB10}, it was shown for relevant systems with $c = \K \ast u$, for interaction kernels $\K$ in a general class, that the critical mass is governed only by the asymptotic expansion of $\K$ at the origin.
There, the asymptotic expansion describes the singular contribution to the potential energy $\int u c dx$ and the remaining error is `subcritical' in the sense that it can be controlled by norms weaker than the entropy. 
This is, in spirit, the approach taken here to prove Theorems \ref{thm:GE} and \ref{thm:Concentration} in \S\ref{sec:GE}, however a different method must be applied for \eqref{def:ADD}, since in our case $c$ is not given by a convolution. 
Instead, we use a pseudo-differential operator to explicitly evaluate the leading order contribution to the potential energy as a quadratic form and show that the remainder is subcritical.
This allows to refine the classical methods and estimate the critical mass. 
Additionally, the critical mass blow-up criterion (Theorem \ref{thm:Concentration}) is deduced by refining and iterating the arguments of \cite{Blanchet08} (see below for more information). A crucial new tool here is a geometric covering lemma which, combined with the concentration compactness principle \cite{LionsCC84}, allows to decompose blow-up solutions into isolated regions in which a localized energy argument can be applied.

\vspace{10pt}

Conventionally, the simplest way of proving blow-up for systems such as \eqref{def:ADD} above the critical threshold is the well-known virial method, used in, for example \cite{Nagai95,SugiyamaDIE06,Blanchet09,Senba02}.
Applied to the scale-invariant problem in $d\geq 3$, this method consists of two steps (see e.g. \cite{Blanchet09}). 
First, one uses supercritical mass and a scaling argument to construct initial data with negative free energy. 
Second, one shows that negative free energy would force the second moment to zero in finite time.  
Although we do not claim that this cannot be done for systems such as \eqref{def:ADD}, approximating the nonlocal term with pseudo-differential operators as done in the global existence results introduces error terms for which there seems to be no obvious way to control into blow-up.
Hence, we instead use a different finite time blow-up proof which is able to treat the local and nonlocal properties of the solution with sufficient precision. 

\vspace{10pt}

The proof of finite time blow-up is based on a maximum-principle type argument on the mass distribution of solutions. 
The use of mass comparison principles for models such as \eqref{def:ADD} is by now classical \cite{DiazNagai95,DiazNagaiRakotoson98,Senba02,JagerLuckhaus92,BilerKarchEtAl06,KimYao11,CieslakWinkler08}. However, our work is the first one (to our knowledge) which treats blow-up phenomena with mass arbitrarily close to the critical threshold. This is possible due to the precision of the barrier. 
Specifically, we compare solutions of \eqref{def:ADD} to the extremizers of the sharp Hardy-Littlewood-Sobolev inequality which governs the critical mass, which are also stationary solutions of the scale-invariant PDE (Theorem \ref{thm:SharpHLS}). More details are discussed in Section \S\ref{sec:FTBU}. 

\vspace{10pt}

Due to the nature of the near-critical regime we address, the mass comparison argument we adopt is rather delicate and depends on certain regularity properties of solutions as well as the barrier. Among other complications, the degenerate diffusion in \eqref{def:ADD} implies that classical regularity is not available everywhere and the support of solutions move at a finite speed.
In order to deal with the generic presence of the free boundary,  we utilize the viscosity solution theory developed for degenerate diffusion equations with drift (see \cite{KL})  to prove that strictly positive solutions remain positive until blow-up (see Appendix). Then we need to make a careful approximation argument with smooth, strictly positive solutions, as in \cite{KimYao11}.  We add that here extra care must be taken due to the finite time blow-up (Lemma \ref{lem:full_approx}).

\vspace{10pt}

With some modification, this blow-up proof can also provide estimates from above on how quickly mass concentrates, providing also a lower bound on the blow-up time by use of a mass supersolution, which is perhaps a more common use of mass comparison principles. Indeed, this is how the global existence at critical mass is proved for radial solutions (see below).
More interestingly, when applied to the scale-invariant problem, our method yields blow-up for a class of initial data that the results in \cite{Blanchet09}, obtained via a virial method, do not cover (see Theorem \ref{cor:HomogProblem} below).
In particular, we exhibit radially symmetric solutions with initially positive free energy that concentrate in finite time.  
This seems to indicate that the approach we employ could potentially provide different kinds of blow-up results elsewhere, even when straightforward virial methods can be applied. 

\vspace{10pt}

In the last section of the paper we prove that under the assumption of radial symmetry, solutions with critical mass $M(u) = M_c$ exist globally and are uniformly bounded. 
Unlike the homogeneous case, one cannot expect radial monotonicity to hold (even when $a(x)$ is radial), which means that the construction of a mass supersolution is not enough to deduce such a result without additional information.  
On the other hand, since the solutions have exactly critical mass, a simple corollary of Theorem \ref{thm:Concentration} below is that in order to blow up, such threshold solutions must concentrate all of the mass into a single point where the minimum of $a(x)$ is achieved.
Note that Theorem \ref{thm:Concentration} does not imply such rigid behavior at blow-up for solutions with larger than critical mass.
Mass comparison methods similar to those employed to prove finite time blow-up and the arguments in
\cite{BilerKarchEtAl06,Yao11,Senba02,KimYao11} are used to show that this concentration cannot occur in finite or infinite time.   
Hence, the solution must be global and uniformly bounded.  

\vspace{10pt}

We restrict ourselves to the case $\Real^d$ for $d \geq 3$ for several reasons. 
Firstly, note that the $d \geq 3$ case is qualitatively different than the $d = 2$ cases aside from the 
finite speed of propagation effects introduced by the nonlinear diffusion. 
For example, the behavior at the critical mass is different (\cite{Blanchet08,BlanchetCarlenCarrillo10} versus \cite{Blanchet09}) as well as the blow-up dynamics \cite{HerreroVelazquez96,HerreroMedinaVelazquez98,BlanchetLaurencot09,Blanchet11}.
To treat the $\Real^2$ case with our methods would require more sophisticated analysis techniques in the global existence argument (see Remark \ref{rmk:R2inGE} in \S\ref{sec:GE} for a discussion), as errors introduced by the pseudo-differential approximations must be controllable in terms of the Boltzmann entropy. 
There are also advantages in the blow-up argument due to the spatial localization provided by the finite speed of propagation in $d \geq 3$.
Despite these potential difficulties, we expect analogous results to hold in $\Real^2$.

\subsubsection{Notation}

In what follows, we denote $\norm{u}_p := \norm{u}_{L^p(\Real^d)}$ where $L^p(\Real^d) := L^p$ is the standard Lebesgue space. 
We define the homogeneous and inhomogeneous Sobolev spaces as the closure of the Schwartz space under the norms $\norm{u}^2_{\dot{H}^s} := \int \abs{\xi}^{2s}\abs{\hat{u}(\xi)}^2 d\xi$ and $\norm{u}^2_{H^s} := \int (1 + \abs{\xi}^2)^{s}\abs{\hat{u}(\xi)}^2 d\xi$, where $\hat{u}(\xi)$ denotes the Fourier transform of $u(x)$.  
We will often suppress the dependencies of functions on space and/or time to enhance readability. 
The standard characteristic function for some $S \subset \Real^d$ is denoted $\mathbf{1}_{S}$ and we denote the ball $B_R(x_0) := \set{x \in \Real^d : \abs{x - x_0} < R}$.
In addition, we use $\int f dx := \int_{\Real^d} f dx$, and only indicate the domain of integration 
where it differs from $\Real^d$. 
We use $\mathcal{N}$ to denote the Newtonian potential: 
\begin{equation*}
\mathcal{N}(x) = 
\left\{
\begin{array}{ll}
  \frac{1}{2\pi}\log \abs{x} & d = 2 \\ 
  \frac{\Gamma(d/2 + 1)}{d(d-2)\pi^{d/2}}\abs{x-y}^{2-d} & d \geq 3. 
\end{array}
\right.
\end{equation*}
In formulae we use the notation $C(p,M,..)$ to denote a generic constant which depends on parameters $p,M,...$, which may be different from line to line or term to term in the same formula. 
In general, these constants will depend on more parameters than those listed, for instance those which are fixed by the problem, such as $a(x)$ and the dimension, but these dependencies are usually suppressed.
We use the notation $f \lesssim_{p,k,...} g$ to denote $f \leq C(p,k,..)g$ where again, dependencies that are not relevant are sometimes suppressed.

\subsection{Background} 
The local theory for \eqref{def:ADD} is studied in \cite{BR11}. 
Here we simply discuss the results of that work, which follows closely the work of \cite{BlanchetEJDE06,SugiyamaDIE06,BertozziBrandman10,BertozziSlepcev10,BRB10}. We begin with the definition of weak solution, which is stronger than the concept of distribution solutions. 
The main purpose of this definition is to ensure that weak solutions are unique. 

\begin{definition}[Weak Solution]
A function $u(t,x):[0,T] \times \Real^d \rightarrow [0,\infty)$ is a weak solution of \eqref{def:ADD} if $u\in L^\infty((0,T)\times\Real^d)\cap L^\infty(0,T,L^1(\Real^d))$, $u^m\in L^2(0,T,\dot{H}^1(\Real^d))$, $u\grad c \in L^2((0,T)\times\Real^d)$, $u_t\in L^2(0,T,\dot{H}^{-1}(\Real^d))$, and for all test functions $\phi \in \dot{H}^{1}(\Real^d)$ for a.e $t\in [0,T]$, 
\begin{align}\label{WF1}
\left\langle u_t(t), \phi\right\rangle = \int \left(-\nabla u^m(t) + u(t)\grad c(t)\right)\cdot \nabla \phi\; dx, 
\end{align}
where $c(t)$ is the strong solution to the PDE $-\nabla \cdot (a(x)\nabla c(t)) + \gamma(x) c(t) = u(t)$ which vanishes at infinity.
\end{definition}

We state the following theorem summarizing the local theory of \eqref{def:ADD}, developed in \cite{BR11} as well as \cite{SugiyamaDIE06,Blanchet09,BRB10}. 
\begin{theorem} [Local Existence and Uniqueness] \label{thm:loc_theory}
Let $d \geq 3$, let $a(x) \in C^1$ be strictly positive such that both $a(x)$ and $\grad a(x)$ are bounded, let $\gamma(x) \in L^\infty$ be non-negative and $u_0 \in L^1_+(\Real^d;(1 + \abs{x}^2)dx) \cap L^\infty(\Real^d)$. Then there exists a maximal $T_+(u_0) > 0$ and a unique weak solution $u(t)$ to \eqref{def:ADD} which satisfies $u \in C([0,T];L_+^1(\Real^d;(1+\abs{x}^2)dx))\cap L^\infty((0,T);L^\infty(\Real^d))$ for all $T < T_+(u_0)$ and $u(0) = u_0$. Additionally, $\F(u_0) < \infty$ and we have the energy dissipation inequality, 
\begin{equation}
\F(u(t)) + \int_0^t \int u(s)\abs{\grad \frac{m}{m-1}u^{m-1}(s) - \grad c(s)}^2 dx ds \leq \F(u_0). \label{ineq:EnrDiss}
\end{equation}
We also have the continuation criterion: if 
\begin{equation*}
\lim_{k \rightarrow \infty} \sup_{t \in [0,T_+(u_0))} \norm{ (u(t)- k)_+}_1 = 0, 
\end{equation*}
then necessarily $T_+(u_0) = \infty$ and $u(t,x) \in L^\infty(\Real^+ \times \Real^d)$. 
\end{theorem}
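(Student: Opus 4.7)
The plan is to prove this by a vanishing viscosity / regularization scheme combined with standard parabolic fixed-point methods, mimicking the strategy used in \cite{BlanchetEJDE06,SugiyamaDIE06,BRB10} but adapted to the variable-coefficient elliptic problem for $c$. First I would regularize the equation in two ways: (i) replace $\Delta u^m$ by $\Delta u^m + \e\Delta u$ so the PDE becomes uniformly parabolic, and (ii) mollify the initial data $u_0$ to $\reg{u_0} \in C_c^\infty \cap L^1_+$. For the regularized system, the elliptic problem $-\nabla\cdot(a(x)\grad \reg{c}) + \gamma(x)\reg{c} = \reg{u}$ can be solved uniquely by Lax--Milgram (recall $a$ is bounded, strictly positive, and $\grad a$ bounded, while $\gamma \geq 0$ is bounded), yielding $\reg{c} \in H^1(\Real^d)$ with standard Schauder/Calder\'on--Zygmund regularity $\reg{c} \in W^{2,p}_{loc}$ for $\reg{u} \in L^p_{loc}$, $p < \infty$. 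Local existence of a classical, strictly positive, smooth solution $\reg{u}$ follows from Banach fixed-point iteration on the map $v \mapsto$ [solve linear drift-diffusion with drift $\grad c[v]$] in a suitable parabolic H\"older or Bessel-potential space.

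The next step is to derive a priori estimates for $\reg{u}$ that are uniform in $\e$. The $L^1$ norm is conserved by integration, and a second-moment estimate follows from testing with $\abs{x}^2$ and using $\grad c \in L^2$ to control the drift term. Testing \eqref{def:ADD}$_\e$ with $\frac{m}{m-1}u^{m-1} - c$ and using the elliptic equation for $c$ yields, after some algebra exploiting $a(x) > 0$, the energy identity
\begin{equation*}
\frac{d}{dt}\energy(\reg{u}) + \int \reg{u}\Bigl|\grad\tfrac{m}{m-1}\reg{u}^{m-1} - \grad \reg{c}\Bigr|^2 dx + \e(\text{positive terms}) = 0,
\end{equation*}
which gives \eqref{ineq:EnrDiss} in the limit. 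An $L^\infty$ bound uniform in $\e$ on any interval $[0,T']$ (with $T'$ depending only on $\norm{u_0}_1,\norm{u_0}_\infty$) is obtained by Moser/Stampacchia iteration on $\int (\reg{u}-k)_+^p\,dx$: the nonlocal term is controlled by the Gagliardo--Nirenberg--Sobolev inequality applied to the entropy dissipation plus the elliptic estimate $\norm{\grad \reg{c}}_q \lesssim \norm{\reg{u}}_r$ for appropriate exponents, exactly as in \cite{BlanchetEJDE06,SugiyamaDIE06,BRB10}, with the spatial variability of $a$ only contributing bounded multiplicative constants.

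The passage to the limit $\e \to 0$ uses the uniform bounds in $L^\infty([0,T'];L^1 \cap L^\infty)$ together with the bound on $\reg{u}^m$ in $L^2(0,T';\dot H^1)$ and on $\partial_t \reg{u}$ in $L^2(0,T';\dot H^{-1})$ coming from the weak form. By Aubin--Lions, $\reg{u}$ is strongly compact in $L^2((0,T')\times B_R)$ for each $R$, and the tightness provided by the second moment rules out loss of mass at infinity; one then verifies that the limit satisfies the weak formulation, with $\reg{c} \to c$ by continuity of the elliptic solution operator. Lower semicontinuity of the dissipation gives \eqref{ineq:EnrDiss} for the limit. For uniqueness, given two weak solutions $u_1, u_2$ with the same data, I would test the equation for $u_1 - u_2$ with $\mathcal{N} \ast (u_1-u_2)$ (or the Green's function for $-\nabla\cdot(a\nabla\cdot)+\gamma$), exploiting the monotonicity of $s \mapsto s^m$ to obtain a Gronwall estimate in $\dot H^{-1}$ (or the weighted analogue); the $L^\infty$ bound on both solutions makes the cross term manageable.

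For the continuation criterion, the key point is that the $L^\infty$-iteration above actually yields an estimate of the form $\sup_{[0,T']} \norm{\reg{u}}_\infty \leq C(\norm{u_0}_\infty, T', \sup_{[0,T']}\norm{(u(t)-k)_+}_1)$ for $k$ sufficiently large, because the dangerous supercritical part of the nonlocal term scales with the $L^1$ norm above level $k$. Thus if $\lim_{k\to\infty}\sup_t \norm{(u(t)-k)_+}_1 = 0$ on $[0,T_+)$, one can extend $u$ past any finite $T_+$, forcing $T_+ = \infty$ and global $L^\infty$ control. The main obstacle throughout is handling the degenerate diffusion together with the variable-coefficient elliptic operator: the pseudo-differential structure of the solution map $u \mapsto c$ is no longer a convolution, so sharp estimates (as developed later in \S\ref{sec:GE}) are needed to make the $L^\infty$ Moser iteration and the energy identity behave as cleanly as in the homogeneous case. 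The $\e\Delta u$ regularization is essential for obtaining smooth, strictly positive approximations to justify these manipulations rigorously.
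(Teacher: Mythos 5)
The paper does not prove Theorem~\ref{thm:loc_theory} at all: it is stated as a recalled result, with the proof attributed to \cite{BR11} and the earlier works \cite{SugiyamaDIE06,Blanchet09,BRB10,BlanchetEJDE06}. Your proposal is a reasonable high-level sketch of the strategy those references use — parabolic regularization by $\epsilon\Delta u$, mollified initial data, Lax--Milgram for the elliptic problem, $L^1$/second-moment/free-energy a~priori bounds, Moser--Alikakos iteration for $L^\infty$, Aubin--Lions compactness to pass to the limit, $\dot H^{-1}$ Gronwall for uniqueness, and equi-integrability as the continuation criterion — so it is compatible with what the paper cites rather than a genuinely different route.

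Two points deserve more care if this were to be turned into a full proof. First, in deriving the energy dissipation identity you need $\frac{d}{dt}\int u c\,dx = 2\int u_t\,c\,dx$, which relies on self-adjointness of $L = -\nabla\cdot(a\nabla\cdot)+\gamma$; this is what makes the variable-coefficient potential energy behave like the convolution case, and it is worth stating explicitly rather than burying in ``after some algebra.'' Second, your remark that the Moser iteration ``is needed to behave as cleanly as in the homogeneous case'' because the $u\mapsto c$ map ``is no longer a convolution'' overstates the difficulty for the local theory: the only elliptic estimates needed here are the uniform Calder\'on--Zygmund/Sobolev bounds $\|\grad c\|_q \lesssim \|u\|_r$, which hold for the variable-coefficient divergence-form operator under the stated hypotheses on $a,\gamma$ without any of the pseudo-differential machinery of \S\ref{sec:GE}; that machinery is used only to extract the sharp constant $c_d/a(x)$ for the critical-mass threshold, not for local well-posedness.
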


The results regarding the critical mass in the constant-coefficient case are summarized in the following theorem.
  
\begin{theorem}[Critical Mass \cite{Blanchet09,BRB10}] \label{thm:critmass}
Suppose $a(x) = a$ and $\gamma(x) = \gamma$ are both constants. 
Then the sharp critical mass satisfies, 
\begin{equation*}
M_c = \left( \frac{2a}{(m-1)C_\star c_d} \right)^{d/2},  
\end{equation*}
and if $u(t,x)$ is a weak solution to \eqref{def:ADD} with $M(u_0) < M_c$, then $u(t,x)$ exists globally, e.g. $T_+(u_0) = \infty$, and we have $u(t,x) \in L^\infty(\Real^+ \times \Real^d)$. Conversely for all $M > M_c$ there exists a solution to \eqref{def:ADD} which blows up in finite time with $M(u_0) = M$. 
Here $C_\star$ is the optimal constant in the Hardy-Littlewood-Sobolev inequality \eqref{ineq:NHLS} below and $c_d$ is the normalization factor in the Newtonian potential:
\begin{equation}
c_d := \frac{\Gamma\left(\frac{d}{2}+1\right)}{d(d-2)\pi^{d/2}}. \label{def:cd}
\end{equation}
\end{theorem}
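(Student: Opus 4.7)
The plan is to apply the classical critical-mass program for scale-invariant PKS systems: a sharp HLS inequality below threshold, and a virial argument above it.

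For global existence when $M(u_0) < M_c$: since $a,\gamma$ are constant, $c$ is the convolution of $u$ against a Bessel-type kernel whose leading singularity at the origin is $a^{-1}\mathcal{N}$. Writing
\begin{equation*}
\tfrac{1}{2}\int uc\,dx = \tfrac{c_d}{2a}\iint \frac{u(x)u(y)}{|x-y|^{d-2}}\,dx\,dy + R(u),
\end{equation*}
with $R(u)$ a strictly subcritical bilinear form that vanishes when $\gamma = 0$, the sharp HLS inequality $\iint|x-y|^{2-d}u(x)u(y)\,dx\,dy \leq C_\star \|u\|_1^{2/d}\|u\|_m^m$ from \cite{Blanchet09} yields
\begin{equation*}
\F(u) \geq \left[\tfrac{1}{m-1} - \tfrac{c_d C_\star}{2a}M^{2/d}\right]\|u\|_m^m - |R(u)|.
\end{equation*}
The bracket is strictly positive precisely when $M < M_c$, and $|R(u)|$ is controlled by Lebesgue norms weaker than $L^m$. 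Together with the energy-dissipation inequality \eqref{ineq:EnrDiss}, this produces a time-uniform bound on $\|u(t)\|_m$, which by Chebyshev gives uniform equi-integrability; the continuation criterion in Theorem \ref{thm:loc_theory} then forces $T_+(u_0) = \infty$ and $u \in L^\infty(\Real^+\times\Real^d)$.

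For finite-time blow-up when $M > M_c$: a direct integration by parts using $-a\Delta c + \gamma c = u$ together with $m - 1 = (d-2)/d$ produces the virial identity
\begin{equation*}
\frac{d}{dt}\int |x|^2 u\,dx = 2d\int u^m\,dx - (d-2)\int uc\,dx - 2\gamma\int c^2\,dx \leq 2(d-2)\F(u_0).
\end{equation*}
Since the second moment is nonnegative, it suffices to produce initial data of mass $M$ with $\F(u_0)<0$. Take $u_0$ to be the HLS extremizer of mass $M$, a compactly supported Aronson-B\'enilan profile $(R^2-|x|^2)_+^{1/(m-1)}$ appropriately normalized; when $\gamma = 0$, saturation of HLS immediately yields $\F(u_0) = \|u_0\|_m^m[\tfrac{1}{m-1} - \tfrac{c_d C_\star}{2a}M^{2/d}]<0$. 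When $\gamma>0$, apply the mass-preserving rescaling $u_\lambda(x) = \lambda^d u_0(\lambda x)$: the Newton interaction and entropy both scale as $\lambda^{d-2}$ while $R(u_\lambda)$ is subcritical in $\lambda$, so choosing $\lambda$ large produces $\F(u_\lambda) < 0$.

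The main obstacle is the control of the subcritical remainder $R(u)$ when $\gamma > 0$: one must verify that the non-Newton part of the Bessel kernel leaves an error that is subcritical both functionally (for the HLS-based global-existence bound) and in scaling (for the virial construction). This is the finite-dimensional shadow of the pseudo-differential localization that the body of the paper will develop to treat variable $a(x)$.
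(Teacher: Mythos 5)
This theorem is quoted from \cite{Blanchet09,BRB10} and the paper itself offers no proof of it; your proposal correctly reconstructs the standard argument from those references. The global-existence half — split the Bessel kernel into the Newtonian singularity $c_d\,a^{-1}\abs{x}^{2-d}$ plus a subcritical tail, apply the sharp HLS \eqref{ineq:NHLS} and the dissipation inequality \eqref{ineq:EnrDiss}, and invoke the continuation criterion in Theorem \ref{thm:loc_theory} — is exactly the convolution-kernel version of what the paper generalizes in \S\ref{sec:GE} by pseudo-differential means when $a$ is variable. Your virial identity also checks out: with $m-1 = (d-2)/d$, integrating $\abs{x}^2$ against the equation and using the elliptic identity $\int u\,c\,dx = a\int\abs{\grad c}^2\,dx + \gamma\int c^2\,dx$ gives $\frac{d}{dt}\int\abs{x}^2 u\,dx = 2(d-2)\F(u) - 2\gamma\int c^2\,dx \leq 2(d-2)\F(u_0)$, so $\F(u_0)<0$ forces the finite second moment to vanish in finite time, and your construction of such data (HLS extremizer of mass $M>M_c$ when $\gamma=0$; a mass-preserving rescaling $u_\lambda$ making the subcritical correction negligible when $\gamma>0$) is the right one.

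One point of comparison worth making explicit: the paper deliberately does \emph{not} extend this virial argument to variable $a(x)$. The introduction notes that once the potential energy is approximated by a pseudo-differential operator, the resulting error term has no obvious control as the solution concentrates, which is why the paper's blow-up result (Theorem \ref{thm:blowup}) is proved instead by a mass-comparison barrier built from the HLS extremizer. Your closing remark about the need to control the subcritical remainder $R(u)$ in both a functional and a scaling sense is precisely the obstruction that motivates that change of method.
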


In \cite{Blanchet09}, Blanchet et. al. exhibit a unique family of stationary solutions to the scale-invariant problem, based on which we will construct barriers used in the proof of finite time blow-up in section 3.

\begin{theorem}[Stationary Solutions to Scale-Invariant Problem \cite{Blanchet09}] \label{thm:V}
There exists a non-negative, radially symmetric, non-increasing function $V(x)$ supported in the ball of radius one with $\norm{V}_1 = \left( \frac{2}{(m-1)C_\star c_d} \right)^{d/2}$ which is the unique solution (up to $L^1$ scaling and translation) of
\begin{equation*}
\Delta V^m = \grad \cdot ( V \grad \mathcal{N} \ast V). 
\end{equation*}
\end{theorem}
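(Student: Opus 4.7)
The plan is to reduce the stationary equation to a classical Lane--Emden problem on a ball and then read off the mass from the sharp Hardy--Littlewood--Sobolev (HLS) inequality.

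First, I would integrate the stationary equation. Since $V$ is to be radial and compactly supported in some $B_R$, with $c := \mathcal{N}\ast V$ satisfying $-\Delta c = V$, the PDE rewrites as $\grad\cdot(\grad V^m - V\grad c) = 0$. Both vector fields vanish outside $B_R$ and are radial, so integrating from infinity gives $\grad V^m = V\grad c$ pointwise in $\Real^d$. On $\{V > 0\}$ this becomes $\tfrac{m}{m-1}\grad V^{m-1} = \grad c$, whence
\begin{equation*}
V^{m-1} = \tfrac{m-1}{m}(c - c_*)_+, \qquad c_* := c(R).
\end{equation*}
Using $1/(m-1) = d/(d-2)$ and substituting into $-\Delta c = V$ yields
\begin{equation*}
-\Delta c = \bigl(\tfrac{m-1}{m}\bigr)^{d/(d-2)}(c - c_*)_+^{d/(d-2)} \quad \text{in } \Real^d,
\end{equation*}
which is a Lane--Emden equation at exponent $d/(d-2)$ (Sobolev-subcritical) on $B_R$ with Dirichlet datum $c=c_*$, matched outside $B_R$ by the harmonic extension $c(r) = c_*(R/r)^{d-2}$.

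Second, I would invoke classical theory for this semilinear elliptic problem. Existence of a positive radial solution follows from variational minimization combined with Schwarz symmetrization, while uniqueness of positive radial solutions (for fixed $R$) is provided by Ni--Nussbaum/Kwong ODE uniqueness for subcritical Lane--Emden equations in a ball. The $L^1$ rescaling $V_\lambda(x) = \lambda^d V(\lambda x)$ of PKS acts transitively on the resulting one-parameter family indexed by the support radius, so all such stationary profiles coincide modulo $L^1$ scaling (and translation, once the center is relaxed).

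Third, to pin down the mass, I would combine scale invariance with sharp HLS. A direct computation shows $\int V_\lambda^m = \lambda^{d-2}\int V^m$ and $\int V_\lambda c_\lambda = \lambda^{d-2}\int V c$, so $\F(V_\lambda) = \lambda^{d-2}\F(V)$; since $V_\lambda$ preserves mass and $V$ is a critical point of $\F$ under that constraint, differentiating at $\lambda = 1$ forces $\F(V) = 0$. The sharp HLS inequality of \cite{Blanchet09} reads $\int V c \leq C_\star c_d \norm{V}_1^{2/d}\norm{V}_m^m$, whence
\begin{equation*}
0 = \F(V) \geq \Bigl(\tfrac{1}{m-1} - \tfrac{C_\star c_d}{2}\norm{V}_1^{2/d}\Bigr)\norm{V}_m^m,
\end{equation*}
with equality iff $V$ realizes the HLS extremum. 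Both must be equalities, giving $\norm{V}_1^{2/d} = 2/\bigl((m-1)C_\star c_d\bigr)$, i.e., the claimed mass.

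The main obstacle I expect is the uniqueness statement: existence and the mass identity are relatively standard once the reduction is in place, but ruling out other compactly supported radial stationary profiles - in particular, those with disconnected support - and invoking Lane--Emden ODE uniqueness at exponent $d/(d-2)$ require a careful shooting plus Sturm--Liouville monotonicity argument, which is the technically most delicate part of the proof.
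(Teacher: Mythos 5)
This theorem is quoted directly from Blanchet--Carrillo--Lauren\c{c}ot (your \cite{Blanchet09}); the paper itself offers no proof, so the comparison is against the literature, not against the paper. Your Lane--Emden reduction is the right skeleton: on $\{V>0\}$ one does get $\tfrac{m}{m-1}V^{m-1}=c-c_*$, and with $1/(m-1)=d/(d-2)$ this becomes a subcritical Lane--Emden equation for $w:=(c-c_*)_+$ on a ball, with existence by symmetrization/minimization and radial uniqueness by Ni--Nussbaum-type ODE arguments; your flag that disconnected support needs to be excluded separately is also on target. The scaling computation $\F(V_\lambda)=\lambda^{d-2}\F(V)$ together with the critical-point property along the dilation flow does yield $\F(V)=0$.

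The gap is in the mass determination. From $\F(V)=0$ and HLS you only obtain
\begin{equation*}
0 \;=\; \frac{1}{m-1}\|V\|_m^m - \frac{1}{2}\int Vc \;\geq\; \Bigl(\frac{1}{m-1}-\frac{C_\star c_d}{2}\|V\|_1^{2/d}\Bigr)\|V\|_m^m,
\end{equation*}
which forces $\|V\|_1 \geq \bigl(2/((m-1)C_\star c_d)\bigr)^{d/2}$, a one-sided bound. Your phrase ``Both must be equalities'' is not justified by anything you have established: nothing so far shows that the stationary profile $V$ saturates HLS. That saturation is precisely the content of Theorem~\ref{thm:SharpHLS}, which identifies the HLS extremals with the rescaled stationary profiles; invoking it here makes the two theorems mutually dependent rather than giving a self-contained derivation. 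The standard route (as in Blanchet et al.) is to prove existence and radial symmetry of HLS maximizers first, derive their Euler--Lagrange equation, observe it is the same Lane--Emden problem, and then use the ODE uniqueness you cite to conclude that stationary solutions and HLS extremals coincide; only then does $\F(V)=0$ pin down $\|V\|_1$. Alternatively, one can bypass HLS entirely and compute $\|V\|_1$ directly from the normalized Lane--Emden solution (the coefficient $\bigl(\tfrac{m-1}{m}\bigr)^{d/(d-2)}$ fixes the normalization, while the boundary-flux matching is automatic because $\sigma c_d(d-2)=1$), but that yields the mass in terms of the Lane--Emden ground state, not in terms of $C_\star$, so the comparison to $C_\star$ still requires the extremizer identification. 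Either way, the missing step is establishing that $V$ is an HLS extremal, not just that $\F(V)=0$.
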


\begin{remark} \label{rmk:RescaledV}
Note that if $a > 0$ and $\tilde{V} = a^{d/2} V$, then 
\begin{equation*}
\Delta \tilde{V}^{2-2/d} = \frac{1}{a}\grad \cdot (\tilde{V} \grad \mathcal{N} \ast \tilde{V})
\end{equation*} 
and in light of Theorem \ref{thm:critmass} above, $\tilde{V}$ are the unique (up to $L^1$ scaling and translation) stationary solutions to the problem 
\begin{equation*} 
\left\{
\begin{array}{l}
  u_t + \grad \cdot (u \grad c) = \Delta u^{2-2/d} \\
  -a\Delta c = u.   
\end{array}
\right.
\end{equation*}
\end{remark}

In \cite{Blanchet09}, it is also shown that these stationary solutions are the unique extremals of the following Hardy-Littlewood-Sobolev type inequality. 

\begin{theorem}[Sharp Hardy-Littlewood-Sobolev Inequality \cite{Blanchet09}] \label{thm:SharpHLS}
There exists some optimal $C_\star > 0$ depending only on the dimension such that for all $f \in L_+^1 \cap L^{m}$, 
\begin{equation}
\int\int f(x)f(y)\abs{x-y}^{2-d} dy dx  \leq C_\star \norm{f}_1^{2-m}\norm{f}_m^m, \label{ineq:NHLS}  
\end{equation}
and equality is achieved if and only if there exists $\alpha_0 \in \Real$, $x_0 \in \Real^d, \; \lambda_0 \in (0,\infty)$ such that 
\begin{equation*}
f(x) = \frac{\alpha_0}{\lambda_0^d}V \left( \frac{x-x_0}{\lambda_0} \right). 
\end{equation*}
\end{theorem}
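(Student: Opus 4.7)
The plan is to prove the inequality by a variational method: identify $C_\star$ as the supremum of the translation- and scale-invariant ratio
$$R[f] := \frac{\iint f(x)f(y)|x-y|^{2-d}\,dx\,dy}{\|f\|_1^{2-m}\,\|f\|_m^m},\qquad 0\not\equiv f\in L^1_+\cap L^m,$$
produce an extremizer, and match its Euler-Lagrange equation with the stationary profile equation of Theorem \ref{thm:V}. Finiteness of $C_\star$ comes for free from the classical Hardy-Littlewood-Sobolev inequality $\iint f(x)f(y)|x-y|^{2-d}\,dx\,dy\lesssim\|f\|_{2d/(d+2)}^2$ combined with the log-convexity bound $\|f\|_{2d/(d+2)}\le\|f\|_1^{1/d}\|f\|_m^{1-1/d}$ (the interpolation exponents are forced by $m=2-2/d$), yielding the correct homogeneous form with an a priori non-sharp constant. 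The real work is in the sharp constant and in characterizing the equality cases.

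To produce an extremizer I would first apply Riesz rearrangement: symmetric decreasing rearrangement preserves every $\|f\|_p$ but increases the bilinear self-energy, so $R[f]\le R[f^\ast]$ and it suffices to consider radial non-increasing profiles. The remaining invariances of $R$ are scalar multiplication and the $L^1$-preserving dilation $f\mapsto\lambda^d f(\lambda\cdot)$, which I would remove by fixing $\|f\|_1=\|f\|_m^m=1$. Applying Lions's concentration-compactness principle to a maximizing sequence, vanishing is excluded because it forces the numerator of $R$ to zero, and dichotomy is excluded by the strict superadditivity
$$M^{2-m}A^m > M_1^{2-m}A_1^m + M_2^{2-m}A_2^m,\qquad M=M_1+M_2,\ A^m=A_1^m+A_2^m,\ M_i,A_i>0,$$
which follows from the elementary fact $M^{2-m}>M_i^{2-m}$ since $2-m=2/d>0$; any split profile therefore satisfies $R<C_\star$. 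The surviving compactness alternative produces an $L^m$-weak limit $f_\star$, which a Brezis-Lieb argument together with convergence of the bilinear form upgrades to a genuine extremizer.

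The Euler-Lagrange equation for $f_\star$, with multipliers $\mu,\nu$ for the two constraints, is
$$2\int f_\star(y)|x-y|^{2-d}\,dy=\mu+\nu m f_\star^{m-1}(x)\quad\text{on } \{f_\star>0\}.$$
Writing $c_\star:=\N\ast f_\star$ (so $-\Delta c_\star=f_\star$ and $|x-y|^{2-d}=\N(x-y)/c_d$) and using the identity $\nabla f_\star^m=f_\star\nabla\!\left(\tfrac{m}{m-1}f_\star^{m-1}\right)$, taking the gradient of the Euler-Lagrange identity on $\{f_\star>0\}$ and then the divergence gives
$$\Delta f_\star^m=\kappa\,\nabla\cdot(f_\star\nabla c_\star),\qquad \kappa=\tfrac{2}{\nu(m-1)c_d},$$
and the vanishing of $f_\star$ on the free boundary promotes this to a global distributional identity. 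Rescaling $f_\star$ by $\kappa^{1/(2-m)}$ absorbs $\kappa$ and places $f_\star$ in the form $\Delta V^m=\nabla\cdot(V\nabla\N\ast V)$ of Theorem \ref{thm:V}; its uniqueness clause then forces $f_\star$ to coincide, up to scalar multiplication, $L^1$-preserving dilation, and translation, with $V$. Since $R$ is invariant under exactly these three operations, this is the full extremal family, and $C_\star=R[V]$.

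The main obstacle is the existence step: $R$ is invariant under a noncompact three-parameter group, so maximizing sequences can escape in several independent directions and the subadditivity check inside concentration-compactness is where the real content lies. Once the extremizer is in hand, passage from the Euler-Lagrange equation to the stationary profile equation and readoff of the extremal family from Theorem \ref{thm:V} are short algebraic steps.
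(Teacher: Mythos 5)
This theorem is not proved in the paper at all: it is imported verbatim from \cite{Blanchet09} (the label and the bracketed citation in the theorem header make this explicit), so there is no ``paper's own proof'' to compare against. What you have written is a proof \emph{of the cited result}, and it does in fact track the strategy used by Blanchet, Carrillo, and Laurençot: bound the scale- and translation-invariant ratio via classical HLS plus interpolation, pass to symmetric decreasing rearrangements, run a concentration-compactness argument to produce a maximizer, write the Euler--Lagrange equation, and identify the maximizer with the stationary profile of Theorem \ref{thm:V}. Your interpolation exponents, the superadditivity estimate $M^{2-m}(A_1^m+A_2^m) > M_1^{2-m}A_1^m + M_2^{2-m}A_2^m$ (an immediate consequence of $M^{2-m} > M_i^{2-m}$), the Euler--Lagrange identity $\frac{2}{c_d}c_\star = \mu + \nu m f_\star^{m-1}$ on $\{f_\star > 0\}$, and the rescaling $\beta = \kappa^{1/(2-m)}$ are all correct.

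Two caveats you should be aware of. First, your final step appeals to the uniqueness clause of Theorem \ref{thm:V}, which is \emph{also} a theorem of \cite{Blanchet09}; this is logically consistent (in the source, uniqueness of the radial stationary profile is established by ODE methods independent of the sharp HLS), but it means your proof is not self-contained and silently delegates the hardest analytic input. Second, promoting the Euler--Lagrange identity from $\{f_\star>0\}$ to a global distributional equation for $\Delta f_\star^m = \kappa\,\nabla\cdot(f_\star\nabla c_\star)$ requires that $f_\star^{m-1}$ be Lipschitz up to the free boundary (so that both $\nabla f_\star^m$ and $f_\star\nabla c_\star$ extend by zero without creating surface terms); this regularity does follow from the Euler--Lagrange relation itself together with the smoothness of $c_\star = \N \ast f_\star$, but it is a genuine step, not an automatic one, and should be spelled out rather than asserted.
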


We will also use the following more general inequality, although we have no need of the sharp constant,   
\begin{equation} 
\int\int f(x) g(y) K(x-y) dx dy \lesssim_{p,q,t} \norm{f}_p \norm{g}_q \norm{K}_{L^{t,\infty}}. \label{ineq:Gconvo}
\end{equation} 
Here, $\norm{\cdot}_{L^{t,\infty}}$ denotes the weak $L^t$ space. 

\subsection{Summary of Results} \label{sec:Summary}

We now state the main results. The higher regularity on $a(x)$ and $\gamma(x)$ is assumed so we may use standard symbol classes. It is likely that significantly less regularity is required for the same results. 

\medskip

\begin{theorem}[Global Existence at Subcritical Mass] \label{thm:GE}
Let $d\geq 3$ and $a(x)\in C^\infty(\mathbb{R}^d)$ be strictly positive and $\gamma(x) \in C^\infty(\Real^d)$ be non-negative such that $D^\alpha a$ and $D^\alpha \gamma$ are bounded for all multi-indices $\alpha$.
Then we may estimate the critical mass as  
\begin{equation}
M_c = \left( \frac{2 \inf_{x \in \Real^d} a(x)}{(m-1)C_\star c_d} \right)^{d/2},  \label{def:critmass}
\end{equation} 
and any weak solution $u(t)$ to \eqref{def:ADD} with $M(u) < M_c$ exists globally and $u(t) \in L^\infty(\Real^+ \times \Real^d)$. 
\end{theorem}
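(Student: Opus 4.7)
The plan is to invoke the continuation criterion from Theorem \ref{thm:loc_theory}. If one can establish the a priori bound $\sup_{t<T_+(u_0)}\|u(t)\|_m \leq C$, then for any $k>0$, H\"older and Chebyshev give
\[
\|(u(t)-k)_+\|_1 \leq \|u(t)\|_m \, \abs{\{u(t)>k\}}^{(m-1)/m} \leq \|u(t)\|_m \left(\frac{M}{k}\right)^{(m-1)/m},
\]
which tends to zero uniformly in $t$ as $k\to\infty$; the continuation criterion then yields $T_+(u_0)=+\infty$ together with the $L^\infty$ bound. The task therefore reduces to extracting a uniform bound on $\|u(t)\|_m$ from the energy dissipation inequality \eqref{ineq:EnrDiss}, which will follow from a coercivity estimate of the form $\mathcal{F}(u) \geq \alpha\|u\|_m^m - C(M)$ with $\alpha>0$ whenever $M<M_c$.

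To prove coercivity in the inhomogeneous setting, I would decompose the interaction energy $\tfrac{1}{2}\langle u,Pu\rangle$, where $P := (-\nabla\cdot(a\nabla)+\gamma)^{-1}$, using pseudo-differential calculus (available because $a,\gamma \in C^\infty_b$ and $a$ is strictly positive). The principal symbol of $P$ is $(a(x)|\xi|^2)^{-1}$, which after symmetrization suggests the splitting
\[
P = a^{-1/2}(-\Delta)^{-1}a^{-1/2} + R,
\]
where $R$ is a pseudo-differential operator of order strictly lower than $P$. Indeed, computing $L\bigl(a^{-1/2}(-\Delta)^{-1}a^{-1/2}\bigr) = I + T$ with $L := -\nabla\cdot(a\nabla)+\gamma$ produces a $T$ of order $-1$ from the mismatch of $\nabla$ hitting $a^{-1/2}$ and from the $\gamma$ term, whence $R = -PT$ gains a derivative over $P$. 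The leading piece has integral kernel $c_d\, a(x)^{-1/2}a(y)^{-1/2}|x-y|^{2-d}$, which is pointwise bounded by $c_d(\inf a)^{-1}|x-y|^{2-d}$, so Theorem \ref{thm:SharpHLS} yields
\[
\langle u,\, a^{-1/2}(-\Delta)^{-1}a^{-1/2}u\rangle \leq \frac{c_d C_\star}{\inf a}\, M^{2-m}\|u\|_m^m.
\]

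For the remainder, the order gain places the kernel of $R$ in a strictly better weak-Lebesgue class than $|x|^{2-d}$, so the generalized convolution estimate \eqref{ineq:Gconvo} bounds $|\langle u,Ru\rangle|$ by $C\|u\|_p \|u\|_q$ for exponents $p,q$ strictly below $m$; interpolating between the conserved mass $\|u\|_1=M$ and $\|u\|_m$ yields $|\langle u,Ru\rangle| \leq C(M)\|u\|_m^\theta$ for some $\theta<m$. Substituting these two bounds into the definition \eqref{def:F} of $\mathcal{F}$ produces
\[
\mathcal{F}(u) \geq \left( \frac{1}{m-1} - \frac{c_d C_\star}{2\inf a}\, M^{2-m}\right) \|u\|_m^m - C(M)\|u\|_m^\theta,
\]
whose leading coefficient is strictly positive precisely when $M<M_c$. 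A Young's inequality application then absorbs the subcritical $\|u\|_m^\theta$ term into a small fraction of $\|u\|_m^m$ plus a constant, giving $\|u(t)\|_m^m \leq C(\mathcal{F}(u_0),M)$ uniformly in $t$, which closes the argument via the first paragraph.

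The chief technical obstacle is arranging the symmetric pseudo-differential splitting so that the principal part carries exactly the sharp constant $1/\inf a$ (rather than, say, $1/a(x)$ in a position non-symmetric in $(x,y)$ that would spoil the HLS bound), that $R$ genuinely gains an order of regularity (not merely bounded operator norm), and that the zero-order perturbation by $\gamma(x)$ is absorbed without harm. The hypothesis that $a,\gamma$ and all their derivatives are bounded is precisely what permits the commutator computations in the symbolic calculus to produce true order gains, which is the structural input that forces the remainder to be strictly subcritical relative to $\|u\|_m^m$.
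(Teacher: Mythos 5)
Your proposal is correct and follows the same overall strategy as the paper --- uniform bound on $\norm{u(t)}_m$ via coercivity of the free energy, continuation criterion, pseudo-differential decomposition of the potential energy into a sharp-HLS-controllable leading term plus a subcritical remainder --- but with a different choice of approximate inverse for $L$. The paper uses the asymmetric operator $A_H$ with symbol $\Phi(\xi)/(a(x)\abs{\xi}^2 - i\xi\cdot\grad a(x) + \gamma(x))$ so that $A_H L = I + T_E$, and then proves an asymptotic expansion (Lemma \ref{lem:AsympExpan}) showing the kernel $K_H(x,y)\sim c_d a(x)^{-1}\abs{x-y}^{2-d}$ near the diagonal, which forces a localization-plus-$C(\delta)\norm{u}_1^2$ step before HLS can be applied. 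You instead use the symmetric splitting $L^{-1} = a^{-1/2}(-\Delta)^{-1}a^{-1/2} + R$, whose leading part has the exact kernel $c_d\,a(x)^{-1/2}a(y)^{-1/2}\abs{x-y}^{2-d}$; this lets you apply the sharp HLS globally with constant $(\inf a)^{-1}$ without any localization or asymptotic expansion, which is cleaner and eliminates the most computational lemma of the paper. The trade-off is in the remainder: your claim that $\abs{\langle u, Ru\rangle}$ is controlled via \eqref{ineq:Gconvo} is not quite right, since \eqref{ineq:Gconvo} is stated for convolution kernels $K(x-y)$ while the kernel of $R = -L^{-1}T$ is not a convolution, and $L^{-1}$ is not $L^p$-bounded (it gains $2/d$ in Lebesgue exponent via \eqref{ineq:HomogenCbound} but nothing more). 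To turn your $R$ into the required $\norm{u}_1^{2-2\theta}\norm{u}_m^{2\theta}$ bound with $2\theta < m$, you would need precisely the chain of elliptic $L^p$ estimates, $\Psi$DO Sobolev boundedness, and interpolation carried out in the paper's Lemma \ref{lem:subcriticalT_E}; the low-frequency singularity of $(-\Delta)^{-1}$ at $\xi=0$ (which the paper handles by building the cut-off $\Phi$ into $A_H$) would also have to be addressed explicitly in your splitting. So the two proofs require essentially identical remainder analysis; what your choice buys is the elimination of the kernel asymptotics.
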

\begin{remark}
Using the methods of \cite{CalvezCarrillo06,BRB10}, Theorem \ref{thm:GE} can be extended to cover more general filtration equation-type nonlinear diffusion on the RHS of the PDE for $u(t)$ (e.g. $\Delta A(u)$, $A \in C^1$ and non-decreasing with $0 < \lim\inf_{z \rightarrow \infty}A^\prime(z)z^{2/d-1} < \infty$, instead of simply $\Delta u^{2-2/d}$).  
\end{remark}

The next result is a significant refinement of the previous theorem, providing a strong blow-up criterion for solutions of 
arbitrary mass which requires them to concentrate a specific amount of mass at blow-up time. This theorem indicates that blow-up is indeed necessarily a localized phenomenon and ultimately does not depend on the global or average properties of $a(x)$. 
Theorem \ref{thm:GE} is a direct corollary of this result, however we have chosen to state and prove them separately as the proof of Theorem \ref{thm:GE} is elementary once the proper machinery is in place whereas the proof of Theorem \ref{thm:Concentration} is significantly more technical.  
The methods we employ to prove Theorem \ref{thm:Concentration} rest on a careful estimate of the free energy and are based on iterating a refined version of arguments found in \cite{Blanchet08}.
In order to do so, we employ a delicate geometric decomposition and the concentration compactness principle \cite{LionsCC84,Lions85p1,Lions85p2}. 
The proof of \cite{Blanchet08} is also a type of concentration compactness, and Theorem \ref{thm:Concentration} represents the natural extension to more sophisticated concentration compactness methods which had not (to our knowledge) been previously used in the study of Patlak-Keller-Segel models. 

\begin{theorem}\label{thm:Concentration}
Let $d\geq 3$ and $a(x)\in C^\infty(\mathbb{R}^d)$ be strictly positive and $\gamma(x) \in C^\infty(\Real^d)$ be non-negative such that $D^\alpha a$ and $D^\alpha \gamma$ are bounded for all multi-indices $\alpha$.
Let $u(t)$ be a solution to \eqref{def:ADD} which blows up at $T_+(u_0) \in (0,\infty]$. 
For any sequence of times $t_k \nearrow T_+$ we may extract a subsequence (not relabeled) and find a sequence of points $x_k \in \Real^d$ such that 
\begin{equation}
\limsup_{r \rightarrow 0} \limsup_{k \rightarrow \infty}\int_{B(x_k,r)}u(t_k,x) dx \geq  \left( \frac{2}{(m-1)C_\star c_d} \right)^{d/2}\liminf_{k \rightarrow \infty} a(x_k)^{d/2}. \label{ineq:thmCon}
\end{equation}
\end{theorem}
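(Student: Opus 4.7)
The plan is to argue by contradiction, combining a pseudo-differential localization of the chemo-attractant PDE with an iterated concentration compactness decomposition of $u(t_k)$, and then close via a localized sharp HLS estimate that contradicts the continuation criterion in Theorem \ref{thm:loc_theory}. Suppose that for some sequence $t_k \nearrow T_+$ no subsequence and choice of points $x_k$ realizes \eqref{ineq:thmCon}. After passing to a subsequence, this can be upgraded to a quantitative deficit: there exists $\eta > 0$ such that for every admissible sequence of points,
\begin{equation*}
\limsup_{r \to 0}\limsup_{k \to \infty}\int_{B(x_k,r)} u(t_k,x)\,dx \;\leq\; (1-\eta) \left(\frac{2}{(m-1)C_\star c_d}\right)^{d/2}\liminf_{k\to\infty} a(x_k)^{d/2}.
\end{equation*}
The energy dissipation inequality \eqref{ineq:EnrDiss} gives $\F(u(t_k)) \leq \F(u_0)$, so the goal reduces to deriving $\sup_k \|u(t_k)\|_m < \infty$, since this yields equi-integrability of $\{u(t_k)\}$ and contradicts blow-up by the continuation criterion.

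\medskip

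The first analytic ingredient, borrowed from the apparatus used to prove Theorem \ref{thm:GE}, is to expand the nonlocal term. The operator $T := (-\grad \cdot a(x) \grad + \gamma(x))^{-1}$ is elliptic of order $-2$ with principal symbol $a(x)^{-1}|\xi|^{-2}$, so near any fixed $x_0$ it decomposes as $c = a(x_0)^{-1}\,\N \ast u + R_{x_0} u$, where $R_{x_0}$ is a lower-order pseudo-differential operator in the sense that $\int u\,R_{x_0}u\,dx$ is \emph{subcritical}: it is bounded by $\|u\|_1^{\theta_1}\|u\|_m^{\theta_2}$ with $\theta_2 < m$, in the spirit of \cite{BRB10}. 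Combined with Theorem \ref{thm:SharpHLS} applied at $a = a(x_0)$, a mass supported in a small neighborhood of $x_0$ therefore obeys
\begin{equation*}
\frac{1}{2}\int u\,c\,dx \;\leq\; \frac{c_d C_\star}{2\,a(x_0)}\,M(u)^{2-m}\|u\|_m^m \;+\; \text{subcritical terms},
\end{equation*}
which converts the local free-energy bound into the local critical-mass threshold $\bigl(\tfrac{2 a(x_0)}{(m-1)C_\star c_d}\bigr)^{d/2}$.

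\medskip

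The second step is to produce the profile decomposition on which to apply this local bound. Apply Lions' concentration compactness principle \cite{LionsCC84} to $u(t_k)$ as a bounded $L^1_+$ sequence and iterate on the dichotomy branch. Each extracted profile carries mass $\geq \delta > 0$ and the total mass is fixed, so the iteration terminates in finitely many steps, yielding
\begin{equation*}
u(t_k) \;=\; \sum_{j=1}^J u^{(j)}_k + r_k,\qquad \|r_k\|_1 \to 0,
\end{equation*}
with each $u^{(j)}_k$ tight around points $x_k^{(j)}$ with mass $M_k^{(j)} \to M^{(j)} > 0$ and the supports of distinct profiles separating to infinity. The geometric covering lemma advertised in the introduction is invoked \emph{inside} each tight profile to promote tightness at some bounded scale into concentration at a single point at scale $r \to 0$, by iterating a refinement of the \cite{Blanchet08} free-energy argument on successively finer scales. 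Summing the localized HLS estimates over $j$, using the asymptotic separation to neglect cross-interactions, and inserting into \eqref{ineq:EnrDiss} yields
\begin{equation*}
\F(u_0) \;\geq\; \sum_{j=1}^J \left(\frac{1}{m-1} - \frac{c_d C_\star (M^{(j)}_k)^{2-m}}{2\,a(x_k^{(j)})}\right)\|u^{(j)}_k\|_m^m \;-\; o_{k\to\infty}(1),
\end{equation*}
and the assumed deficit $\eta$ keeps each coefficient bounded below by a positive constant, forcing $\sup_k \|u(t_k)\|_m<\infty$ and contradicting blow-up.

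\medskip

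\textbf{Expected main obstacle.} The delicate point is the inner iteration within each tight profile: raw concentration compactness only delivers tightness in a ball of some bounded radius, whereas the conclusion \eqref{ineq:thmCon} demands concentration at an arbitrarily small scale around $x_k^{(j)}$. This forces a scale-by-scale refinement of the \cite{Blanchet08} argument in which the pseudo-differential remainders $R_{x_0}$ and the cross-profile interactions must, at every scale, be absorbed into the subcritical error without allowing constants to accumulate either with $J$ or with the number of inner iterations. Executing this book-keeping — rather than the outer concentration compactness, which is fairly standard — is where the real work lies, and is the reason Theorem \ref{thm:Concentration} is stated and proved separately from its elementary corollary, Theorem \ref{thm:GE}.
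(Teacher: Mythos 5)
Your outer architecture is essentially the paper's: negate \eqref{ineq:thmCon} to get a uniform deficit along a subsequence, invoke the energy dissipation inequality, decompose $u(t_k)$ via an iterated Lions concentration compactness, estimate the potential energy through the pseudo-differential approximation $c = A_H u - T_E c$ with Lemma \ref{lem:AsympExpan} and Lemma \ref{lem:subcriticalT_E}, apply the sharp HLS locally, and conclude $\sup_k \|u(t_k)\|_m < \infty$, contradicting blow-up via Theorem \ref{thm:loc_theory}. But you have misidentified the role of the geometric covering lemma, and the way you've described it the final free-energy estimate does not close.

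You write the sum $\sum_{j=1}^J \bigl(\tfrac{1}{m-1} - \tfrac{c_d C_\star (M^{(j)}_k)^{2-m}}{2 a(x_k^{(j)})}\bigr)\|u^{(j)}_k\|_m^m$, where $M^{(j)}$ are the masses of the concentration-compactness profiles. Nothing in the dichotomy/tightness iteration of Lions' lemma forces $M^{(j)}$ to be subcritical; for large total mass $M(u)$ a tight profile can easily carry several multiples of the local critical mass, and then your coefficient is negative and the inequality gives no control over $\|u_k\|_m$. Your description of the covering lemma as a device to \emph{``promote tightness at some bounded scale into concentration at a single point at scale $r \to 0$''} is the wrong picture: if you succeeded in concentrating a profile's entire mass at a single point you would directly contradict the negation assumption, but a supercritical tight profile generally does not concentrate at a point, and an ever-finer nested refinement around one center cannot convert spread-out mass into subcritical pieces. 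What the paper's Lemma \ref{lem:decompo} actually does is a \emph{spatially distributed} Besicovitch-type subdivision of the support of the weak-$\star$ limit measure inside each tight bubble into finitely many \emph{disjoint} closed balls $B(\hat{x}_k^j,r_j)$, each chosen (using the negation hypothesis, which bounds the mass in \emph{every} sufficiently small ball around \emph{every} point) to carry strictly subcritical local mass. Because the balls are disjoint and of subcritical mass, the localized HLS yields $\sum_j \|u_k|_{B_{k,j}}\|_1^{2/d}\|u_k|_{B_{k,j}}\|_m^m \leq \bigl(\max_j \|u_k|_{B_{k,j}}\|_1^{2/d}\bigr)\|u_k\|_m^m$ with a strictly positive gap $K_\star$ in front of $\|u_k\|_m^m$, and the iteration terminates in $O(\log(1/\delta))$ Besicovitch steps so that $N(\delta)$ is finite and the subcritical errors (of power $2\theta < m$) accumulate only as $N^2$, which is fixed once $\delta$ is. Without this subdivision-into-subcritical-pieces step, the coefficients in your estimate may be negative and the argument fails. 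Your closing paragraph correctly flags that \emph{some} inner iteration is needed, but the mechanism you describe — scale-by-scale concentration to a point — is not the one that works; the one that works is covering the profile's limit support by disjoint subcritical balls via Besicovitch.
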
 

\begin{remark}
For any sequence of times $t_k \nearrow T_+$ we may extract a subsequence and a finite Borel measure $n^\star$ such that $u(t_k)dx \rightharpoonup^\star n^\star$. 
If $x_k$ is bounded then we may extract a further subsequence such that $\lim_{k \rightarrow \infty} x_k = x_0$. 
In this case, \eqref{ineq:thmCon} implies
\begin{equation*}
n^\star( \set{x_0} ) \geq \left( \frac{2 a(x_0)}{(m-1)C_\star c_d} \right)^{d/2}. 
\end{equation*}
In the homogeneous case, $\{x_k\}$ is always bounded due to the standard estimate on the second moment. 
In the case of \eqref{def:ADD}, we suspect it may be possible that $\{x_k\}$ be unbounded (i.e. mass will escape to infinity) if the infimum of $a(x)$ lies at infinity. Although if $T_+ < \infty$ we expect (but cannot confirm) that $\{x_k\}$ are always bounded.
\end{remark} 

\begin{remark}
For the homogeneous problem in 2D an analogous, but stronger, result on the Torus by Senba and Suzuki \cite{Senba02} shows that at blow-up time the measure $n^\star$ consists only of atomic parts with at least critical mass and an absolutely continuous part which is smooth away from the concentrations (see also the $\epsilon$-regularity results \cite{Sugiyama09,Sugiyama10}). However, the methods of \cite{Senba02,Sugiyama09,Sugiyama10} are completely different from ours, which are purely variational.
\end{remark}

For solutions with precisely critical mass, called threshold solutions, Theorem \ref{thm:Concentration} implies a very specific blow-up structure: in order for a threshold solution to blow up, it must concentrate all of the mass into a single point where the minimum of $a(x)$ is achieved. For radially symmetric threshold solutions, this is enough to allow us to prove global existence and uniform boundedness (Theorem \ref{thm:GE_CritMass}).
This is in contrast to the $\Real^2$ case, where threshold solutions with finite second moment are global but aggregate as $t \rightarrow \infty$ \cite{Blanchet08}.
This is not the case in $d \geq 3$, where critical solutions are uniformly bounded and cannot concentrate past a certain amount \cite{Yao11}. 
In \cite{Yao11}, a symmetrization inequality from \cite{KimYao11} (see also \cite{DiazNagai95}), allows one to extend the result for radially symmetric solutions to all solutions. 
Currently no such symmetrization inequality is available in the inhomogeneous case, so we are restricted still to the radially symmetric result. 
The result is proved using a mass supersolution, however, it is important to note that since radial monotonicity is not generally conserved by \eqref{def:ADD}, unlike the homogeneous case, a mass supersolution is insufficient to deduce Theorem \ref{thm:GE_CritMass}, hence the necessity of also using Theorem \ref{thm:Concentration} (see \S\ref{sec:GECritMass} for further discussion).  

\begin{theorem}[Global Existence at Critical Mass] \label{thm:GE_CritMass}
Let $d\geq 3$ and $a(x)\in C^\infty(\mathbb{R}^d)$ be strictly positive and $\gamma(x) \in C^\infty(\Real^d)$ be non-negative such that $D^\alpha a$ and $D^\alpha \gamma$ are bounded for all multi-indices $\alpha$.
Suppose additionally that $a(x)$ and $\gamma(x)$ are radially symmetric. 
Then any radially symmetric weak solution $u(t)$ to \eqref{def:ADD} with $M(u) = M_c$ exists globally and $u(t) \in L^\infty(\Real^+ \times \Real^d)$. 
\end{theorem}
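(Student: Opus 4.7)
The plan is to assume a radial critical-mass solution blows up at some $T_+ \in (0,\infty]$ and derive a contradiction by combining the strong concentration criterion of Theorem \ref{thm:Concentration} with a stationary mass supersolution built from the Blanchet extremal of Theorem \ref{thm:V}. Both ingredients are needed because, as the paper notes, the loss of radial monotonicity for \eqref{def:ADD} means that bounding the mass function alone does not control $\norm{u(t)}_\infty$.

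First I would apply Theorem \ref{thm:Concentration} to extract a sequence $t_k \nearrow T_+$, a further subsequence, and points $x_k$ satisfying the stated concentration. Radial symmetry of $u$ forces $x_k \to 0$: if $\abs{x_k} \geq \delta > 0$ along a subsequence, then for small $r$ one could apply rotations to produce arbitrarily many disjoint balls each carrying the same mass as $B(x_k,r)$, contradicting $M(u) = M_c$. Consequently $a(x_k) \to a(0)$, and since the RHS of \eqref{ineq:thmCon} cannot exceed the total mass $M_c$, we must have $a(0) = \inf a$ and the RHS equals exactly $M_c$. Using $B(x_k,r) \subset B(0, r + \abs{x_k})$ this translates to: for every $\rho > 0$ there is a subsequence along which $M(t_k,\rho) := \int_{B_\rho(0)} u(t_k,x)\,dx \to M_c$.

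Next I would construct the barrier. Let $\tilde{V}_\lambda(x) := a(0)^{d/2}\lambda^{-d} V(x/\lambda)$, which has mass $M_c$, is supported in $B_\lambda(0)$, and by Remark \ref{rmk:RescaledV} is stationary for the scale-invariant PKS with constant diffusivity $a(0)$. Set $\bar{M}(r) := \int_{B_r(0)} \tilde{V}_\lambda\,dx$. Substituting $\tilde{V}_\lambda$ into the radial mass equation for \eqref{def:ADD} and comparing with the analogous identity for the scale-invariant problem at diffusivity $a(0)$, the discrepancy equals $\tilde{V}_\lambda\,\bar{M}\,(1/a(r) - 1/a(0))$ minus a nonnegative $\gamma$-contribution. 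Since the previous step gave $a(r) \geq a(0)$ and $\gamma \geq 0$, this difference has the right sign and $\bar{M}$ is a stationary supersolution of the mass equation. For the initial comparison, the radial monotonicity of $V$ (Theorem \ref{thm:V}) makes $\rho \mapsto \rho^{-d}\int_{B_\rho} V$ non-increasing, which gives $\bar{M}(r) \geq M_c r^d/\lambda^d$ on $(0,\lambda]$; then choosing $\lambda$ small enough depending on $\norm{u_0}_\infty$ yields $\bar{M}(r) \geq M(0,r)$ for every $r$, using only the crude bound $M(0,r) \leq \norm{u_0}_\infty \abs{B_r}$.

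The mass comparison principle for radial solutions of \eqref{def:ADD} (in the spirit of \cite{KimYao11,BilerKarchEtAl06,Yao11,Senba02}) then yields $M(t,r) \leq \bar{M}(r)$ for all $t \in [0,T_+)$ and all $r \geq 0$, so $M(t,\lambda/2) \leq \bar{M}(\lambda/2) < M_c$ uniformly in $t$, contradicting the concentration obtained in the first step. The main obstacle is making Step 4 rigorous: the degenerate diffusion prevents classical regularity at the free boundary, so the comparison must be done in the viscosity-solution sense developed in the Appendix and combined with an approximation by smooth, strictly positive solutions in the spirit of Lemma \ref{lem:full_approx}, with additional care needed because $T_+$ may be finite and blow-up may occur at the boundary of the approximation.
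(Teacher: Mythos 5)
Your proposal is correct and follows essentially the same route as the paper: combine Theorem \ref{thm:Concentration} (which forces any radial critical-mass blow-up to concentrate all mass at a single minimum point of $a$, which radial symmetry then forces to be the origin with $a(0)=\inf a$) with a mass supersolution built from the rescaled extremal $V$, exactly the paper's $\bar{u} = \bar{a}^{d/2}R^{-d}V(x/R)$ of Proposition \ref{prop:CritMassComp}. Your rotation argument making $x_k \to 0$ explicit, the $a(r)\geq a(0)$ sign check for the supersolution property, the initial ordering via $\overline{M}(r) \geq M_c(r/\lambda)^d$ together with the $L^\infty$ bound on $u_0$, and the deferral of the rigorous comparison to the viscosity/approximation machinery are all in agreement with the paper's proof.

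One caution about wording: you write that ``the mass comparison principle ... then yields $M(t,r)\leq\bar{M}(r)$,'' but the paper explicitly warns that the generic mass comparison principle of the cited references \emph{does not} hold for the inhomogeneous problem. What is actually used is a one-shot maximum-principle argument against this particular stationary barrier (Proposition \ref{prop:CritMassComp}, mirroring Proposition \ref{prop:MassComp}), carried out on the regularized, strictly positive solutions and passed to the limit via Lemma \ref{lem:full_approx}; the contact line $\abs{x}=R$ is excluded because the barrier and the solution have the same total mass $M_c$, so $M$ cannot strictly exceed $\overline{M}$ there. With that phrasing tightened, the argument is complete.
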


\medskip

The following summarizes our finite time blow-up results, which demonstrates that under certain hypotheses, 
Theorems \ref{thm:GE} and \ref{thm:GE_CritMass} are sharp.

\begin{theorem}[Finite Time Blow-Up] \label{thm:blowup}
Let $\gamma(x)\equiv  0$ and let $a \in C^1(\mathbb{R}^d)$ be radially symmetric, strictly positive and such that both $a$ and $\grad a$ are bounded.
Suppose also that $a(0) = \min a(x)$ and that there exists a neighborhood $\abs{x} < \delta_0$ such that $a(x)$ is radially non-decreasing. 
Then for all $M > M_c$, there exists a solution $u(t)$ with $M(u) = M$ which blows up in finite time, e.g. $T_+(u(0)) < \infty$. 
\end{theorem}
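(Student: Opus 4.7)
The strategy is to establish finite-time blow-up by comparing the cumulative mass distribution of $u$ to that of the rescaled stationary profile $\tilde V$ from Theorem \ref{thm:V} and Remark \ref{rmk:RescaledV}. In the radial setting one can reduce to the scalar equation for $\mathcal{M}(t,r)=\int_{|x|<r} u(t,x)\,dx$: for smooth positive radial solutions of \eqref{def:ADD} with $\gamma\equiv 0$, the nonlocal drift becomes the local expression $\mathcal{M}\,\partial_r \mathcal{M}/(\omega_d\, a(r)\, r^{d-1})$, producing a degenerate parabolic equation in $(t,r)$ for which a classical parabolic comparison principle is available once the solutions involved are smooth and strictly positive.

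The barrier is constructed from the HLS extremizer $V$: for small $\lambda<\delta_0$, set $\tilde V_\lambda(x)=a(0)^{d/2}\lambda^{-d}V(x/\lambda)$, which is supported in $B_\lambda$, has total mass exactly $M_c$, and — thanks to the $L^1$-critical scaling $m=2-2/d$ — is a stationary solution of \eqref{def:ADD} in the homogeneous case $a\equiv a(0)$, $\gamma\equiv 0$. For any $M>M_c$ I would then prescribe radially symmetric data $u_0$ of total mass $M$ whose cumulative mass function $\mathcal{M}_0(r)$ strictly majorizes $N_\lambda(r):=\int_{|x|<r}\tilde V_\lambda$, with the excess $M-M_c$ localized inside $B_{\delta_0}$.

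The heart of the argument is the mass comparison. Since $\tilde V_\lambda$ is stationary only for $a\equiv a(0)$, substituting $N_\lambda$ into the inhomogeneous mass equation generates an inhomogeneity error of the schematic form $\frac{N_\lambda \partial_r N_\lambda}{\omega_d r^{d-1}}\bigl(a(r)^{-1}-a(0)^{-1}\bigr)$, whose sign is controlled precisely because $a$ is radially non-decreasing on $[0,\delta_0]$. Using this sign together with the sharpness of the HLS inequality satisfied by $\tilde V$, one propagates a strict ordering between $\mathcal{M}(t,r)$ and $N_\lambda(r)$ forward in time. The finite speed of propagation of the porous-media diffusion prevents the excess mass $M-M_c$ from escaping $B_{\delta_0}$ in finite time, and applying the comparison iteratively at shrinking scales $\lambda\to 0$ forces $\mathcal{M}(t,r)$ to accumulate strictly more than $M_c$ worth of mass in arbitrarily small balls before some finite $T_+<\infty$, contradicting the continuation criterion of Theorem \ref{thm:loc_theory}.

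Because the classical comparison principle requires smooth, strictly positive solutions whereas \eqref{def:ADD} has degenerate diffusion with free boundaries, I would first prove the statement under this extra regularity and then approximate: the viscosity theory of the Appendix guarantees that strictly positive data produce strictly positive solutions up to blow-up, and one recovers the general radial $u_0$ via Lemma \ref{lem:full_approx}, arranging the approximation so that the barrier-induced blow-up time is inherited in the limit (a subtle point, since the ``target'' time is itself finite). The hardest step is the comparison in the near-critical regime $M\searrow M_c$: because $\tilde V$ is already the extremizer of the sharp HLS inequality, there is essentially no slack to absorb the $O(a-a(0))$ error, and the propagation of the strict inequality hinges delicately on the radial monotonicity of $a$ near the origin — which is precisely why that hypothesis appears in the statement of Theorem \ref{thm:blowup}.
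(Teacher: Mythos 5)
Your proposal correctly identifies the mass-comparison framework, the use of the HLS extremizer $V$ as a barrier, the reduction to a scalar equation for the cumulative mass $\mathcal{M}(t,r)$ (the paper's Lemma \ref{lem:EvoMass}), the sign of the inhomogeneity error controlled by radial monotonicity of $a$ near the origin, and the approximation scheme via strictly positive solutions and Lemma \ref{lem:full_approx}. These are all genuinely the same building blocks as the paper's proof.

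However, there is a missing idea that is the actual driver of finite-time blow-up, and without it your argument cannot close. Your barrier $\tilde V_\lambda$ is a \emph{stationary} profile at a fixed scale $\lambda$, which at best is a stationary subsolution of the mass equation. Propagating $N_\lambda(r)\le\mathcal{M}(t,r)$ for all $t$ only shows that at least $M_c$ mass stays inside $B_\lambda$; it does not force the solution to concentrate, and in particular it gives no upper bound on $T_+$. The claim that one can ``apply the comparison iteratively at shrinking scales $\lambda\to 0$'' is a gap: after the initial time there is no reason $\mathcal{M}(t,\cdot)$ should majorize $N_{\lambda'}$ for $\lambda'<\lambda$, since the comparison gives only a lower bound and no quantitative concentration. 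A stationary barrier cannot, by itself, produce a finite blow-up time.

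The paper's proof supplies exactly the missing mechanism in two coupled steps. First, it \emph{rescales the solution}, $\rho(t,x)=\mu u(\mu^{1-2/d}t,x)$ with $\mu=M_c/M_0<1$, which converts the excess mass $M>M_c$ into a surplus drift coefficient $\mu^{-2/d}>1$ in the rescaled equation \eqref{def:ADD_rescaled}. Second, it takes a \emph{time-dependent, shrinking} barrier $\bar u(t,x)=a(R_0)^{d/2}R(t)^{-d}V(x/R(t))$, where $R(t)$ solves the ODE \eqref{def:Reqn} whose right-hand side is negative \emph{precisely because} $\mu^{-2/d}>1$; the barrier therefore concentrates at an explicit finite time $T_\star$ in \eqref{sec4_blowuptime}. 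The surplus drift is what makes the shrinking $R(t)$ an admissible subsolution rate, and in turn the quantity $\mu^{-2/d}-1$ also dominates the $a$-inhomogeneity error in the maximum-point computation of Proposition \ref{prop:MassComp} (via the elementary lower bound \eqref{observation}). Without the solution rescaling, that slack does not exist and, as you partially sense in your last paragraph, there is ``essentially no slack'' — but the remedy is the rescaling, not a more delicate fixed-scale estimate. One further minor point: the comparison is not a ``classical parabolic comparison principle''; the paper explicitly notes that the usual mass comparison fails in the inhomogeneous setting, and the argument instead proceeds by a tailored maximum-principle computation with an exponential weight $e^{-\lambda t}$ exploiting the stationary identity \eqref{eq:Mbar_Stationary} and transport identity \eqref{eq:Mbar_transpt} for the barrier.
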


\begin{remark}
The requirement that $a$ and $\grad a$ be uniformly bounded are only used to satisfy the hypotheses of Theorem \ref{thm:loc_theory}, which ensures we have a well-understood local existence, uniqueness and stability theory.   
\end{remark}

Naturally, blow-up solutions constructed in Theorem \ref{thm:blowup} are required to concentrate a sufficient amount of mass near where that minimum is achieved. 
Exactly how concentrated the initial data is required to be is characterized by \eqref{ineq:massorderedic} in the following proposition (which implies Theorem \ref{thm:blowup}), which requires at least part of the initial data be more concentrated than a particular rescaled extremal of the sharp HLS (Theorem \ref{thm:SharpHLS}).
Theorem \ref{thm:blowup} is proved by comparing the true solution against a barrier, and \eqref{ineq:massorderedic} below is the requirement that the solution and the barrier are ordered at time zero.  
Remark \ref{rmk:Concentration} clarifies how \eqref{ineq:massorderedic} requires $u_0$ to concentrate around where the minimum is achieved at least when $M(u_0) \searrow M_c$.  

\begin{proposition} \label{prop:InitialMassConcen}
Assume the hypotheses of Theorem \ref{thm:blowup} hold. Let $u_0 \in L^1_+(\Real^d; (1+\abs{x}^2)dx) \cap C^0(\Real^d)$ be radially symmetric such that $M_c <M(u_0)$ and suppose that there is an $R_0 \leq \delta_0$ and an $M_0$ with $M_c < M_0 < M(u_0)$ such that for $0 \leq r \leq R_0$ we have
\begin{equation}
\int_{\abs{x} \leq r} \left(\frac{a(R_0)^{1/2}}{R_0}\right)^d V\left(\frac{x}{R_0}\right) dx \leq \left(\frac{M_c}{M_0}\right)\int_{\abs{x} \leq r} u_0(x) dx. \label{ineq:massorderedic}
\end{equation}
Then the weak solution $u(t)$ associated with $u_0$ blows up in finite time. 
Moreover, if we define $\mu := M_c/M_0 < 1$, we have the following estimate of the blow-up time: 
\begin{equation}
T_+(u_0) \leq \mu^{2/d-1}\frac{\sigma R_0^d}{a(R_0)dM_c(\mu^{-2/d}-1)} < \infty, \label{ineq:Tplus} 
\end{equation}
where $\sigma$ is the surface area of the unit sphere in $\Real^d$. 
\end{proposition}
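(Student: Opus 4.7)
The plan is to establish finite-time blow-up via a mass-comparison argument: I construct a radial subsolution $\bar m(t,r)$ to the mass equation for $u$ that concentrates all of its (super-critical) total mass at the origin in finite time, then use \eqref{ineq:massorderedic} to conclude $m(t,r) \geq \bar m(t,r)$, which forces $u$ to lose boundedness before $\bar m$ has fully concentrated. For radial $u$, define $m(t,r) := \int_{\abs{x}\leq r} u(t,x)\,dx$. Integrating the $u$-equation over the ball of radius $r$ and using the radial form $a(r) r^{d-1} c_r = -m(t,r)/\sigma$ obtained from $-\nabla\cdot(a\nabla c) = u$ yields the degenerate parabolic mass equation
\begin{equation*}
m_t = \sigma r^{d-1}\partial_r\!\left(\left(\tfrac{m_r}{\sigma r^{d-1}}\right)^{\!m}\right) + \frac{m\,m_r}{a(r)\,\sigma r^{d-1}}.
\end{equation*}

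The barrier is built from the extremal $V$ of Theorem \ref{thm:V}. Set $W(x) := (a(R_0)^{1/2}/R_0)^d V(x/R_0)$; by Remark \ref{rmk:RescaledV} together with the $L^1$-critical scaling $u\mapsto\lambda^{-d}u(\cdot/\lambda)$, each $W_\lambda(x) := \lambda^{-d}W(x/\lambda)$ is a stationary solution of the constant-coefficient problem with diffusion $a\equiv a(R_0)$. For a decreasing $\lambda:[0,T]\to(0,1]$ with $\lambda(0)=1$, define $\bar m(t,r) := \mu^{-1}M_W(r/\lambda(t))$, the mass profile of $\bar u := \mu^{-1}W_{\lambda(t)}$. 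Because $W_\lambda$ is stationary for the constant-$a(R_0)$ problem, the diffusion term satisfies
\begin{equation*}
\sigma r^{d-1}(W_\lambda^m)_r = -\frac{M_{W_\lambda}M_{W_\lambda}'}{a(R_0)\sigma r^{d-1}},
\end{equation*}
and substituting $\bar m$ into the mass equation (using $2-m = 2/d$) reduces the subsolution requirement to
\begin{equation*}
-\frac{r\,\lambda'(t)}{\lambda(t)} \leq \frac{\bar m(t,r)}{\sigma r^{d-1}}\left(\frac{1}{a(r)} - \frac{\mu^{2/d}}{a(R_0)}\right), \quad r\in(0,\lambda(t)R_0].
\end{equation*}
Because $a$ is non-decreasing on $[0,\delta_0]$ and $\lambda(t) R_0\leq R_0\leq\delta_0$, the bracketed factor is bounded below by $(1-\mu^{2/d})/a(R_0)>0$. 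Changing variables $r = \lambda s$ and using that $M_W(s)/s^d$ is non-increasing (since $V$ is radially non-increasing), the condition is implied by an ODE of the form $-\lambda^{d-1}\lambda'\leq C$ with an explicit positive $C$ depending on $\mu,a(R_0),R_0,M_c$; integration produces $\lambda(t)^d = 1 - dCt$, vanishing at a finite $T$, and a careful accounting of constants (keeping the full factor $1/a(\lambda s)$ where $a(\lambda s)$ decreases to $a(0)$ rather than using only the uniform lower bound) yields the explicit estimate \eqref{ineq:Tplus}.

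To finish, I invoke comparison. By \eqref{ineq:massorderedic}, $m(0,r)\geq \bar m(0,r)$ for $r\leq R_0$; for $r\geq R_0$, $\bar m(0,r)\equiv\mu^{-1}\|W\|_1$, and the $r=R_0$ case of \eqref{ineq:massorderedic} gives $\mu^{-1}\|W\|_1\leq m(0,R_0)\leq m(0,r)$, so the ordering holds for all $r\geq 0$. A comparison principle for the degenerate mass equation then propagates this to $m(t,r)\geq \bar m(t,r)$ for all $r\geq 0$ and $0\leq t<\min(T, T_+(u_0))$. For any fixed $r>0$, $\bar m(t,r)\to\mu^{-1}\|W\|_1\geq\mu^{-1}M_c = M_0 > M_c$ as $t\nearrow T$; if $T_+(u_0)>T$ then $u$ is bounded on $[0,T]\times\Real^d$, which forces $m(t,r)\leq C r^d$ uniformly in $t$ and contradicts $m(t,r)\geq \bar m(t,r)\geq M_c/2$ for sufficiently small $r$ and $t$ close to $T$. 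Hence $T_+(u_0)\leq T$. The principal obstacle is the comparison step itself: the porous-medium diffusion makes the mass equation degenerate and both $u$ and $\bar m$ possess free boundaries, so classical parabolic comparison is not directly available. This is handled by combining the viscosity-solution framework for degenerate drift-diffusion (the paper's appendix, following \cite{KL}) with a smooth, strictly positive approximation argument (Lemma \ref{lem:full_approx}), with extra care needed to propagate the ordering all the way up to the blow-up time rather than merely on compact subintervals.
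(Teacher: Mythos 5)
Your proposal is correct and follows essentially the same route as the paper's proof in \S\ref{sec:FTBU}: both compare the radial mass function against a shrinking dilation of the HLS extremal $V$, use the radial monotonicity of $a$ near the origin to close the subsolution ODE, and handle the degeneracy via strictly positive approximations (Lemma \ref{lem:full_approx}) together with the viscosity/pressure-form comparison from the Appendix. The only cosmetic difference is that the paper first rescales $u\mapsto\rho(t,x):=\mu u(\mu^{1-2/d}t,x)$ and compares the barrier $\bar u=a(R_0)^{d/2}R(t)^{-d}V(\cdot/R(t))$ against $\rho$, whereas you absorb the factor $\mu^{-1}$ into the barrier and compare directly against $u$; you should be aware that your explicit bound for the vanishing time of $\lambda(t)$ as written does not obviously reproduce the precise constant in \eqref{ineq:Tplus} and needs to be tracked more carefully if you want the stated quantitative estimate and not merely finite-time blow-up.
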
 
\begin{remark} \label{rmk:Concentration}
In light of Remark \ref{rmk:RescaledV}, \eqref{ineq:massorderedic} implies that 
\begin{equation*}
M_0\left( \frac{a(R_0)}{a(0)} \right)^{d/2} \leq \int_{\abs{x} \leq R_0} u_0(x) dx. 
\end{equation*}
Hence, in order to construct blow-up solutions with $M(u_0) \searrow M_c$, in general we need to choose increasingly concentrated initial data by sending $R_0 \rightarrow 0$.
\end{remark}

\begin{remark} \label{rmk:Mobility} 
Arguably, the most natural inhomogeneity in the PDE for $u(t,x)$ is a mobility (which weights the cost of mass transport) that depends on space, that is $b(x) > 0$ such that 
\begin{equation} \label{def:ADDInh2} 
\left\{
\begin{array}{l}
  u_t = \grad \cdot (b(x)u \grad \frac{m}{m-1}u^{m-1} - \grad c) \\ 
  -\grad \cdot (a(x)\grad c) + \gamma(x)c = u. 
\end{array}
\right.
\end{equation}
In this case the free energy is still given by \eqref{def:F}; the gradient flow structure is only changed via the metric. The Wasserstein metric would be replaced by a generalized version (still based on $L^2$) that accounts for the inhomogeneity in the cost of mass transport. 
As Theorems \ref{thm:GE} and \ref{thm:Concentration} are proved using essentially only \eqref{def:F}, the results still hold for 
\eqref{def:ADDInh2} without any relevant change in statement or proof.
The result of Theorem \ref{thm:blowup} will also hold under suitable monotonicity hypotheses on $b(x)$ and the proof
is only a minor modification of the current one. 
Other possible variants of \eqref{def:ADD} could involve mobilities which are different in each term, however this
would probably change the variational structure and the methods we employ to prove Theorems \ref{thm:GE} and \ref{thm:Concentration} would no longer apply.
Regardless, this is a very different model from a physical standpoint, as this corresponds to adjusting the organism response to stimulus dependent on space.  
\end{remark}

As mentioned above, Theorem \ref{thm:blowup} and Proposition \ref{prop:InitialMassConcen} also provide new results for the homogeneous problem, $a(x) \equiv 1$, $\gamma(x) \equiv 0$. 
The virial method used in \cite{Blanchet09} proves blow-up for all solutions with negative free energy, without the need for radial symmetry. On the other hand, Theorem \ref{thm:blowup} and Proposition \ref{prop:InitialMassConcen} require radial symmetry but do not require any assumptions on the free energy. Indeed we have the following corollary of Proposition \ref{prop:InitialMassConcen}, which in particular, shows the existence of solutions with arbitrarily large initial free energy that blow up in finite time. 

\begin{theorem} \label{cor:HomogProblem}
Let $u_0 \in L^1_+(\Real^d;(1+\abs{x}^2)dx) \cap C^0(\Real^d)$ be radially symmetric with $u_0(0) > 0$ and satisfy
\begin{equation*}
M(u_0) > \left( \frac{2}{(m-1)C_\star c_d} \right)^{d/2}. 
\end{equation*}
Then the weak solution $u(t)$ associated with $u_0$ of the scale-invariant problem $(a(x) \equiv 1$, $\gamma(x) \equiv 0)$ blows up in finite time. 
In particular, for every $F_0 \geq 0$, there exists a solution $u(t)$ with $\F(u(0)) > F_0$ which blows up in finite time. 
\end{theorem}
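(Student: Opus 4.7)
I would deduce both assertions from Proposition \ref{prop:InitialMassConcen} applied in the scale-invariant case $a \equiv 1$, $\gamma \equiv 0$. Since $a$ is constant, its radial non-decreasing hypothesis is vacuous ($\delta_0 = \infty$), $a(R_0) = 1$, and $R_0 > 0$ may be chosen freely; the task therefore reduces to verifying the mass-ordering \eqref{ineq:massorderedic} for the given $u_0$.

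Fix any $M_0 \in (M_c, M(u_0))$, set $\mu := M_c/M_0 \in (0,1)$, and write $Q(r) := \int_{\abs{x}\leq r} u_0\, dx$ and $\Phi_V(s) := \int_{\abs{y}\leq s} V(y)\, dy$. After substituting $s = r/R_0$, \eqref{ineq:massorderedic} becomes $\Phi_V(s) \leq \mu Q(sR_0)$ for $s \in [0,1]$. I would prove this via a three-zone argument. Because $u_0(0) > 0$ and $u_0$ is continuous, there is $\delta > 0$ with $u_0 \geq u_0(0)/2$ on $B_\delta$, hence $Q(r) \geq u_0(0)\sigma r^d/(2d)$ for $r \leq \delta$. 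Because $\Phi_V$ is continuous with $\Phi_V(0)=0$, I may pick $s_0 \in (0,1)$ with $\Phi_V(s_0) \leq \mu Q(\delta)$. Finally, because $Q$ increases continuously to $M(u_0) > M_0$, there exists $R_\sharp$ with $Q(R_\sharp) = M_0$; I then choose $R_0 \geq \max\bigl(R_\sharp / s_0,\ (2V(0)/(\mu u_0(0)))^{1/d}\bigr)$. On $s \in [0, \delta/R_0]$ the desired inequality follows from $\Phi_V(s) \leq V(0)\sigma s^d/d$ (since $V$ is radial non-increasing) combined with the pointwise lower bound on $u_0$; on $s \in [\delta/R_0, s_0]$ it follows from $\mu Q(sR_0) \geq \mu Q(\delta) \geq \Phi_V(s_0) \geq \Phi_V(s)$; on $s \in [s_0, 1]$ it follows from $sR_0 \geq R_\sharp$, giving $\mu Q(sR_0) \geq M_c \geq \Phi_V(s)$. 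Proposition \ref{prop:InitialMassConcen} then delivers $T_+(u_0) < \infty$.

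For the final assertion I would use the $L^1$-scaling of the scale-invariant problem: setting $u_\lambda(x) := \lambda^d u(\lambda x)$, the mass and the positivity $u_\lambda(0) > 0$ are preserved, and a direct computation (using $\int u_\lambda^m = \lambda^{d(m-1)} \int u^m$ with $d(m-1) = d-2$ and the corresponding $\lambda^{d-2}$ scaling of the Newtonian convolution in $d \geq 3$) gives $\F(u_\lambda) = \lambda^{d-2} \F(u)$. Since $d - 2 > 0$, it suffices to exhibit one admissible $u$ with $\F(u) > 0$: taking $\lambda \to \infty$ then yields data of arbitrarily large free energy, all of which satisfy the hypotheses of the first assertion. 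To construct such a $u$, I would take $u = u_\star + \rho_\epsilon$, where $u_\star$ is a radial bump centered at the origin of mass slightly above $M_c$ (so $u(0) > 0$ and $M(u) > M_c$), and $\rho_\epsilon$ is a thin radial spherical shell of fixed mass $\eta$ and radius $R$ with vanishing thickness $\epsilon$. Because the shell's volume vanishes while its mass is fixed, $\int \rho_\epsilon^m\, dx$ diverges as $\epsilon \to 0$; meanwhile all three interaction pieces $\int u_\star c_\star$, $\int u_\star c_{\rho_\epsilon}$, $\int \rho_\epsilon c_{\rho_\epsilon}$ remain uniformly bounded (the Newtonian potential of a radial shell of mass $\eta$ and radius $R$ is bounded by $c_d \eta R^{2-d}$), so $\F(u) \to +\infty$.

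The chief obstacle is the mass-ordering verification. Without monotonicity or concentration hypotheses on $u_0$, a naive choice of $R_0$ tends to produce intermediate radii at which $\Phi_V(r/R_0)$ exceeds $\mu Q(r)$. The three-zone split circumvents this: taking $R_0$ large both flattens the barrier enough that the local positivity of $u_0$ at the origin controls the inner zone, and pushes $R_\sharp/R_0 = s_0$ below $1$ so that the global mass surplus $M(u_0) > M_c$ controls the outer zone; the middle range is then bridged by the monotonicity of $Q$ and the continuity of $\Phi_V$ at $s=0$.
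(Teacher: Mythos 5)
Your verification of the mass ordering \eqref{ineq:massorderedic} via the three-zone split is essentially the paper's argument: the paper also fixes $R_1$ with $\int_{|x|\leq R_1}u_0 = M_0$ (your $R_\sharp$), handles $r>R_1$ automatically since $V$ has total mass $M_c$, uses the positivity $u_0(0)>0$ on a small ball near the origin, and bridges the middle annulus via the monotonicity of the mass profile and the smallness of $\Phi_V$ near $s=0$ after taking $R_0$ large. The paper phrases the inner zone as a pointwise bound $R_0^{-d}V(R_0^{-1}x)<\mu u_0(x)$ rather than your integrated one, but that is cosmetic.

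Your treatment of the second assertion takes a genuinely different route. The paper works within the class $\mathcal{V}_M$ of radially symmetric non-increasing functions of mass $M>M_c$, assumes for contradiction that $\sup_{\mathcal{V}_M}\F<\infty$, and — following Lemma~3.7 of \cite{Blanchet09} — combines a scaling argument with the HLS inequality \eqref{ineq:NHLS} to derive the reverse H\"older inequality $\norm{h}_m^m\norm{h}_1^{2/d}\lesssim_M\norm{h}_{2d/(d+2)}^2$, which is then refuted by the family $f_\delta=(\delta+|x|)^{-\alpha}\mathbf{1}_{B(0,1)}$; a density step finishes. You instead isolate the exact homogeneity $\F(\lambda^d u(\lambda\cdot))=\lambda^{d-2}\F(u)$ (whose verification you sketch correctly, since $d(m-1)=d-2$), which reduces the whole task to exhibiting a single admissible datum with $\F>0$, and you produce one explicitly by superposing a radial bump (to secure $u(0)>0$ and $M>M_c$) with a thin radial shell of fixed mass, exploiting that $\int\rho_\epsilon^m\to\infty$ as the thickness shrinks while the Newtonian shell potential (and hence all interaction terms) stays bounded. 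This is more elementary and more constructive than the paper's argument: it avoids the HLS-based contradiction, the reverse H\"older inequality, the restriction to the non-increasing class $\mathcal{V}_M$, and the somewhat hand-waved density step at the end. One small point you may want to make explicit: the three zones cover $[0,1]$ in all cases, because whenever the middle interval $[\delta/R_0,s_0]$ is empty, the inner zone $[0,\delta/R_0]$ already overlaps the outer zone $[s_0,1]$ (or simply covers $[0,1]$ if $\delta/R_0\geq 1$), so no gap arises regardless of whether $R_\sharp\geq\delta$.
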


\begin{remark} 
It would be interesting to determine if there exist solutions which have positive energy at blow-up time. 
Such a result would require deducing more precise information about the blow-up structure.
\end{remark}

\section{Global Existence} \label{sec:GE}
The proof of Theorems \ref{thm:GE} and \ref{thm:Concentration} hinge primarily on providing a precise decomposition of the potential energy,
\begin{equation*}
\int u(x) c(x) dx, 
\end{equation*}
into a leading order critical part and another part that is subcritical. 
The use of pseudo-differential operators for this purpose is the subject of the following section. 

\subsection{Approximate Inverse of Chemo-attractant PDE} \label{sec:ApproxInverse}
We use the standard symbol classes studied in for example \cite{BigStein}, summarized in the following definition. 
\begin{definition}[Symbol Class $S^{s}$, $s \in \Real$] \label{def:SymbolClass}
Suppose $b(x,\xi) \in C^\infty(\Real_x^d \times \Real_\xi^d)$ satisfies 
\begin{equation*}
\abs{\partial_x^\beta \partial_\xi^{\alpha}b(x,\xi)} \lesssim_{\beta,\alpha} (1 + \abs{\xi})^{s - \abs{\alpha}}, 
\end{equation*}
for multi-indices $\alpha,\beta$. 
Then we say both $b$ and the associated pseudo-differential operator ($\Psi$DO) $T_b$ defined by
\begin{equation*}
T_bf(x) = \frac{1}{(2\pi)^{d/2}}\int b(x,\xi)\hat{f}(\xi)e^{ix \xi} d\xi 
\end{equation*}  
are in the symbol class $S^{s}$ and say the symbol $b$ or the operator $T_b$ are of order $s$. 
We also denote $b(x,\xi) = \textup{sym}(T_b)$.
\end{definition}
Notice that with this definition $S^{s_1} \subset S^{s_2}$ whenever $s_1 < s_2$. 
Also, since the symbols are required to be smooth, these operators do not carry too much low-frequency information, unlike multiplier or symbol classes that allow singularities at the origin.  
For the standard relevant facts regarding these symbol classes, such as the symbolic calculus, localization estimates, boundedness on Sobolev spaces and singular integral representations, see Chapter 6 of \cite{BigStein}.  

\vspace{10pt}

Consider the PDE
\begin{equation}
Lc := -a(x)\Delta c - \grad a(x)\cdot \grad c + \gamma(x)c = u. \label{def:Lc}   
\end{equation}
By definition, $L$ is a pseudo-differential operator in $S^2$:
\begin{equation*}
Lc = \frac{1}{(2\pi)^{d/2}}\int \left(\gamma(x) + a(x)\abs{\xi}^2 - i\xi \cdot \grad a(x) \right)\hat{c}(\xi)e^{i x\xi} d\xi.  
\end{equation*}
Consider the approximate inverse of $L$, the $S^{-2}$ class $\Psi$DO 
\begin{equation*}
A_Hu := \frac{1}{(2\pi)^{d/2}}\int \frac{\Phi(\xi)\hat{u}(\xi)e^{ix\xi}}{a(x)\abs{\xi}^2 - i\xi \cdot \grad a(x) + \gamma(x)} d\xi, 
\end{equation*}
where $\Phi(\xi) = \prod_{j=1}^d\phi(\xi_j)$ with $\phi(t)$ a smooth function such that $0 \leq \phi(t) \leq 1$ which is identically one for $\abs{t} \geq 1$ and vanishes in a neighborhood of zero. We remark that if $\gamma(x)$ is strictly positive, we do not need the cut-off $\Phi(\xi)$.  
By the symbolic calculus [Chapter 6, Theorem 2 \cite{BigStein}],  
\begin{equation*}
A_HLc = c + T_{E}c, 
\end{equation*}
where the operator $T_E \in S^{-1}$ and the associated symbol $E$ has the following asymptotic expansion
\begin{equation*}
E \sim \Phi(\xi) - 1 + \sum_{\abs{\alpha} \geq 1} \frac{(2\pi i)^{-\abs{\alpha}}}{\alpha !}\partial_\xi^\alpha \textup{sym}(A_H)\partial_x^\alpha \textup{sym}(L), 
\end{equation*}
in the sense that the error in truncating the series for $N>\abs{\alpha}$ is a symbol of class $S^{-1-N}$. 
   
   \vspace{10pt}
   
Although both $A_H$ and $T_E$ are bounded operators from $L^p$ to itself for $1 < p < \infty$ (Chapter 6 \cite{BigStein}), if $\gamma(x)$ is not strictly positive then $L^{-1}$, the true inverse, is not. 
Necessarily, the low-frequency portion of $L^{-1}$ is still present implicitly in $T_Ec$. 
Instead of being bounded on $L^p$ to itself, $L^{-1}$ satisfies the following: for $1 < q < d/2$, $d/(d-2) < p < \infty$ and $\frac{2}{d} + \frac{1}{p} = \frac{1}{q}$, 
\begin{equation}
\norm{c}_p = \norm{L^{-1}u}_p \lesssim \norm{u}_q. \label{ineq:HomogenCbound}
\end{equation}
This can be seen at least formally by multiplying both sides of \eqref{def:Lc} by $c^{(d-2)p/d - 1}$ integrating, and applying the homogeneous Sobolev embedding (in the homogeneous case, \eqref{ineq:HomogenCbound} is the Hardy-Littlewood-Sobolev inequality). 

\vspace{10pt}

We may formally write down the operator $A_H$ as a singular integral operator by interchanging the integrals: 
\begin{align*}
A_H u(x) & = \frac{1}{(2\pi)^{d}} \int\int u(y)\frac{\Phi(\xi)e^{i\xi(x-y)}}{a(x) \abs{\xi}^2 - i\xi \cdot \grad a(x) + \gamma(x)} dy d\xi \\  
&= \frac{1}{(2\pi)^{d}} \int u(y) \left[\int\frac{\Phi(\xi)e^{i\xi(x-y)}}{a(x) \abs{\xi}^2 - i\xi \cdot \grad a(x) + \gamma(x)} d\xi \right] dy \\
& := \int u(y) K_H(x,y) dy.  
\end{align*}
The integral for $K_H(x,y)$ is not absolutely convergent so we cannot naively apply Fubini's theorem in the above computation rigorously, but it can be justified by a standard limiting procedure, as in \cite{BigStein}.
The key technical lemma for the proof of Theorem \ref{thm:GE} is the following characterization of $K_H(x,y)$. 

\begin{lemma}[Asymptotic Expansion for $K_H(x,y)$] \label{lem:AsympExpan} 
Let $K_H(x,y)$ be defined as above by the conditionally convergent integral 
\begin{equation*}
K_H(x,y) = \frac{1}{(2\pi)^d} \int\frac{\Phi(\xi)e^{i\xi(x-y)}}{a(x) \abs{\xi}^2 - i\xi \cdot \grad a(x) + \gamma(x)} d\xi. 
\end{equation*} 
Then we then have the following asymptotic expansion which holds uniformly in $x \in \Real^d$, 
\begin{align}
K_H(x,y) 
         & = \frac{c_d}{a(x)}\abs{x-y}^{2-d}  + o(\abs{x-y}^{2-d}) \;\;\; \hbox{ as } y \rightarrow x, \label{eq:AsymptK}
\end{align}
with $c_d$ given above by \eqref{def:cd}.
Moreover, recall that for all $\delta > 0$ and $N > 0$ (see for example pg 235 \cite{BigStein}),  
\begin{equation}
\abs{K_H(x,y)} \lesssim_{\delta,N} \abs{x-y}^{-N}, \;\;\; \abs{x-y} > \delta. \label{ineq:LinftyBoundK}
\end{equation}
\end{lemma}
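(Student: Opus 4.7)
The plan is to freeze $x$ and perform an asymptotic expansion of the symbol in $\xi$, so that the kernel splits into an explicitly computable principal part plus a symbol of strictly lower order whose kernel is demonstrably less singular. Concretely, I would decompose
\begin{equation*}
\frac{\Phi(\xi)}{a(x)|\xi|^2 - i\xi\cdot\nabla a(x)+\gamma(x)} \; = \; \frac{\Phi(\xi)}{a(x)|\xi|^2} \; + \; r(x,\xi),
\end{equation*}
where
\begin{equation*}
r(x,\xi) = \frac{\Phi(\xi)\bigl(i\xi\cdot\nabla a(x)-\gamma(x)\bigr)}{a(x)|\xi|^2 \bigl[a(x)|\xi|^2 - i\xi\cdot\nabla a(x) + \gamma(x)\bigr]}.
\end{equation*}
Using the hypothesis that $a$ is bounded below by a positive constant, together with the uniform bounds on $D^\alpha a$ and $D^\alpha\gamma$ and the fact that $\Phi$ vanishes in a neighborhood of the origin, a direct differentiation check shows $r \in S^{-3}$.

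For the principal part I would evaluate the conditionally convergent Fourier integral $\mathcal{N}_\Phi(h) := (2\pi)^{-d}\int \Phi(\xi)|\xi|^{-2}e^{i\xi h}\,d\xi$ by writing
\begin{equation*}
\mathcal{N}_\Phi(h) = \frac{1}{(2\pi)^d}\int \frac{e^{i\xi h}}{|\xi|^2}\,d\xi \; - \; \frac{1}{(2\pi)^d}\int \frac{(1-\Phi(\xi))e^{i\xi h}}{|\xi|^2}\,d\xi,
\end{equation*}
where the first integral is interpreted as a tempered distribution and equals $c_d|h|^{2-d}$ by the classical Fourier representation of the Newtonian potential (this is precisely where $c_d$ with the normalization \eqref{def:cd} appears). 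Since $d\geq 3$, the function $(1-\Phi(\xi))|\xi|^{-2}$ is compactly supported and in $L^1(\mathbb{R}^d)$, so its Fourier inverse is a bounded continuous function of $h$. Dividing by $a(x)$, this yields the principal term $c_d a(x)^{-1}|x-y|^{2-d}$ up to an $O(1)$ remainder, which is $o(|x-y|^{2-d})$ as $y\to x$.

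For the remainder I would invoke the standard integration-by-parts kernel estimates for pseudo-differential operators of negative order (see Ch.~6 of \cite{BigStein}): the kernel $R(x,y)$ of a symbol in $S^{-k}$ with $0 < k < d$ satisfies $|R(x,y)|\lesssim |x-y|^{k-d}$ uniformly in $x$, while for $k\geq d$ the kernel is bounded (with at worst a logarithmic singularity at the diagonal when $k=d$). Applied to $r\in S^{-3}$, this gives $|R(x,y)|\lesssim |x-y|^{3-d}$ for $d>3$ and a bounded-plus-$\log$ estimate when $d=3$; in either case $R(x,y) = o(|x-y|^{2-d})$. Adding the principal and remainder contributions gives the asserted expansion \eqref{eq:AsymptK}. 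The far-field bound \eqref{ineq:LinftyBoundK} is a standard oscillatory-integral estimate: away from the diagonal, one integrates by parts arbitrarily many times using $e^{i\xi(x-y)} = |x-y|^{-2}\bigl(-i(x-y)\cdot\nabla_\xi\bigr)e^{i\xi(x-y)}$, extracting arbitrary powers of $|x-y|^{-1}$.

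The main obstacle is less in the heuristic expansion itself than in verifying that all error estimates are uniform in the base point $x$ and that the $S^{-3}$ remainder kernel really obeys the claimed diagonal bound; this rests crucially on the uniform ellipticity ($\inf_x a(x) > 0$) and the uniform control on $D^\alpha a$, $D^\alpha \gamma$, which are exactly what place $r$ in a symbol class with constants independent of $x$. A minor technical nuisance is the proper interpretation of the Fourier inversion of $|\xi|^{-2}$ as a tempered distribution to pin down the constant $c_d$ correctly; this amounts to checking conventions against the identity $-\Delta(c_d|x|^{2-d})=\delta$.
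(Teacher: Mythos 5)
Your proof is correct, and it takes a genuinely different route from the paper's. The paper computes $K_H(x,y)$ directly: it uses the identity $\frac{1}{D}=\int_0^\infty e^{-tD}\,dt$ to turn the symbol into a Gaussian integral, performs a complex shift to handle the linear $i\xi\cdot\nabla a(x)$ term, then a second change of variables reducing everything to a $\Gamma$-function identity from which $c_d$ emerges explicitly, with uniform convergence in $x$ checked by hand. You instead split the symbol as $\frac{\Phi(\xi)}{a(x)|\xi|^2}+r(x,\xi)$, observe that $r\in S^{-3}$ (uniformly in $x$, thanks to $\inf a>0$ and the bounds on $D^\alpha a$, $D^\alpha\gamma$), recover the principal kernel $c_d\,a(x)^{-1}|x-y|^{2-d}+O(1)$ from the classical Fourier inversion of the Newtonian potential (the low-frequency correction $(1-\Phi)|\xi|^{-2}\in L^1$ since $d\geq 3$), and discard the remainder kernel via the standard $S^{-k}$ diagonal estimate $|R(x,y)|\lesssim|x-y|^{k-d}$ for $0<k<d$ (and bounded/log for $k\geq d$), which gives $|x-y|^{3-d}=o(|x-y|^{2-d})$. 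Your argument is more modular and leans on citable pseudo-differential machinery, whereas the paper's computation is self-contained and yields an exact integral representation of the error, which is sharper than a symbol-class bound but does essentially the same work. The far-field bound \eqref{ineq:LinftyBoundK} is treated identically in both (oscillatory-integral integration by parts). One small point worth making explicit in your writeup: the $O(1)$ remainder from the low-frequency cutoff and the $S^{-3}$ kernel bound must both be uniform in $x$; you flag this, and it does indeed follow from the uniform symbol seminorms, but a reader would want to see the seminorm estimate for $r$ spelled out once.
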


\begin{proof}
The bound \eqref{ineq:LinftyBoundK} is a standard consequence of $A_H \in S^{-2}$. 
Such localization should not be surprising since the the low frequency contribution of $L^{-1}$ is not included in $A_H$ due to the cut-off $\Phi$. 
Hence, we focus on \eqref{eq:AsymptK}.
Note the trick 
\begin{equation*}
\frac{1}{D} = \int_0^\infty e^{-tD} dt. 
\end{equation*} 
Hence, 
\begin{align*}
K_H(x,y) & = \frac{1}{(2\pi)^{d}}\int_0^\infty \int \Phi(\xi) e^{i\xi\cdot(x-y + t\grad a(x)) - t a(x)\abs{\xi}^2 - t\gamma(x)} d\xi dt \\
& = \frac{1}{(2\pi)^d}\prod_{j = 1}^{d}\int_0^\infty e^{-t\gamma(x)} \int_{-\infty}^\infty \phi(\xi_j) e^{i\xi_j(x_j-y_j + t\partial_{x_j} a(x)) - t a(x)\xi_j^2} d\xi_j dt. 
\end{align*}
Now define the complex change of variable $z_j = (ta(x))^{1/2} \xi_j - i\frac{x_j - y_j + t\partial_{x_j}a(x)}{2(ta(x))^{1/2}}$, 
\begin{align*}
\int_{-\infty}^\infty \phi(\xi_j) e^{i\xi_j(x_j-y_j + t\partial_{x_j} a(x)) - t a(x)\xi_j^2} d\xi_j dt & = \frac{e^{-\frac{\abs{x_j - y_j + t\partial_{x_j}a(x)}^2}{4ta(x)}}}{(ta)^{1/2}}\int_{\Gamma_j} \phi\left(\frac{\RealPart z_j}{(ta(x))^{1/2}}\right)e^{-z_j^2} dz_j\\ & :=  \frac{e^{-\frac{\abs{x_j - y_j + t\partial_{x_j}a(x)}^2}{4ta(x)}}}{(ta(x))^{1/2}}f_j(t), 
\end{align*}
where $\Gamma_j$ is the contour $\set{\ImPart z_j = \frac{x_j - y_j + t\partial_{x_j}a(x)}{2(ta(x))^{1/2}}}$.

\vspace{10pt}

Applying the above change of variables to the expression for $K_H(x,y)$ implies
\begin{align*}
K_H(x,y) = \frac{e^{-(x-y)\grad a(x)/(2a(x))}}{(2\pi)^d (a(x))^{d/2}} \int_0^\infty t^{-d/2}\left[\prod_{j=1}^df_j(t) \right] e^{-t\gamma(x) -t\frac{\abs{\grad a(x)}^2}{4a(x)}-\frac{\abs{x-y}^2}{4ta(x)}} dt.  
\end{align*}
We make the following additional change of variables,
\begin{equation*}
t = \frac{\abs{x-y}^{2}}{4a(x)\zeta^2}, 
\end{equation*}
which then yields
\begin{equation*} 
\zeta = \frac{\abs{x-y}}{(4ta(x))^{1/2}} \quad \hbox{ and } \quad dt = -\frac{\abs{x-y}^{2}}{2a(x) \zeta^3} d\zeta.
\end{equation*}
In terms of $\zeta$, $K_H(x,y)$ can be now written as
\begin{align*}
K_H(x,y) = \frac{e^{-(x-y)\grad a(x)/(2a(x))}}{(\pi)^d(2a(x))} \abs{x-y}^{2-d} \int_0^\infty \zeta^{d-3}\left[\prod_{j=1}^df_j(t(\zeta)) \right] e^{-\frac{\abs{x-y}^2}{4a(x)\zeta^2}\left( \frac{\abs{\grad a(x)}^2}{4a(x)} + \gamma(x)\right)-\zeta^{2}} d\zeta.   
\end{align*}
Due to the smoothness of $\phi$ and $a$ and the strict lower bound on $a$ we have the uniform (in $x$) convergence of the integral (note that here $\zeta$ is fixed and $z_j$ is the complex integration variable)
\begin{align*}
\lim_{y \rightarrow x}f_j(t(\zeta)) = \lim_{y \rightarrow x}\int_{\Gamma_j} \phi\left( \frac{2 \abs{\zeta} \RealPart z_j}{\abs{x-y}} \right) e^{-z_j^2} dz_j = \int_{\Real} e^{-z_j^2} dz_j = \pi^{1/2}.  
\end{align*}
Similarly we also have
\begin{equation*}
\lim_{y \rightarrow x} \int_0^\infty \zeta^{d-3}\left[\prod_{j=1}^df_j(t(\zeta)) \right] e^{-\frac{\abs{x-y}^2}{4a(x)\zeta^2}\left( \frac{\abs{\grad a(x)}^2}{4a(x)} + \gamma(x)\right)-\zeta^{2}} d\zeta = \pi^{d/2} \int_0^\infty \zeta^{d-3}e^{-\zeta^2} d\zeta, 
\end{equation*}
uniformly in $x \in \Real^d$ due to the uniform continuity of $\grad a(x), a(x),$ and $\gamma(x)$ as well as the strict positivity of $a$.
Recalling elementary facts about the Gamma function,
\begin{equation*}
\int_0^\infty \zeta^{d-3}e^{-\zeta^2} d\zeta = \frac{1}{2}\Gamma\left(\frac{d}{2}-1\right) = \frac{2}{d(d-2)}\Gamma\left(\frac{d}{2} + 1 \right). 
\end{equation*}
Hence, 
\begin{align*}
K_H(x,y) & = \frac{\Gamma\left(\frac{d}{2}+1\right)}{d(d-2)\pi^{d/2}a(x)}\abs{x-y}^{2-d} + o_{y \rightarrow x}\left(\abs{x-y}^{2-d}\right),
\end{align*}
and \eqref{eq:AsymptK} is proved. 
\end{proof} 

The error term $T_Ec$ in the approximate inverse can be controlled by the following lemma. Here we take advantage of the smoothing nature of $T_E \in S^{-1}$ to show that in the potential energy, this term is subcritical in the sense that the effective power of $\norm{u}_m$ associated with the term is strictly less than $m$.   
Naturally, one must eventually use \eqref{ineq:HomogenCbound} in order to prove this lemma.  

\begin{lemma}[The error term is subcritical] \label{lem:subcriticalT_E} 
Let $d \geq 3$, suppose $u \in L^1 (\mathbb{R}^d) \cap L^{m}(\mathbb{R}^d)$ and $c$ is the strong solution of \eqref{def:Lc} which vanishes at infinity. Then, 
\begin{equation}
\int u T_E c dx \lesssim \norm{u}_1^{2-2\theta}\norm{u}_{m}^{2\theta}, \label{ineq:TEc}
\end{equation}
for some $0 < 2\theta < m=2-2/d$. 
\end{lemma}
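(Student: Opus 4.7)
The plan is to combine H\"older's inequality with the smoothing of $T_E \in S^{-1}$, the homogeneous bound \eqref{ineq:HomogenCbound} on $c$, and interpolation of $\|u\|_p$ between $\|u\|_1$ and $\|u\|_m$. The gain of (up to) one derivative afforded by $T_E$ is precisely what produces strict room below the critical exponent $m$; treating $T_E$ merely as an $L^p$-bounded zeroth-order operator would return $2\theta = m$ on the nose.

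Concretely, I would first apply H\"older to obtain $\int u\, T_E c\, dx \leq \|u\|_p \|T_E c\|_{p'}$ for an exponent $p$ to be chosen. Since $T_E \in S^{-1} \subset S^{-\sigma}$ for every $\sigma \in (0,1]$, standard mapping properties of negative-order pseudodifferential operators (via Bessel-potential factorization and fractional integration) give $\|T_E c\|_{p'} \lesssim \|c\|_r$ whenever $\tfrac{1}{p'} = \tfrac{1}{r} - \tfrac{\sigma}{d}$ and $1 < r < d/\sigma$. Feeding this into \eqref{ineq:HomogenCbound}, $\|c\|_r \lesssim \|u\|_q$ with $\tfrac{1}{q} = \tfrac{1}{r} + \tfrac{2}{d}$, yields
\[
\int u\, T_E c\, dx \lesssim \|u\|_p\, \|u\|_q, \qquad \tfrac{1}{p} + \tfrac{1}{q} = 1 + \tfrac{2+\sigma}{d}.
\]

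Taking the symmetric choice $p = q$, I would next check that, provided $0 < \sigma < d - 2$, one has $p = q \in (1, \min(m, d/2))$ together with the auxiliary constraints $d/(d-2) < r < d/\sigma$; all of these collapse to the single inequality $\sigma < d-2$. Both factors may then be interpolated, $\|u\|_p \leq \|u\|_1^{1-\alpha_p}\|u\|_m^{\alpha_p}$ with $\alpha_p = (1 - 1/p)\cdot 2(d-1)/(d-2)$, producing
\[
\int u\, T_E c\, dx \lesssim \|u\|_1^{2-2\theta}\, \|u\|_m^{2\theta}, \qquad 2\theta \;=\; \frac{2(d-1)(d-2-\sigma)}{d(d-2)}.
\]
A direct comparison with $m = 2(d-1)/d$ shows $2\theta/m = (d-2-\sigma)/(d-2)$, which lies in $(0,1)$ precisely for $\sigma \in (0, d-2)$. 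One takes $\sigma = 1$ when $d \geq 4$ and any $\sigma \in (0,1)$ when $d = 3$.

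The main obstacle is the borderline nature of dimension three: there $d - 2 = 1$ equals the maximal smoothing available from $S^{-1}$, forcing the use of strictly fractional smoothing $\sigma < 1$. Without any smoothing ($\sigma = 0$) the exponent identity degenerates to $1/p + 1/q = 1 + 2/d$ and one recovers exactly $2\theta = m$, losing all gain over \eqref{ineq:NHLS}. Hence the whole argument rests on genuinely exploiting $T_E \in S^{-1}$ rather than $S^0$, and in $d=3$ the improvement one extracts is only marginal.
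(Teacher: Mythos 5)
Your proof is correct and takes essentially the same route as the paper: exploit the fractional smoothing of $T_E\in S^{-1}$ together with the elliptic estimate \eqref{ineq:HomogenCbound} and H\"older/Sobolev to land on $\norm{u}_p\norm{u}_q$ with $1<p,q<m$, then interpolate. The paper's concrete argument amounts to fixing $\sigma=1/2$ (splitting $\jap{\grad}^{1/2}$ between the two H\"older factors via self-adjointness and taking $p=4d/(2d-3)$), and your general formula $2\theta=\tfrac{2(d-1)(d-2-\sigma)}{d(d-2)}$ reduces exactly to the paper's $\theta=\tfrac{(2d-5)(2d-2)}{4d(d-2)}$ at $\sigma=1/2$.
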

\begin{proof} 
Define 
\begin{equation*}
p = \frac{4d}{2d-3}. 
\end{equation*}
Define the standard Fourier multiplier $\widehat{\jap{\grad}^{s}f} := \left(1 + \abs{\xi}^2\right)^{s/2}\hat{f}(\xi)$. 
Since the multiplier $\jap{\grad}^{1/2}$ is self-adjoint we have, 
\begin{align*}
\abs{\int uT_Ec dx} \leq \norm{\jap{\grad}^{-1/2}u}_{\frac{p}{p-1}}\norm{\jap{\grad}^{1/2}T_Ec}_p
\end{align*}
Define 
\begin{equation*}
\frac{1}{q} = \frac{1}{p} + \frac{2}{d} = \frac{2d + 5}{4d} < 1. 
\end{equation*}
Definition \ref{def:SymbolClass} implies that since $T_E \in S^{-1}$, we also have $T_E\in S^{-1/2}$. 
Hence, $T_E$ is a bounded operator $L^{p}$ to $W^{1/2,p}$ [Chapter 6, Proposition 5 \cite{BigStein}]. 
Using this and the elliptic $L^p$ estimate \eqref{ineq:HomogenCbound} we have
\begin{equation*}
\norm{\jap{\grad}^{1/2}T_Ec}_p \lesssim \norm{c}_p \lesssim \norm{u}_q. 
\end{equation*}
One may easily verify that
\begin{equation*}
\frac{1}{q} = \frac{p-1}{p} + \frac{1}{2d}, 
\end{equation*}
and therefore the inhomogeneous Sobolev embedding theorem implies
\begin{equation*}
\norm{\jap{\grad}^{-1/2}u}_{\frac{p}{p-1}} \lesssim \norm{u}_q.  
\end{equation*}
Hence, we see the relevance of $q$ as we have in total,
\begin{align*}
\abs{\int u T_Ec dx } \lesssim \norm{u}_q^2. 
\end{align*}
In order to interpolate between $L^1$ and $L^m$, we need $q < m=2-2/d$, which follows easily from $d \geq 3$. 
Then, for 
\begin{equation*}
\theta = \frac{(2d-5)(2d-2)}{4d(d-2)} \in (0,1)
\end{equation*}
we have, 
\begin{equation*}
\abs{\int uT_Ec dx} \lesssim \norm{u}_1^{2-2\theta}\norm{u}^{2\theta}_{m}.
\end{equation*}
To prove subcriticality it remains to confirm that we have $2\theta < m$, which again follows from $d \geq 3$.   
\end{proof}

\subsection{Proof of Theorem \ref{thm:GE}}
In this section we complete the proof of Theorem \ref{thm:GE}. 

\begin{proof} 
We prove Theorem \ref{thm:GE} by producing a uniform in time bound on the entropy (which is basically just the $L^m$ norm).  This in turn proves that the solution is uniformly equi-integrable and Theorem \ref{thm:loc_theory} completes the proof.

By the energy dissipation inequality \eqref{ineq:EnrDiss} and the definition of $A_H$,
\begin{align*}
\frac{1}{m-1}\int u^m dx - \frac{1}{2}\int uA_H u dx + \frac{1}{2}\int uT_E c dx \leq \F(u_0). 
\end{align*}
By \eqref{ineq:TEc} (Lemma \ref{lem:subcriticalT_E}) we then have, for some $0 < 2\theta < m = 2-2/d$, 
\begin{align*}
\frac{1}{m-1}\int u^m dx - \frac{1}{2}\int uA_H u dx - C\norm{u}_1^{2-2\theta}\norm{u}_{m}^{2\theta} \leq \F(u_0). 
\end{align*}
Using Lemma \ref{lem:AsympExpan}, for every $\epsilon > 0$, we may choose a $\delta > 0$ such that
\begin{align*}
\int uA_H u dx \leq \int\int_{\abs{x-y}<\delta} \left(\frac{c_d}{a(x)} + \epsilon\right)\frac{u(x)u(y)}{\abs{x-y}^{d-2}} dx dy + C(\delta)\norm{u}_1^2,
\end{align*}
where $c_d$, given by \eqref{def:cd}, is the normalization constant in the Newtonian potential. 
Hence, by the sharp Hardy-Littlewood-Sobolev inequality \eqref{ineq:NHLS} we have,  
\begin{align*}
\left(\frac{1}{m-1} - \frac{C_\star}{2}\left(\frac{c_d}{\min a(x)} + \epsilon\right)\norm{u}_1^{2/d}\right)\norm{u}_{m}^{m}\leq C\norm{u}_1^{2-2\theta}\norm{u}_{m}^{2\theta} + C(\delta)\norm{u}_1^2 + \F(u_0). 
\end{align*}
If $M(u) < M_c$ with $M_c$ given by \eqref{def:critmass} then we may choose $\epsilon$ sufficiently small such that the first term is positive. 
Since $m > 2\theta$ this then implies a global uniform-in-time bound on $\norm{u(t)}_{m}$. This in turn implies a global $L^\infty$ bound on $u(t)$ by the continuation criterion in Theorem \ref{thm:loc_theory}. 
\end{proof} 
\begin{remark}\label{rmk:R2inGE}
It appears the proof of Theorem \ref{thm:GE} would require some non-trivial refinement in order to treat the $\Real^2$ case. 
Naturally, the asymptotic expansion of $K_H(x,y)$ along the diagonal $x \sim y$ would need to be refined in order to capture the logarithmic singularity accurately (Lemma \ref{lem:AsympExpan}). 
A likely more delicate task would be adjusting Lemma \ref{lem:subcriticalT_E} to yield an error which is subcritical relative to the positive part of the entropy $\int u (\log u)_+ dx$.
\end{remark}

\subsection{Proof of Theorem \ref{thm:Concentration}}
Like Theorem \ref{thm:GE}, the proof of Theorem \ref{thm:Concentration} is essentially a precise estimate of the potential energy which allows the free energy to uniformly control the $L^m$ norm of the solution.  
In Theorem \ref{thm:GE}, after the results of \S\ref{sec:ApproxInverse}, the assumption of subcritical total mass was used to make this estimate. 
Theorem \ref{thm:Concentration} refines this result to show that the free energy uniformly controls the $L^m$ norm unless a critical amount of mass concentrates into a single point or at least along a subsequence of points escaping to infinity.  
The main idea of the proof is to combine the related result for critical mass solutions in 2D found in \cite{Blanchet08} with a more intricate concentration compactness-style decomposition. 
It is a little technical, but as with many concentration compactness arguments, the key idea is simple: if the solution does not concentrate a critical amount of mass at blow-up, then we may divide $\Real^d$ into a finite number of domains, each containing subcritical mass at blow-up. In each region we may apply reasoning similar to Theorem \ref{thm:GE} to control the corresponding contribution to the free energy, noting that long-range interactions between different regions are ``sub-critical" since the leading order term in the approximation of the potential energy deduced in \S\ref{sec:ApproxInverse} will not be present.  

\begin{proof}
We suppose Theorem \ref{thm:Concentration} is false and go towards a contradiction. 
For the duration of the proof, the notation $B(x,r)$ will denote the \emph{closed} ball with center $x$ and radius $r$. 
Moreover, let $\bar{a} = \min a$, $A = \max a$ and define 
\begin{equation*}
M^\star_c := \left( \frac{2}{(m-1)c_d C_\star}\right)^{d/2}. 
\end{equation*}

The assumption that Theorem \ref{thm:Concentration} is false is equivalent to the existence of a time sequence $t_k \to T_+$ and $\epsilon > 0$ such that for all sequences $\set{x_k}_{k = 1}^\infty \subset \Real^d$, 
\begin{equation}
\limsup_{r \rightarrow 0} \limsup_{k \rightarrow \infty} \left(\int_{B(x_k,r)}u(t_k,x) dx - a(x_k)^{d/2}M_c^\star \right) < -2\epsilon. \label{def:epsilonnstar}
\end{equation}
Let $u_k(x):= u_k(t_k,x)$.
The purpose of the additional factor of two is purely cosmetic and will be apparent in what follows. 
Since the proof mostly rests on an estimate of the potential energy, define the following (since $L^{-1}$ is self-adjoint)  
\begin{equation*}
PE(f,g) := \frac{1}{2}\int f L^{-1}g dx = \frac{1}{2}\int g L^{-1}f dx. 
\end{equation*}
The proof begins with the following equivalent version of the concentration compactness lemma found in \cite{LionsCC84}
\begin{lemma} \label{lem:CC} 
Let $\set{u_k}$ be a sequence of non-negative functions in $L^1(\Real^d)$ with $\norm{u_k}_1 = M \in (0,\infty)$. 
Then for all $\delta > 0$, there exists a subsequence (not relabeled) of $\set{u_k}$ such that the following holds: there exists an $L < \infty$ such that there exists sequences $x_k^l \subset \Real^d, R^l \in (0,\infty)$ and $u_k^l,e_k,u_k^V \in L^1$ (which all depend on $\delta$) for all $l \in \set{1,...,L}$ which satisfy
\begin{equation}
u_k = \sum_{l = 1}^L u_k^l + u_k^V + e_k, 
\end{equation} 
with the following properties: 
\begin{itemize}
\item[(i)] The supports of $u_k^l$, $e_k$ and $u_k^V$ are all disjoint.    
\item[(ii)] $\supp u_k^l \subset B(x_k^l,R^l)$ and $\lim_{k \rightarrow \infty} \abs{x_k^{m} - x_k^{l}} = \infty$ if $l \neq m$.  
\item[(iii)] $\limsup_{k \rightarrow \infty}\norm{e_k}_1 < \delta$. 
\item[(iv)] For all $l$, $\lim_{k \rightarrow \infty}\textup{dist}\,(\supp u_k^l,\supp u_k^V) = \infty $. Moreover, 
$u_k^V$ is vanishing in the sense that for all $R > 0$,  
\begin{equation*}
\lim_{k \rightarrow \infty} \sup_{x \in \Real^d} \int_{B(x,R)} u_k^V(y) dy = 0.  
\end{equation*}  
\end{itemize}
\end{lemma}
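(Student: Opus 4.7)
This is Lions' concentration-compactness principle \cite{LionsCC84} recast as an iterative profile decomposition. I would prove it by (a) extracting profiles one at a time using a Lévy-type concentration function, (b) showing that the limiting residual after all extractions is vanishing, and (c) truncating to finitely many profiles while absorbing the discarded tail into $e_k$.

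For any nonnegative $L^1$-bounded sequence $\set{f_k}$, introduce the concentration function $Q_k^f(r) := \sup_{x \in \Real^d} \int_{B(x,r)} f_k(y)\,dy$, which is nondecreasing in $r$. By Helly's selection theorem applied on $\mathbb{Q}_{\ge 0}$ together with a diagonal argument, I may extract a subsequence so that $Q_k^f(r) \to Q^f(r)$ for every $r \ge 0$; set $\mu(f) := \lim_{r \to \infty} Q^f(r)$, and note that $\set{f_k}$ is vanishing if and only if $\mu(f) = 0$. Now iterate on $\set{u_k}$: put $r_k^{(0)} := u_k$, and at stage $l \ge 1$ extract a further subsequence to obtain the limiting $Q^{(l-1)}$ and $\mu^{(l-1)}$ for $r_k^{(l-1)}$. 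If $\mu^{(l-1)} > 0$, choose $R^l$ with $Q^{(l-1)}(R^l) > \mu^{(l-1)}/2$ and centers $x_k^l$ with $\int_{B(x_k^l, R^l)} r_k^{(l-1)} \to Q^{(l-1)}(R^l)$, then set $u_k^l := r_k^{(l-1)} \mathbf{1}_{B(x_k^l, R^l)}$ and $r_k^{(l)} := r_k^{(l-1)} - u_k^l$. Each bubble has limiting mass $\beta_l := \lim_k \norm{u_k^l}_1 \ge \mu^{(l-1)}/2$; the bubbles have pairwise disjoint supports, so $\sum_l \beta_l \le M$, which forces $\mu^{(l)} \to 0$ as $l \to \infty$.

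After diagonalizing to a single subsequence realizing every convergence above, I pass to yet another subsequence so that for each pair $l \ne l'$ either $\abs{x_k^l - x_k^{l'}} \to \infty$ or the distance stays bounded; in the latter case I merge that pair of bubbles by enlarging the radius of one to swallow the other ball, producing pairwise divergent centers after finitely many merges. The limiting residual $v_k := u_k - \sum_{l \ge 1} u_k^l$ (the sum converges in $L^1$ for each $k$ since it is dominated by $u_k$) satisfies $v_k \le r_k^{(l)}$ for every $l$, hence $\limsup_k Q_k^v(R) \le \mu^{(l)}$ for all $l,R$; letting $l \to \infty$ shows $v_k$ is vanishing. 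Truncate by choosing $L$ so that $\sum_{l > L} \beta_l < \delta/4$, and set $e_k^{(0)} := \sum_{l > L} u_k^l$, which has $\limsup_k \norm{e_k^{(0)}}_1 \le \delta/4$.

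Finally, to force $\textup{dist}(\supp u_k^l, \supp u_k^V) \to \infty$ rather than merely disjoint supports, pick $D_k \to \infty$ slowly (by a diagonal argument exploiting the vanishing of $v_k$ and finiteness of $L$) so that $\int_{\cup_{l=1}^L B(x_k^l, R^l + D_k)} v_k\,dy \to 0$; then define $e_k := e_k^{(0)} + v_k \mathbf{1}_{\cup_l B(x_k^l, R^l+D_k)}$ and $u_k^V := v_k \mathbf{1}_{\cup_l B(x_k^l, R^l+D_k)^c}$. All four properties follow: disjoint supports by construction, divergent centers from the merging step, $\limsup_k \norm{e_k}_1 \le \delta/4 + o(1) < \delta$, $u_k^V$ vanishing as a subset of $v_k$, and separation $\ge D_k \to \infty$ between each $\supp u_k^l$ and $\supp u_k^V$. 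The principal technical difficulty is orchestrating the countable diagonalization so that a single subsequence simultaneously realizes the convergence of every $Q^{(l)}$, the masses $\beta_l$, the configuration of centers (and their merges), and the slow growth of $D_k$; once coordinated, the verification of (i)--(iv) is bookkeeping.
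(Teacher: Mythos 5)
Your proposal follows the standard ``bubble extraction'' profile decomposition via a L\'evy-type concentration function, which is a genuinely different route from the paper: the paper iterates Lions' compactness/vanishing/dichotomy trichotomy (Lemma II.1 of \cite{LionsCC84}) directly, terminating after $\mathcal{O}(\delta^{-1})$ steps because the residual's mass is tracked against $\delta/2$ at each stage. Both are legitimate viewpoints, but your route has a genuine gap at the truncation step.

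You assert that choosing $L$ with $\sum_{l>L}\beta_l < \delta/4$ gives $\limsup_k \norm{e_k^{(0)}}_1 \leq \delta/4$ for $e_k^{(0)} := \sum_{l>L} u_k^l$. This does not follow from $\norm{u_k^l}_1 \to \beta_l$ pointwise in $l$ together with $\sum_l \norm{u_k^l}_1 \leq M$. The issue is that the interchange $\limsup_k \sum_{l>L}\norm{u_k^l}_1 \leq \sum_{l>L}\beta_l$ requires some uniform-in-$l$ control that a naive Cantor diagonalization does not provide. Concretely, consider a sequence $u_k$ consisting of geometric bubbles of mass $M 2^{-l-1}$ for $l \leq k$, plus a vanishing cloud of $k$ far-separated pieces each of mass $M/(2k)$. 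For fixed $k$ and stages $l>k$, the extraction center $x_k^l$ is only constrained by convergence as $k\to\infty$, so nothing prevents it from grabbing a piece of the cloud; then $\sum_{l>L}\norm{u_k^l}_1$ stays near $M/2$ for every $L$ even though $\sum_{l>L}\beta_l \to 0$. In other words, the profile ladder can siphon off the vanishing mass at late stages $l$, and the ``orthogonality of masses'' you implicitly invoke is false without extra work. The fix is a quantitative diagonalization: choose the subsequence so that $\abs{\norm{u_k^l}_1 - \beta_l} < 2^{-l}$ for all $k \geq K_l$, and \emph{redefine} $u_k^l := 0$ for $k < K_l$ (legitimate since only finitely many terms change for each $l$); then $\sum_{l>L}\norm{u_k^l}_1 \leq \sum_{l>L}(\beta_l + 2^{-l})$ uniformly in $k$, and the vanishing residual $v_k$ absorbs the discarded pieces. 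You flag ``orchestrating the countable diagonalization'' as the main difficulty, but you do not isolate this particular issue, and without it the truncation estimate fails.

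A secondary issue: you perform the merging (to make centers diverge pairwise) before truncating to finitely many profiles. With countably many bubbles, the equivalence classes of mutually bounded-distance bubbles could be infinite, and the merged ball may then have unbounded radius (e.g.\ $x_k^{l_j}$ roughly at position $j$, all pairwise distances bounded but not uniformly). Truncating to $L$ bubbles first, and merging only the finitely many survivors, removes this problem. The paper's iterated-trichotomy argument sidesteps both issues because the dichotomy step of Lions' lemma already produces the unbounded-separation structure, and the algorithm is designed to terminate with the residual mass tracked explicitly against $\delta$.
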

\begin{proof} 
The proof proceeds by iterating Lemma II.1 in \cite{LionsCC84}. 
As the starting point of the algorithm, define $\rho_k^1 = u_k$. 
Now consider the inductive step as follows. Suppose that $u_k^l$ which satisfy the conclusions of Lemma \ref{lem:CC} have been determined for $0 \leq l < j$ and suppose that $v_k^l$ are a sequence which satisfies the properties: 
\begin{itemize}
\item[(a)] $\limsup_{k\rightarrow \infty}\norm{v_k^l}_{1} < 2^{-l-1}\delta$
\item[(b)] For all $0 \leq l < j$, the support of $v_k^l$ are disjoint from $u_k^m$ for all $m$ and $v_k^m$ for all $m \neq l$.
\end{itemize}
We are also given $\rho_k^j := u_k - \sum_{0 \leq l < j} u_k^l + v_k^l$. 
By Lemma II.1 in \cite{LionsCC84} one of three possibilities occurs up to extraction of a subsequence (not relabeled).  
If $\rho_k^j$ is tight up to translation then there exists a sequence $x_k^j$ and a $R^j$ such that 
\begin{align*}
\int_{\Real^d \setminus B(x_k^j,R^j)} \rho_k^j(y) dy < 2^{-j}\delta.  
\end{align*}
Hence, define $u_k^j := \rho_k^j |_{B(x_k^j, R^j)}$ and $e_k := \rho_k^j|_{\Real^d \setminus B(x_k^j,R^j)} + \sum_{0 \leq l < j} v_k^l$. 
We may terminate the algorithm here and set $L = j$. 
If instead $\rho_k^j$ is vanishing then simply define $u_k^V := \rho_k^j$ and $e_k = \sum_{0 \leq l < j} v_k^l$. Terminate the algorithm here and set $L= j-1$. 
In the case of dichotomy, decompose $\rho_k^j$ as in Lemma II.1 of \cite{LionsCC84} as
\begin{align*}
\rho_k^j = \rho_k^{j,1} + \rho_k^{j,2} + v_k^{j},  
\end{align*}
with $\limsup_{k \rightarrow \infty} \norm{v_k^1}_1 < 2^{-j-1}\delta$. The following additional properties are satisfied: there exists a sequence $x_k^j$ and an $R^j > 0$ such that $\rho_k^{j,1} = \rho_k^j|_{B(x^j_k,R^j)}$ and a sequence $R_k > 0$ with $\lim_{k \rightarrow \infty} R_k = \infty$ such that $\rho_k^{j,2} = \rho^j_k|_{\Real^d \setminus B(x_k^j,R_k)}$.
This latter fact is what ensures (ii) and the  first part of (iv). 
Take $u_k^{j} = \rho_k^{j,1}$ (with the associated $x_k^j$ and $R^j$). 
If $\limsup_{k \rightarrow \infty}\norm{\rho_k^{j,2}}_1 < \delta/2$ then terminate the algorithm here with $L = j$ and define 
$e_k = \rho_k^{j,2} + \sum_{0 \leq l \leq j}v_k^{l}$.  
If not, continue the algorithm with $\rho_k^{j+1} = \rho_k^{j,2}$.  
This termination condition ensures that the algorithm terminates in at most $\mathcal{O}(\delta^{-1})$ steps.  
\end{proof} 

We will fix $\delta = \delta(a,\e)$ later depending only on $\epsilon$, $a$ and the total mass $M$. 
We apply Lemma \ref{lem:CC} to $u_k := u(t_k)$, $t_k \nearrow T_+$ with this $\delta$.  
This supplies the first step in the decomposition, which must be refined further. 
The following geometric decomposition lemma refines the decomposition by subdividing the $u_k^l$ into pieces of subcritical mass.
 The authors would like to acknowledge Jonas Azzam for assisting in the proof, which is based on an iterative application of the Besicovitch covering theorem \cite{Mattila}. 
We postpone the proof until after the proof of Theorem \ref{thm:Concentration}.
\begin{lemma} \label{lem:decompo} 
Let $u_k=u_k(t_k)$ as given above. Then for any $\delta>0$ there exists a collection of $N = N(\delta) < \infty$ disjoint closed balls with centers $\hat{x}_k^j \subset \Real^d$ and radii $r_j > 0$, $1 \leq j \leq N$ which satisfy the following properties for all $k$ sufficiently large,  
\begin{itemize}
\item[(1)] For all $j$, $\int_{B(\hat{x}_k^j,r_j)} u_k(y) dy < \left(\liminf_{k \rightarrow \infty} \min_{ \abs{x - \hat{x}_k^j} < r_j} a(x) \right)^{d/2}M_c^\star - \epsilon$. 
\item[(2)] $\int_{\Real^d \setminus \cup_{j = 1}^N B(\hat{x}_k^j,r_j)} u_k(y) - u_k^V dy < 2\delta$. 
\item[(3)] There exists a constant $c_0 > 0$ independent of $k$ such that $\min_{1\leq i,j \leq N}\abs{\hat{x}_k^j - \hat{x}_k^i} \geq c_0$.
\end{itemize}
\end{lemma}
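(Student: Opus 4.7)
The strategy is to refine the decomposition $u_k = \sum_{l=1}^L u_k^l + u_k^V + e_k$ from Lemma \ref{lem:CC} by subdividing each concentration cluster $u_k^l$ into a finite disjoint family of balls of strictly subcritical mass, losing only $O(\delta)$ of the mass of $u_k^l$. Condition (2) will then follow by mass accounting and condition (3) from the mutual escape-to-infinity of the cluster centers; $\delta$ is assumed chosen sufficiently small depending on $\epsilon$, $a$, and $M$.

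\textbf{Uniform subcritical radius within a cluster.} Fix $l \in \{1,\ldots,L\}$ and translate by $x_k^l$, so that $\supp u_k^l \subset B(0, R^l)$, a compact set independent of $k$. For each $y \in B(0, R^l)$, the contradiction hypothesis \eqref{def:epsilonnstar} applied to the sequence $z_k := y + x_k^l$, combined with uniform continuity of $a$ (Lipschitz since $\grad a$ is bounded), yields a radius $r_y > 0$ such that, for $k$ large,
\[
\int_{B(y + x_k^l,\, r_y)} u_k(z)\,dz < \min_{|z - (y + x_k^l)| < r_y} a(z)^{d/2}\,M_c^\star - 3\epsilon/2.
\]
A standard diagonal-sequence contradiction then upgrades this pointwise-in-$y$ bound to a \emph{uniform} subcritical radius $s_l > 0$ valid for every $y \in B(0, R^l)$: if no such uniform $s_l$ existed, one could select $y_n$ and $r_n \to 0$ and extract a diagonal sequence violating \eqref{def:epsilonnstar}.

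\textbf{Disjoint cover of the weak-$\star$ limit.} Passing to a further subsequence, the translated measures $u_k^l(\cdot + x_k^l)\,dx$ converge weak-$\star$ to a finite Borel measure $\tilde n_l^\star$ supported in $B(0, R^l)$, which by the preceding step assigns strictly subcritical mass to every closed ball of radius $\leq s_l$ centered in its support. Apply the Vitali-Besicovitch covering theorem for Radon measures (see \cite{Mattila}) to the Vitali class of all such balls: this produces a countable disjoint subfamily covering $\tilde n_l^\star$-a.e.\ point. Truncate to a finite subfamily $\{B(z_j^l, \rho_j^l)\}_{j=1}^{N_l}$ with $\tilde n_l^\star\big(\bigcup_j B(z_j^l, \rho_j^l)\big) \geq \tilde n_l^\star(\Real^d) - \delta/(2L)$, and perturb each radius slightly so $\partial B(z_j^l, \rho_j^l)$ is $\tilde n_l^\star$-null (possible since only countably many radii have positive-mass boundary), preserving disjointness and the mass inequalities.

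\textbf{Transfer, aggregation, and main obstacle.} By the continuity-set property and weak-$\star$ convergence, $\int_{B(z_j^l + x_k^l, \rho_j^l)} u_k^l\,dx \to \tilde n_l^\star(B(z_j^l, \rho_j^l))$ as $k \to \infty$. By properties (ii) and (iv) of Lemma \ref{lem:CC}, for $k$ large these balls are disjoint from $\supp u_k^m$ (for $m \neq l$) and from $\supp u_k^V$, hence $\int_B u_k \leq \int_B u_k^l + \|e_k\|_1$; the $3\epsilon/2$-slack from the first step absorbs both the weak-$\star$ convergence error and the $\|e_k\|_1 < \delta \leq \epsilon/2$ contribution, giving (1). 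Aggregating across $l = 1, \ldots, L$ produces the family $\{B(\hat x_k^j, r_j)\}_{j=1}^N$ with $N = \sum_l N_l$ independent of $k$; (2) follows from the accounting $\sum_l \delta/(2L) + \|e_k\|_1 < 2\delta$, and (3) from the within-cluster separation (the $z_j^l$'s form a fixed finite configuration, giving a positive lower bound on pairwise distances) together with $|x_k^l - x_k^{l'}| \to \infty$ from Lemma \ref{lem:CC}(ii). The central technical ingredient is the Vitali-Besicovitch disjoint cover of the limit measure; atoms pose no extra difficulty, because the contradiction hypothesis enforces a uniform $\epsilon$-gap below criticality at every atom, so sufficiently small balls around atoms remain subcritical.
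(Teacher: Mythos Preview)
Your argument is essentially correct but takes a somewhat different covering route than the paper. The paper works directly with the weak-$\star$ limit measure $n^\star$ and applies the Besicovitch covering theorem \emph{iteratively}: at each stage it covers the remaining support by $Q(d)$ disjoint families, selects the family carrying the most $n^\star$-mass (at least a $1/Q(d)$-fraction), truncates to finitely many balls, removes them, and repeats on the residual set. The geometric decay $(1 - 1/(2Q(d)))^m$ of the leftover mass forces termination after finitely many steps. Your approach instead first establishes a uniform subcritical radius $s_l$ by a diagonal argument against the contradiction hypothesis \eqref{def:epsilonnstar}, and then invokes the Vitali--Besicovitch theorem for Radon measures in a single shot to obtain a countable disjoint $\tilde n_l^\star$-a.e.\ cover, which you truncate. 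Your route is more direct and avoids the iteration, at the cost of the preliminary uniformity step and an appeal to the slightly stronger Vitali--Besicovitch statement; the paper's route is more elementary but longer. Both yield the same decomposition, and your verification of (1)--(3) via the transfer through $\tilde n_l^\star$ and Lemma~\ref{lem:CC}(ii),(iv) is sound.

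One small point worth tightening: your ``perturb each radius slightly'' step needs care. If you \emph{shrink} the radii to null-boundary values you may lose the $\tilde n_l^\star$-mass sitting on the original spheres, which is not a priori small and could spoil the $\delta/(2L)$ accounting. The clean fix is to restrict the Vitali class from the outset to closed balls $B(y,r)$ with $r \le s_l$ \emph{and} $\tilde n_l^\star(\partial B(y,r)) = 0$; this is still a fine cover (for each center only countably many radii are excluded), so Vitali--Besicovitch applies directly and no post-hoc perturbation is needed. Alternatively, \emph{enlarge} rather than shrink: the finitely many disjoint closed balls are at positive pairwise distance, so there is room to increase each radius to a nearby continuity point while keeping the family disjoint and the radii below $s_l$, so that the uniform subcritical bound from your first step still applies.
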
   
We proceed to prove Theorem Theorem \ref{thm:Concentration}: with the decomposition obtained in Lemma~\ref{lem:decompo}.  Let us denote $B_{k,j} = B(\hat{x}_k^j, r_j)$ and $E_k := \Real^d \setminus (\cup_{j = 1}^{N} B_{k,j})$. 
These balls allow us to decompose the potential energy as follows, 
\begin{align}
PE(u_k,u_k) = \sum_{1 \leq j,i \leq N} PE(u_k|_{B_{k,j}},u_k|_{B_{k,i}}) + \sum_{1 \leq i \leq N} PE(u_k|_{B_{k,i}^l}, u_k|_{E_k}) + PE(u_k|_{E_k}, u_k|_{E_k}). \label{eq:PEdecomp}
\end{align}
First consider the last term, using the pseudo-differential approximation, 
\begin{align}
PE(u_k|_{E_k}, u_k|_{E_k}) & = \int\int_{E_k\times E_k} \hspace{-25pt} u_k(x)u_k(y) K_H(x,y) dx dy - \int_{E_k} u_k(x) T_{E} L^{-1} (u_k|_{E_k})(x) dx.  \label{ineq:pek}
\end{align}
The error term is controlled by Lemma \ref{lem:subcriticalT_E}, 
\begin{equation*}
\int_{E_k} u_k(x) T_{E} L^{-1} (u_k|_{E_k})(x) dx \leq C_E\norm{u_k|_{E_k}}_m^{2\theta} \leq C_E\norm{u_k}_m^{2\theta},
\end{equation*}
where $C_E$ is the implicit constant in \eqref{ineq:TEc} times $\norm{u}_1^{2-2\theta}$.
Now we turn to the leading order term in \eqref{ineq:pek}. 
Using Lemma \ref{lem:AsympExpan}, for all $\tilde{\epsilon} > 0$, there is an $\eta > 0$ 
such that  
\begin{align*} 
\int\int_{E_k\times E_k} \hspace{-25pt} u_k(x)u_k(y) K_H(x,y) dx dy  & = \int\int_{E_k\times E_k \cap \set{\abs{x-y} < \eta}} \hspace{-25pt} u_k(x)u_k(y) K_H(x,y) dx dy \\ & \quad + \int\int_{E_k \times E_k \cap \set{\abs{x-y} \geq \eta}} \hspace{-25pt} u_k(x)u_k(y) K_H(x,y) dx dy \\ 
& \leq (c_d + \tilde{\epsilon})\int\int_{E_k \times E_k \cap \set{\abs{x-y} < \eta}}\frac{1}{a(x)} u_k(x)u_k(y)\abs{x-y}^{2-d} dx dy \\ & \quad + \sup_{x,y \in \Real^d} \sup_{\abs{x-y} \geq \eta}\abs{K_H(x,y)}\norm{u}^2_1.   
\end{align*}
Denote 
\begin{equation}\label{constant}
K(\tilde{\epsilon}) := \sup_{x,y \in \Real^d} \sup_{\abs{x-y} \geq \eta}\abs{K_H(x,y)}\norm{u}^2_1.
\end{equation} 
For this step it suffices to take $\tilde{\epsilon} = 1$. 
Focusing now on the first term, cover $\Real^d$ by cubes of unit volume, denoted $\set{Q_i}_{i = 1}^\infty = \mathcal{Q}$.  
By construction (Lemma \ref{lem:decompo} (2) and Lemma \ref{lem:CC} (iv)), there is a $k_0$ sufficiently large such that 
\begin{equation}
\sup_{k \geq k_0} \sup_{Q \in \mathcal{Q}} \int_{Q} u_k|_{E_k}  dy < 4\delta. \label{ineq:ekukVconcentration}
\end{equation}
Hence, by \eqref{ineq:Gconvo} and H\"older's inequality, 
\begin{align*}
\int\int_{E_k \times E_k \cap \set{\abs{x-y} < \eta}}\frac{1}{a(x)} u_k(x)u_k(y)\abs{x-y}^{2-d} dx dy & \\ & \hspace{-5cm} = \sum_{Q_i,Q_j \in \mathcal{Q}} \int\int_{Q_i \times Q_j \cap E_k \times E_k \cap \set{\abs{x-y} < \eta}}\frac{1}{a(x)} u_k(x)u_k(y)\abs{x-y}^{2-d} dx dy  \\ 
& \hspace{-5cm} \lesssim \sum_{Q_i,Q_j \in \mathcal{Q}} \norm{u_k|_{E_k \cap Q_i}}_1^{1/d}\norm{u_k|_{E_k \cap Q_j}}_1^{1/d} \norm{u_k|_{E_k \cap Q_i}}_m^{m/2}\norm{u_k|_{E_k \cap Q_j}}_m^{m/2} \\ 
& \hspace{-5cm}\leq K_1\delta^{2/d}\norm{u_k}_m^m,   
\end{align*}
for some constant $K_1 > 0$ which depends only on dimension and $\bar{a}$. 
Putting the previous estimates together we have shown that,  
\begin{align}
PE(u_k|_{E_k},u_k|_{E_k}) \leq K_1\delta^{2/d}\norm{u_k}_m^m + C_E\norm{u_k}_m^{2\theta} + K(1). \label{ineq:pekDone}
\end{align}
Let us now handle the second term in \eqref{eq:PEdecomp}. 
Using the pseudo-differential approximation, 
\begin{align}
PE(u_k|_{\cup_{j} B_{k,j}}, u_k|_{E_k}) & = \int\int_{ \cup_{j} B_{k,j} \times E_k} \hspace{-25pt} u_k(x)u_k(y) K_H(x,y) dx dy - \int_{\cup_{j} B_{k,j}} \hspace{-10pt} u_k(x) T_{E} L^{-1} (u_k|_{E_k})(x) dx.  \label{ineq:pekvBkim}
\end{align}
The error term is controlled by Lemma \ref{lem:subcriticalT_E}, 
\begin{align*}
\int_{\cup_{j} B_{k,j}} \hspace{-10pt} u_k(x) T_{E} L^{-1} (u_k|_{E_k})(x) dx & \leq C_E\norm{u_k|_{\cup_{j} B_{k,j}}}_m^{\theta}\norm{u_k|_{E_k}}_m^{\theta} \\
& \leq C_E\norm{u_k}_{m}^{2\theta}. 
\end{align*}
The leading order term in \eqref{ineq:pekvBkim} is estimated using Lemma \ref{lem:AsympExpan}, which implies for all $\tilde{\epsilon} > 0$, there is $\eta > 0$ such that  
\begin{align*} 
\int\int_{ \cup_{j} B_{k,j} \times E_k} \hspace{-25pt} u_k(x)u_k(y) K_H(x,y) dx dy  & = \int\int_{\cup_{j} B_{k,j} \times E_k \cap \set{\abs{x-y} < \eta}} \hspace{-25pt} u_k(x)u_k(y) K_H(x,y) dx dy \\ & \quad + \int\int_{\cup_{j} B_{k,j} \times E_k \cap \set{\abs{x-y} \geq \eta}} \hspace{-25pt} u_k(x)u_k(y) K_H(x,y) dx dy \\ 
& \leq (c_d + \tilde{\epsilon})\int\int_{\cup_{j} B_{k,j} \times E_k \cap \set{\abs{x-y} < \eta}}\frac{1}{a(x)} u_k(x)u_k(y)\abs{x-y}^{2-d} dx dy
 \\ & \quad + K(\tilde{\epsilon}), 
\end{align*}
where $K(\tilde\epsilon)$ is defined in \eqref{constant}. For this step it again suffices to take $\tilde{\epsilon} = 1$.  
By \eqref{ineq:Gconvo} and H\"older's inequality, 
\begin{align*}
\int\int_{\cup_{j} B_{k,j} \times E_k \cap \set{\abs{x-y} < \eta}}\frac{1}{a(x)} u_k(x)u_k(y)\abs{x-y}^{2-d} dx dy 
& \lesssim \norm{u_k|_{E_k}}_{1}^{1/d}\norm{u_k}_m^m. 
\end{align*}
Hence, the estimates put together imply, 
\begin{align} 
\sum_{j = 1}^N PE(u_k|_{B_{k,j}},u_k|_{E_k}) \leq K_2\delta^{1/d}\norm{u_k}_m^m + C_E\norm{u_k}_m^{2\theta} + K(1), \label{ineq:BkE}
\end{align}
for some fixed positive constant $K_2=K_2(d,M,\bar{a})$.  

Now we turn to the first term in \eqref{eq:PEdecomp}. 
Firstly, if $j \neq i$ then by arguments similar to above, using that the balls $B_{k,j}^l$ are disjoint for $k$ sufficiently large,    
\begin{align} 
PE(u_k|_{B_{k,j}},u_k|_{B_{k,i}}) \leq C_E\norm{u_k}_m^{2\theta} + K_3, \label{ineq:BkBj}
\end{align}
for some $K_3 > 0$ which depends on $M$, $d$, $a$ and the minimal distance between $B_{k,j}$ and $B_{k,i}$ which is bounded below by Lemma \ref{lem:decompo} (3). 
Hence, consider the case $i = j$ and for notational simplicity refer to $B_k := B_{k,j}$.  
Estimate now as above, 
\begin{align*}
PE(u_k|_{B_k},u_k|_{B_k}) & = \int\int_{B_k \times B_k} u_k(x) u_k(y) K_H(x,y) dx dy - \int_{B_k} u_k(x) T_E L^{-1}(u_k|_{B_k}) dx. 
\end{align*}
By Lemma \ref{lem:subcriticalT_E}, 
\begin{align*}
\int_{B_k} u_k(x) T_E L^{-1}(u_k|_{B_k}) dx \leq C_E\norm{u_k}_m^{2\theta}.  
\end{align*}
By Lemma \ref{lem:AsympExpan}, for all $\tilde{\epsilon} > 0$ there exists an $\eta = \eta(\tilde{\epsilon}) > 0$ such that 
\begin{align*}
\int\int_{B_k \times B_k} \hspace{-25pt} u_k(x)u_k(y) K_H(x,y) dx dy & = \int\int_{(B_k \times B_k) \cap \set{\abs{x-y} < \eta}} \hspace{-25pt} u_k(x)u_k(y) K_H(x,y) dx dy \\ & \hspace{8pt} + \int\int_{(B_k \times B_k) \cap \set{\abs{x-y} \geq \eta}} \hspace{-25pt} n_k(x)n_k(y) K_H(x,y) dx dy \\ 
& \leq (c_d + \tilde{\epsilon})\int\int_{(B_k \times B_k) \cap \set{\abs{x-y} < \eta}}\frac{1}{a(x)} u_k(x)u_k(y)\abs{x-y}^{2-d} dx dy \\ & \hspace{8pt} + K(\tilde{\epsilon}). 
\end{align*}  
Write
\begin{equation*} 
\bar{a}_{j} := \liminf_{k \rightarrow \infty} \min_{x \in B_k} a(x). 
\end{equation*}
By the sharp HLS, Theorem \ref{thm:SharpHLS}, 
\begin{align*}
\int\int_{(B_k \times B_k) \cap \set{\abs{x-y} < \eta}}\frac{1}{a(x)} u_k(x)u_k(y)\abs{x-y}^{2-d} dx dy & \leq \frac{C_\star}{\bar{a}_{j}}\norm{u_k|_{B_k}}^{2/d}\norm{u_k|_{B_k}}_m^m. 
\end{align*}
Hence, 
\begin{align}
PE(u_k|_{B_k},u_k|_{B_k}) \leq \frac{C_\star}{2 \bar{a}_{j}}(c_d + \tilde{\epsilon})\norm{u_k|_{B_k}}_1^{2/d}\norm{u_k|_{B_k}}_m^m + C_E\norm{u_k}_m^{2\theta} + K(\tilde{\epsilon}). \label{ineq:BkBk}
\end{align}
By Lemma \ref{lem:decompo} (1) we may choose $\tilde{\epsilon}$ sufficiently small depending on $\epsilon$ and $k$ sufficiently large to ensure that, for all $j$, 
\begin{equation}
K_\star := \min_{1 \leq j \leq N} \left( \frac{1}{m-1} - \frac{C_\star}{2 \bar{a}_j} (c_d + \tilde{\epsilon}) \norm{u_k|_{B_{k,j}}}_1^{2/d}\right) > c(a,\e)>0. \label{def:Kstar}
\end{equation}
This in turn fixes $K_4 := K(\tilde{\epsilon})$. 
Notice that $K_\star$ depends only on dimension, $a$ and $\epsilon$.

Applying \eqref{ineq:pekDone}, \eqref{ineq:BkE}, \eqref{ineq:BkBj}, \eqref{ineq:BkBk} and \eqref{def:Kstar} to \eqref{eq:PEdecomp} we have, 
\begin{align*}
\F(u_0) \geq \F(u_k) & = \frac{1}{m-1} \int u^m_k dx - PE(u_k,u_k) \\ 
& \geq \left(K_\star - K_1\delta^{1/d} - K_2\delta^{2/d}\right)\norm{u_k}_m^m - (N^2 + N + 2)C_E\norm{u_k}_m^{2\theta} - 2K(1) - K_3 N^2 - NK_4. 
\end{align*}
Since $K_\star$ depends only on $\epsilon$ and $K_1,K_2$ are fixed constants which do not depend on the decomposition, $\delta=\delta(a,\e)$ can be chosen a priori sufficiently small such that there exists a constant $\tilde{K}_\star > 0$,
\begin{align}
\F(u_0) \geq \tilde{K}_\star\norm{u_k}_m^m - (N^2 + N + 2)C_E\norm{u_k}_m^{2\theta} - 2K(1) - N^2 K_3 -NK_4. \label{ineq:FboundBelow}
\end{align}
Note that the choice of $\delta$ then fixes the decomposition given by Lemma \ref{lem:decompo} (and hence $N$).   
As $\F(u_0)$ is a fixed finite number and $2\theta < m$, this estimate implies that $\norm{u_k}_m$ is uniformly bounded for $k$ sufficiently large.
However, note that \eqref{ineq:FboundBelow} is generally vacuous unless $\norm{u_k}_m$ is extremely large. 
Regardless, \eqref{ineq:FboundBelow} implies that $\limsup_{k \rightarrow \infty}\norm{u_k}_m < \infty$ which, by the continuation criterion in Theorem \ref{thm:loc_theory}, implies that $u$ cannot blow-up at $T_+$. 
\end{proof}

We now prove Lemma \ref{lem:decompo}.  
\begin{proof} 
Let $u_k^l, e_k, u_k^V$ as given in Lemma~\ref{lem:CC} with $1\leq l\leq L$. For simplicity we only treat the case $L = 1$, as we may simply append resulting decompositions in order to treat $L > 1$. 
Denote $\tilde{n}_k^1 := u^1_k(x + x_k^1)$ and up to extraction of a subsequence, there exists some measure $n^\star$ such that $\tilde{n}_k^1 \rightharpoonup^\star n^\star$. 
Define the set $E_1 := \supp n^\star \subset B(0,R^1)$. 
By the assumption \eqref{def:epsilonnstar}, for every $x \in E_1$ there exists an $r_x > 0$ 
which satisfies: 
\begin{itemize}
\item[(a)] $n^\star(B(x,r_x)) < \left(\min_{ \abs{y-x} < r_x} a(y - x_k^l) \right)^{d/2}M_c^\star - \epsilon$,  
\item[(b)] $r_x$ is a point of continuity of the map $r \mapsto n^\star(B(x,r))$ (which is continuous a.e. since it is non-decreasing). 
\end{itemize} 
Denote this covering of $E_1$ by $\mathbf{B}^1$. 
Notice that in order to satisfy (a) given the assumption \eqref{def:epsilonnstar}, the radius of the balls must be chosen small enough such that $a(x)$ is sufficiently close to $a(y)$ for $\abs{x-y} < r_x$ (which requires the assumption of uniform modulus of continuity of $a(x)$). 
It now follows by the Besicovitch covering theorem (Theorem 2.7 \cite{Mattila}) that there exists a finite or countable 
set of balls $\mathcal{B}^1 \subset \mathbf{B}^1$ which satisfies:  
\begin{itemize}
\item[(i)] $\mathbf{1}_{E_1} \leq \sum_{B \in \mathcal{B}^1} \mathbf{1}_{B} \leq P(d),$ where $P(d)$ is a dimensional constant.
\item[(ii)] There exists $Q(d)$ families of disjoint balls $\mathcal{B}^1_1,...,\mathcal{B}^1_{Q(d)} \subset \mathcal{B}^1$ such that $E_1 \subset \cup_{i = 1}^{Q(d)}\cup \mathcal{B}^1_i$. Here again, $Q(d)$ is a dimensional constant.
\end{itemize}
Since the collections $\mathcal{B}^1_i$ define a covering of $E_1$ we have also 
\begin{equation*}
n^\star(E_1) \leq \sum_{i =1}^{Q(d)} \sum_{B \in \mathcal{B}^1_i} n^\star(B),    
\end{equation*}
which implies that one of the collections, without loss of generality suppose $\mathcal{B}^1_1$, satisfies
\begin{equation*}
\frac{n^\star(E_1)}{Q(d)} < \sum_{B \in \mathcal{B}^1_1} n^\star(B). 
\end{equation*}
We may truncate this sum to find a finite collection of $B \in \mathcal{B}^1_1$, labeled $B^1_{1,i}$ for $i \in \set{1,...,N_1}$ which instead satisfies
\begin{equation*}
\frac{n^\star(E_1)}{2Q(d)} \leq \sum_{i = 1}^{N_1} n^\star(B^1_{1,i}). 
\end{equation*} 
Now we describe how to carry the $m$-th step of the algorithm to the $m+1$-st and justify why the algorithm generates the decomposition stated in Lemma \ref{lem:decompo} in finitely many steps. 
Suppose we are given a set $E_{m+1} \subset B(0,R^1)$ which is defined by 
\begin{equation*}
E_{m+1} := E_1 \setminus \cup_{k = 1}^m \cup_{i = 1}^{N_k} B_{1,i}^k,  
\end{equation*}
where  we assume that the balls $\set{B_{1,i}^k}$, $1 \leq i \leq N_k$, $1 \leq k \leq m$ are disjoint and chosen such that 
\begin{equation}
n^\star(E_{m+1}) \leq \left(1 - \frac{1}{2Q(d)}\right)^{m}n^\star(E_1) = \left( 1- \frac{1}{2Q(d)} \right)n^\star(E_{m}). \label{ineq:geometric}
\end{equation}
From \eqref{ineq:geometric} it is clear that if $m$ is sufficiently large depending only on $n^\star(E_1)$, then $n^\star(E_{m+1}) < \delta$ and, in particular, 
\begin{equation*}
n^\star( \Real^d \setminus \cup_{k = 1}^m \cup_{i = 1}^{N_k} B_{1,i}^k ) = n^\star( E_{m+1} )  < \delta. 
\end{equation*}
Hence, the primary remaining step is to see that we really can progress from step $m$ to $m+1$. 
To this end, define a covering $\mathbf{B}^{m+1}$ of $E_{m+1}$ by balls $B(x,r_x)$ for all $x\in E_{m+1}$ which satisfy: 
\begin{itemize}
\item[(a)] $n^\star(B(x,r_x)) < \left(\min_{ \abs{y-x} < r_x} a(y + x_k^l) \right)^{d/2}M_c^\star - \epsilon$.
\item[(b)] $r_x$ is a point of continuity of the map $r \mapsto n^\star(B(x,r))$.
\item[(c)] $B(x,r_x) \cap \cup_{k = 1}^{m} \cup_{i = 1}^{N_k} B_{1,i}^k = \emptyset$. 
\end{itemize} 
To achieve the condition (c), generally $r_x$ must be taken small, however it is possible since the balls in question are closed. 
Given the covering $\mathbf{B}^{m+1}$, the Besicovitch covering theorem implies that we may choose a finite or countable subset $\mathcal{B}^{m+1} \subset \mathbf{B}^{m+1}$ which satisfies properties analogous to (i) and (ii) above, and moreover satisfies the additional property that $\mathcal{B}^{m+1}$ is disjoint from $B_{1,i}^k$.  
Given the sets chosen in (ii), we may again assume without loss of generality that, 
\begin{equation*}
\frac{n^\star(E_{m+1})}{Q(d)} \leq \sum_{B \in \mathcal{B}_1^{m+1}} n^\star(B),   
\end{equation*}
and furthermore that we may truncate this sum to be a finite number of balls labeled $B_{1,i}^{m+1}$ with $1 \leq i \leq N_{m+1}$ for some finite $N_{m+1}$ such that 
\begin{equation*}
\frac{n^\star(E_{m+1})}{2Q(d)} \leq \sum_{i = 1}^{N_{m+1}} n^\star(B_{1,i}^{m+1}).    
\end{equation*}
Appending $\set{B_{1,i}^{m+1}}$ to the existing list and defining $E_{m+2}$ by 
\begin{equation*}
E_{m+2} := E_1 \setminus \cup_{k = 1}^{m+1} \cup_{i = 1}^{N_k} B_{1,i}^k,  
\end{equation*}
we see that indeed we have advanced the algorithm to step $m+1$. 
By the argument above we saw that the algorithm terminates at some finite $M < \infty$. 
Hence define $N = \sum_{i = 1}^M N_i$ and re-index the balls $B_{1,i}^k$ to $\tilde{B}_j$, $1 \leq j \leq N$. 
Each ball $\tilde{B}_j$ has center $x_j \in B(0,R^1)$ with radius $r_j > 0$. We set $\hat{x}_k^j := x_j + x_k^1$.    
Since each $r_j$ is chosen as a point of continuity, (1) and (2) in the statement of the Lemma are satisfied by $u_k$ for $k$ sufficiently large. This can be verified by continuously approximating $\mathbf{1}_{B(x_j,r_j)}$ and using the fact that $\tilde{n}_k^1 \rightharpoonup^\star n^\star$. 
and $\limsup_{k \rightarrow \infty}\norm{e_k}_1 < \delta$.
Finally, observe that (3) in the statement of the Lemma is satisfied by construction and Lemma \ref{lem:CC}.
\end{proof}

\section{Finite Time Blow-Up} \label{sec:FTBU}
As discussed in the introduction, the inability to make obvious use of a virial method motivates our use of a barrier method based on maximum principle-type arguments.  We begin with the following rescaling: let $M_c < M_0 < M(u_0)$ be as in the statement of Proposition \ref{prop:InitialMassConcen}, define
 \begin{equation}
 \mu := M_c/M_0 < 1
 \end{equation}
  and let  
\begin{equation} 
\rho(t,x) := \mu u(\mu^{1-2/d}t,x). \label{def:resc}
\end{equation} 
Then $M(\rho) = M(u)\mu> M_c$ and $\rho$ solves 
\begin{equation} \label{def:ADD_rescaled} 
\left\{
\begin{array}{l}
  \rho_t + \grad \cdot (\rho  \mu^{1-2/d} \grad c) = \Delta \rho^{2-2/d} \\
  -\grad \cdot (a(x)\grad c) = \mu^{-1}\rho. 
\end{array}
\right.
\end{equation}

\vspace{10pt}

As mentioned above, we will use mass comparison arguments involving suitably chosen barriers (sub-solutions) to force the finite time concentration of mass in \eqref{def:ADD_rescaled}. 
We define the following comparison function $\bar{u} = \bar{u}(t,x)$ with $R_0$ as in Proposition \ref{prop:InitialMassConcen}, 
\begin{equation}
\bar{u}(t,x) = \frac{a(R_0)^{d/2}}{R(t)^d}V\left( \frac{x}{R(t)} \right),
\end{equation}
where we take $R(t)$ as a solution to the initial value problem
\begin{equation} \label{def:Reqn}
\left\{
\begin{array}{l}
  \dot{R}(t) = \frac{M_c(1- \mu^{-2/d})}{a(R_0)\sigma R(t)^{d-1}} \\
  R(0) = R_0.
\end{array}
\right.
\end{equation}
Here $\sigma$ denotes the surface area of the unit sphere.
Notice that $\dot{R} \leq 0$ and that $R(T_\star) = 0$ with
\begin{equation}
T_\star:=T_\star (M_0,R_0, d) = \frac{R_0^d\sigma}{da(R_0)M_c(\mu^{-2/d}-1)} < \infty. \label{sec4_blowuptime}
\end{equation}
We define the mass distributions 
\begin{equation*}
M(t,r) = \int_{\abs{x} \leq r} \rho(t,x) dx, \;\;\; \overline{M}(t,r) = \int_{\abs{x} \leq r} \bar{u}(t,x) dx.  
\end{equation*}
Notice that \eqref{ineq:massorderedic} is equivalent to $\overline{M}(r,0) \leq M(r,0)$, 
which means that the rescaled initial data is initially more concentrated than the barrier $\bar{u}$ on the neighborhood $r \leq R_0$.  
It is also important to note that the total mass of the barrier $\bar{u}$ is generally more than the critical mass $M_c$ but less than or equal to the total mass of $\rho$ which itself has less mass than the true solution $u$.  

Suppose that $\overline{M}(0,r)\leq M(0,r)$ for all $r\geq 0$. We will show that this ordering is preserved up to the blow-up time of $u$ or $\bar{u}$, e.g.
\begin{equation*}
\overline{M}(t,r) \leq M(t,r) \hbox{ for } 0\leq t<\min\left(T_\star, T_+(u_0)\right). \label{ineq:MassOrdered}
\end{equation*}
See Figure 2 for a graphical depiction of the relationship between $\bar{u}(t,x)$ and $u(t,x)$, which is very different from the way $M(t,r)$ and $\overline{M}(t,r)$ are related. 
\begin{center}
\begin{figure}[hbpt] \label{fig:Blowup}
\begin{picture}(150,150)(-100,0)
\put(60,45){$\bar{u}(t,x)$}
\put(90,120){$u(t,x)$}
\put(50,0){\scalebox{0.40}{\includegraphics{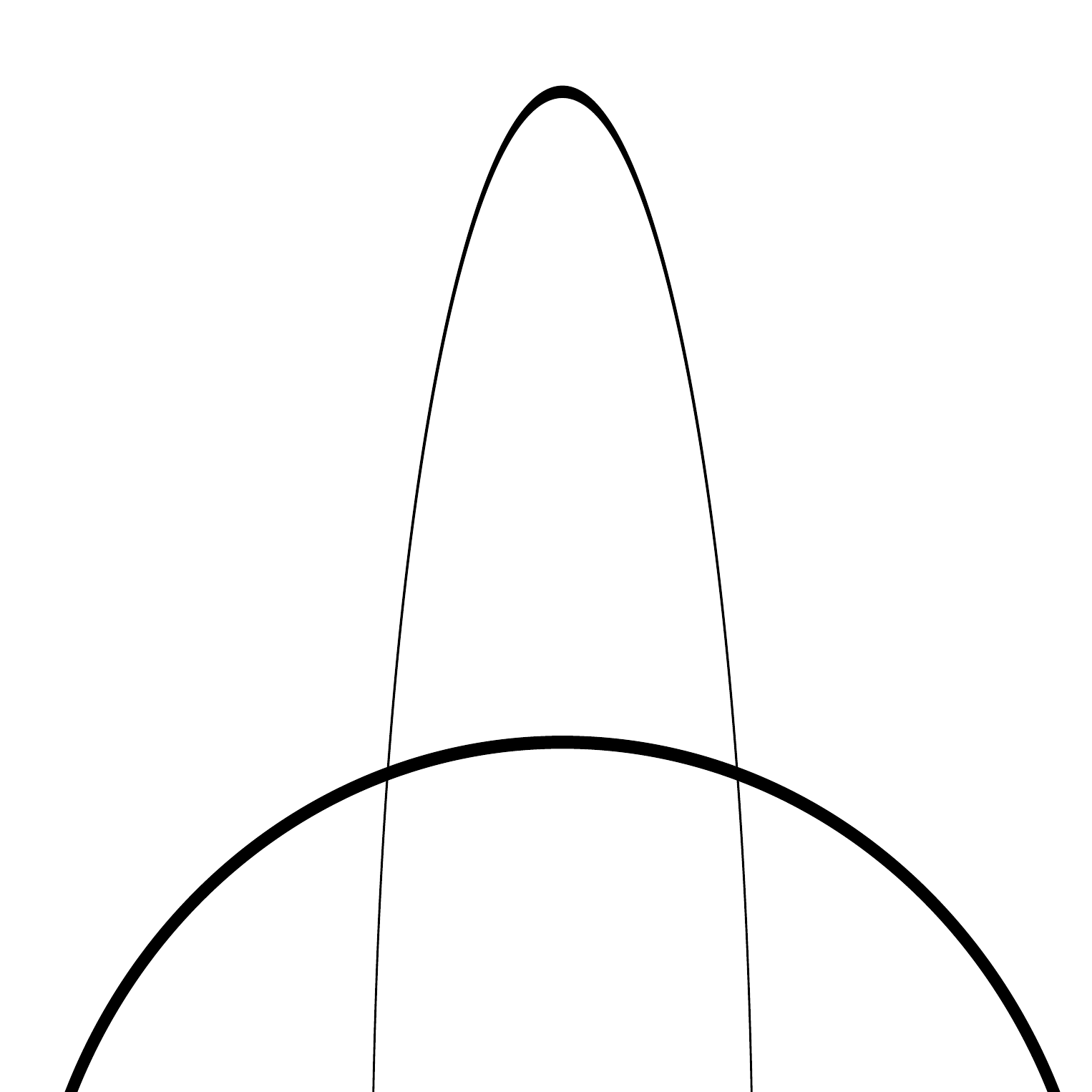}}}
\end{picture}
\caption{The mass subsolution $\bar{u}(t,x)$ and the real solution $u(t,x)$.}
\end{figure}
\end{center}

As alluded to above, in the language of maximum principle-type arguments, $\bar{u}$ plays the role of a {\it subsolution} in terms of the mass concentration.   As $\bar{u}$ concentrates into a delta mass at $t = T_\star$, we must have $T_+(u_0) \leq T_\star$, which will conclude the proof of Theorem \ref{thm:blowup}. 
The intuition for why the proof ultimately works is based on the fact that the rescaled system \eqref{def:ADD_rescaled} pulls mass into the origin faster than the PDE that $a^{d/2}(R_0)V$ solves (see Remark \ref{rmk:RescaledV}). 
That is, the rescaling \eqref{def:resc} transfers the property of having supercritical mass into surplus attractive power when compared against stationary solutions of roughly comparable mass. 
The surplus attractive power is what gives us the ability to choose $R(t)$ at the rate given in \eqref{def:Reqn} and hence prove that the solution is concentrating fast enough to be squeezed into blow-up by the self-similar barrier $\bar{u}$.    

\vspace{10pt}

We use mass comparison arguments influenced by those found in \cite{KimYao11} to prove that the mass ordering \eqref{ineq:MassOrdered} is preserved (see also \cite{BilerKarchEtAl06, JagerLuckhaus92, CieslakWinkler08}). As mentioned in the introduction, the novelty in our argument lies in the nature of our barriers which can handle near-critical mass accurately up till blow-up time. We should mention that, due to the inhomogeneity, the mass comparison principle that normally holds (in the sense of aforementioned references) does not apply.

\vspace{10pt}

A nontrivial complication arises at the free boundary of the positivity set of $u(t)$. Here, classical regularity breaks down and the mass comparison arguments no longer provide a rigorous argument.
To deal with this technical issue we lift to strictly positive solutions, which are smooth due to uniform parabolicity on compact sets.
Comparison with vanishing error is proved against these solutions, for which the formal arguments are rigorous. 
Passing to the limit requires some standard approximation arguments regarding the stability of \eqref{def:ADD_rescaled}.      
In order to prove positive solutions remain positive, we adopt viscosity solution-type arguments similar to those used in \cite{KL} (see Appendix for more information).  

\subsection{Preliminaries: Approximation and Regularity} 
In this section we detail the important approximation and regularity properties of \eqref{def:ADD}. 
These results are more or less expected, but since they are of independent interest and important for making our arguments rigorous, we include brief sketches of the proofs.

\begin{lemma}\label{lem:regularity1}
Suppose $u_0 \in C^0(\Real^d) \cap L_+^1(\Real^d;(1+\abs{x}^2)dx)$ and let $u(t)$ be the associated weak solution which satisfies $u(0) = u_0$.
 Then for all $\epsilon > 0$,  
\begin{itemize}
\item[(a)] $\grad c$ is continuous and bounded, and $\Delta c$ is bounded on $t \in [0,T_+(u_0) - \epsilon)$. 
\item[(b)]$u(t,x) \in C^{0}([0,T_+(u_0)-\epsilon)\times \Real^d)$.
\end{itemize}
\end{lemma}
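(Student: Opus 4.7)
The strategy is to bootstrap regularity of $c$ from the known $L^1 \cap L^\infty$ bounds on $u$ supplied by Theorem \ref{thm:loc_theory}, and then to use that regularity as input into the continuity theory for degenerate parabolic equations with bounded drift.

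\textbf{Step 1: Regularity of $c$ (part (a)).} On the interval $[0,T_+(u_0)-\epsilon]$, Theorem \ref{thm:loc_theory} gives $u \in L^\infty_t(L^\infty_x \cap L^1_x)$ uniformly, hence $u \in L^\infty_t(L^p_x)$ for every $p \in [1,\infty]$. I would first establish $c \in L^\infty$ uniformly in $t$: when $\gamma$ is strictly positive at infinity this follows from the elliptic maximum principle, while for $\gamma \equiv 0$ one combines the Hardy--Littlewood--Sobolev-type bound \eqref{ineq:HomogenCbound} (giving $c \in L^p$ for suitable $p$) with interior Calder\'on--Zygmund estimates and Morrey embedding. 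Next, Calder\'on--Zygmund theory applied to the uniformly elliptic operator $L$ with $C^\infty$ bounded coefficients yields $c \in W^{2,p}$ for every finite $p$, with bounds uniform in $t$. Taking $p > d$ and invoking Morrey's embedding gives $\nabla c \in C^\alpha_x$ uniformly in $t$, so $\nabla c$ is continuous and bounded. Finally, rewriting \eqref{def:Lc} pointwise as
\begin{equation*}
\Delta c \;=\; \frac{-u + \gamma(x)\, c - \grad a(x) \cdot \grad c}{a(x)},
\end{equation*}
and using the strict positive lower bound on $a$ together with the $L^\infty$ bounds on $u$, $c$, and $\grad c$, I conclude that $\Delta c$ is bounded.

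\textbf{Step 2: Continuity of $u$ (part (b)).} The equation $u_t = \Delta u^m - \grad \cdot (u \grad c)$ is a quasilinear degenerate parabolic equation of porous-medium type whose drift $\grad c$, by Step 1, is bounded and H\"older continuous in $x$, uniformly in $t$. With $u \in L^\infty$ and $u_0 \in C^0(\Real^d)$, I would invoke the intrinsic-scaling/De Giorgi continuity theory for porous-medium equations with bounded drift (as developed by DiBenedetto and adapted to the PKS setting in \cite{KL}), which yields H\"older continuity of $u$ on compact subsets of $[0,T_+(u_0)-\epsilon) \times \Real^d$, with modulus depending only on $\norm{u}_\infty$, $\norm{\grad c}_\infty$, and the modulus of continuity of $u_0$. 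As an alternative, one can approximate by strictly positive, uniformly parabolic regularizations (replacing $u^m$ by $u^m + \delta u$) where classical parabolic Schauder/Krylov--Safonov theory gives $C^0$ estimates, and pass to the $\delta \to 0$ limit using uniform moduli.

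\textbf{Main obstacle.} Part (a) is standard elliptic regularity once one is careful about the global-in-space behavior of $c$ when $\gamma \equiv 0$. The substantive difficulty lies in part (b): the degeneracy of the diffusion at $\{u=0\}$ rules out classical uniformly parabolic regularity, and one must either appeal to the more delicate H\"older continuity theory for PME-type equations with drift or carry out a careful approximation by strictly positive solutions (the latter being precisely the approach foreshadowed in the paper's Lemma \ref{lem:full_approx} and the viscosity-solution framework of \cite{KL}).
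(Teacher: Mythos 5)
Your proposal is correct and follows essentially the same route as the paper: part (a) via standard elliptic ($L^p$/Calder\'on--Zygmund/Sobolev) regularity for the uniformly elliptic operator $L$, and part (b) via DiBenedetto's interior continuity theory for degenerate parabolic equations with bounded drift, combined with approximation by strictly positive solutions. The paper simply omits the details of (a) "for brevity" and for (b) cites Theorem 1.1 of DiBenedetto \cite{DiBenedetto83} directly, emphasizing that the key approximation hypothesis (their (A4)) is supplied by the local existence theory of \cite{BRB10}; your two suggested routes for (b) are in fact two sides of the same argument, since verifying (A4) is exactly the strictly-positive approximation you describe.
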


\begin{proof}
\begin{itemize}
\item[(a)] The proof follows from standard elliptic regularity estimates and is omitted for brevity. 
\item[(b)] By (a), our PDE $u_t -\Delta u^{2-2/d}+\nabla\cdot(u\nabla c)=0$ can be viewed as a degenerate diffusion with a priori given drift term $c(x)$, which  satisfies the assumptions (A1)-(A3) of DiBenedetto \cite{DiBenedetto83}. In addition, from the proof of local existence theory (see  \cite{BRB10}), the approximation assumption (A4) is satisfied (i.e. $u$ can be approximated locally uniformly with the smooth solutions $u_n$ with strictly positive initial data). Therefore Theorem 1.1 of DiBenedetto \cite{DiBenedetto83} yields (b) (also see Theorem 3.1 of \cite{KimYao11}). 
\end{itemize}
\end{proof}

\begin{lemma}[Regularity of Strictly Positive Solutions] \label{lem:regularity2}
Let $u_0 \in L_+^1(\Real^d;(1+\abs{x}^2)dx) \cap L^\infty(\Real^d)$ be strictly positive a.e.. Then for all $\epsilon > 0$, the associated weak solution $u(t)$ with $u(0) = u_0$ 
is smooth and strictly positive on $(0,T_+(u_0)-\epsilon) \times \Real^d$. 
\end{lemma}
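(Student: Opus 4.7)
The plan is to combine two ingredients: a strict positivity result obtained via viscosity-solution-type comparison, and standard quasilinear parabolic regularity on domains where the equation is uniformly parabolic. Throughout, fix an arbitrary $\epsilon > 0$ and work on the cylinder $Q := [0, T_+(u_0) - \epsilon) \times \Real^d$.

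First I would establish strict positivity: the claim is that $u(t,x) > 0$ pointwise on $(0, T_+(u_0)-\epsilon) \times \Real^d$. By Lemma \ref{lem:regularity1}(a), $\grad c$ is continuous and bounded on $Q$ and $\Delta c$ is bounded. Rewriting \eqref{def:ADD} as
\begin{equation*}
u_t - \Delta u^m + \grad c \cdot \grad u + (\Delta c)\, u = 0,
\end{equation*}
we may treat $u$ as a viscosity (super)solution of a porous-medium-type equation with an \emph{a priori} continuous bounded drift. Under this setup the machinery developed in \cite{KL}, together with the comparison principle for such degenerate diffusion equations with drift, allows one to construct smooth, strictly positive, radially symmetric sub-barriers $\underline{u}_\delta$ (built, for example, by solving an ODE for the radius of the positivity set and rescaling a fundamental porous medium profile from below) satisfying $\underline{u}_\delta(0,\cdot) \leq u_0$ and $\underline{u}_\delta > 0$ on $[0, T_+(u_0) - \epsilon] \times B_R$ for every $R$. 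Comparison (deferred to the Appendix) then yields $u \geq \underline{u}_\delta > 0$ on each compact subset of $Q$, and hence $u(t,x) > 0$ on $(0, T_+(u_0)-\epsilon)\times\Real^d$.

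With strict positivity in hand, fix any compact set $K \subset (0, T_+(u_0)-\epsilon)\times\Real^d$. By the continuity of $u$ (Lemma \ref{lem:regularity1}(b)) and the positivity just proved, there exists $\delta_K > 0$ with $u \geq \delta_K$ on $K$. On such a set the nonlinear diffusion $\Delta u^m = \diverg(m u^{m-1} \grad u)$ has ellipticity bounded below by $m \delta_K^{m-1} > 0$, so the equation becomes a uniformly parabolic quasilinear equation with bounded drift $\grad c$ and bounded zeroth-order coefficient $\Delta c$ (from Lemma \ref{lem:regularity1}(a)). Standard quasilinear parabolic theory (de Giorgi–Nash–Moser to obtain $C^\alpha$ regularity of $u$, then $c \in C^{2,\alpha}$ from elliptic Schauder, and finally Schauder bootstrapping for parabolic equations) then yields $u \in C^{\infty}$ on the interior of $K$. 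Since $K$ was arbitrary and $\epsilon > 0$ arbitrary, $u$ is smooth on $(0, T_+(u_0))\times\Real^d$.

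The main obstacle is the first step: weak solutions to the porous-medium-with-drift equation are not automatically classical, and one cannot invoke a naive maximum principle directly because of the degeneracy at $u=0$. The essential ingredient is the viscosity-solution framework of \cite{KL}, which provides the comparison principle against the strictly positive sub-barrier; the paper places this argument in the Appendix. Once positivity is secured, the bootstrap to $C^\infty$ is entirely standard and poses no difficulty.
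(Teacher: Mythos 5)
Your proposal is correct and takes essentially the same approach as the paper's proof: strict positivity is established via viscosity-solution comparison against porous-medium-type sub-barriers (deferred to the appendix, exactly the content of Lemma~\ref{positivity}, where the paper uses a drift-corrected Barenblatt profile), and smoothness then follows from classical quasilinear parabolic regularity on the uniformly parabolic positivity set, with the paper simply citing \cite{LadySoloUral} where you spell out the de~Giorgi--Nash--Moser and Schauder bootstrap steps explicitly.
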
 

\begin{proof}
Due to Lemma \ref{positivity} in the appendix, $u$ stays strictly positive for all $0\leq t< T_+(u_0)$. Consequently $u$ is a solution of a uniformly parabolic quasilinear PDE of divergence form, and the regularity of $u$ follows from classical regularity theory \cite{LadySoloUral}.
\end{proof}

\begin{lemma}[Stability of the blow-up time] \label{lem:full_approx}
Let $\set{u_0^n} \subset L_+^1(\Real^d;(1+\abs{x}^2)dx) \cap L^\infty(\Real^d)$ and $u_n(t)$ denote the associated weak solutions of \eqref{def:ADD} with $u_n(0) = u_0^n$ defined on the intervals $[0,T_+(u_0^n))$. 
Suppose further that
\begin{itemize}
\item[(a)] $\sup_{n} \left(\norm{u_0^n}_\infty + \norm{u_0^n}_1\right)  < \infty$ and  
\item[(b)] $u_0^n \rightarrow u_0$ strongly in $L^1$ for some $u_0 \in L_+^1(\Real^d;(1+\abs{x}^2)dx) \cap L^\infty(\Real^d)$. 
\end{itemize}

Let $u(t)$ be the solution to \eqref{def:ADD} with $u(0) = u_0$, and define $T_0 > 0$ such that,
\begin{equation*}
T_0 := \sup \set{ T \in (0,\infty) : \liminf_{n\rightarrow \infty} \sup_{t \in [0,T]}\norm{u_n(t)}_\infty < \infty}.     
\end{equation*}
Then for all $T < T_0$, there exists a subsequence such that $u_{n_k} \rightarrow u$ in $C([0,T],L^p(\Real^d))$ for all $1 \leq p <\infty$. 
If additionally $u_0^n \in C^0(\Real^d)$ and $u_0^n \rightarrow u_0$ locally uniformly, then we also have $u_{n_k} \rightarrow u$ locally uniformly on $[0,T] \times \Real^d$.
Moreover, $T_+(u_0) = T_0 \leq \liminf_{n \rightarrow \infty}T_+(u_0^n)$. 
\end{lemma}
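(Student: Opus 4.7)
The plan is to proceed in three stages: extract uniform estimates on $\{u_n\}$ up to any time $T<T_0$, pass to the limit and identify it via uniqueness, and finally pin down the identity $T_+(u_0)=T_0$. First I would fix $T<T_0$ and, by definition of $T_0$, extract a subsequence (not relabeled) along which $\sup_{t\in[0,T]}\norm{u_n(t)}_\infty<\infty$. Combined with $\sup_n \norm{u_0^n}_1<\infty$ and the standard second-moment estimate (which in $d\geq 3$ follows by testing the PDE against $\abs{x}^2$ and using $\grad c_n\in L^\infty$ via elliptic regularity from the uniform $L^1\cap L^\infty$ bound), this gives tightness of $\{u_n(t)\}$ uniform in $t\in[0,T]$. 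The energy dissipation inequality \eqref{ineq:EnrDiss} then yields uniform bounds on $u_n^m$ in $L^2(0,T;\dot H^1)$ and on $u_n\grad c_n$ in $L^2((0,T)\times\Real^d)$, which, inserted in the weak formulation, bound $\partial_t u_n$ in $L^2(0,T;\dot H^{-1})$.

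Next, an Aubin--Lions-type argument yields a further subsequence converging strongly to some $\tilde u$ in $L^p_{loc}((0,T)\times\Real^d)$ for $1\leq p<\infty$; the uniform $L^\infty$ bound and the tightness upgrade this to $\tilde u\in C([0,T];L^p(\Real^d))$ for each $p\in[1,\infty)$. Uniform elliptic regularity for the chemo-attractant PDE gives $\grad c_n\to\grad\tilde c$ in appropriate norms, so passage to the limit in the weak formulation is justified and $\tilde u$ is a weak solution with $\tilde u(0)=u_0$. The uniqueness statement of Theorem \ref{thm:loc_theory} then forces $\tilde u=u$ on $[0,T]$, so in particular $T_+(u_0)>T$ and $u_n\to u$ in $C([0,T];L^p)$ along the subsequence. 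A diagonal extraction over $T\nearrow T_0$ yields the stated convergence on any $[0,T]$ with $T<T_0$. For the locally uniform statement, Lemma \ref{lem:regularity1}(b) combined with the uniform $L^\infty$ bound and uniform boundedness of the drifts $\grad c_n$ yields uniform interior H\"older estimates via the DiBenedetto theory invoked in its proof, so that Arzel\`a--Ascoli upgrades locally uniform convergence of $u_0^n\to u_0$ to locally uniform convergence on $[0,T]\times\Real^d$.

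It remains to establish $T_+(u_0)=T_0\leq\liminf_n T_+(u_0^n)$. The latter inequality is immediate: for any $T<T_0$, the subsequence extracted above satisfies $\sup_{[0,T]}\norm{u_n(t)}_\infty<\infty$, hence $T_+(u_0^n)>T$ along it, and taking $T\nearrow T_0$ gives $\liminf_n T_+(u_0^n)\geq T_0$. The previous paragraph already showed $T_+(u_0)\geq T_0$. For the reverse, suppose towards contradiction that $T_+(u_0)>T_0$; then $u\in L^\infty([0,T_0+\delta]\times\Real^d)$ for some $\delta>0$. The local existence theory in Theorem \ref{thm:loc_theory} provides a minimal existence time $\tau>0$ depending only on bounds for $\norm{\cdot}_1$ and $\norm{\cdot}_\infty$ of the initial data. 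Using the $C([0,T_0-\e];L^p)$ convergence established above together with the uniform $L^\infty$ bound on $u$ on $[0,T_0+\delta]$, one picks $\e$ small, covers $[T_0-\e,T_0+\delta/2]$ by finitely many intervals of length $\tau/2$, and restarts local existence on each, comparing against $u$ via uniqueness and $L^1$-continuous dependence. This shows $u_n$ is defined on $[0,T_0+\delta/2]$ with a uniform $L^\infty$ bound for all large $n$, contradicting the definition of $T_0$. This continuation-type step is the main obstacle, as it requires a quantitative version of the local theory in \cite{BR11} with the existence time controlled by $\norm{u_0}_1$ and $\norm{u_0}_\infty$ alone.
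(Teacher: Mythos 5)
Your proposal follows essentially the same route as the paper: extract a subsequence with a uniform $L^\infty$ bound on $[0,T]$ via the definition of $T_0$, use the energy dissipation inequality and an Aubin--Lions compactness argument (which the paper invokes by citation to the precompactness arguments of \cite{BRB10,BR11}) to obtain $C([0,T];L^p)$ convergence to a weak solution, identify the limit as $u$ by uniqueness, upgrade to locally uniform convergence through the DiBenedetto interior regularity theory, and finally pass to the identity $T_+(u_0)=T_0$ via a continuation argument. The chain $T_0 \leq \liminf_n T_+(u_0^n)$ and $T_+(u_0)\geq T_0$ are handled identically in both.

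Where the two differ is only in exposition of the last step. The paper's proof is terse here: it states that the locally uniform convergence forces $\liminf_{t\nearrow T_0}\norm{u(t)}_\infty = \infty$ when $T_0<\infty$, without expanding on the restart argument. You are right to flag this as the delicate point, and your sketch — restart the quantitative local theory near $T_0$ and compare against $u$ — is the intended mechanism. One caution worth recording, however: your invocation of ``$L^1$-continuous dependence'' is not by itself enough to control $\norm{u_{n_k}(T_0-\epsilon)}_\infty$ uniformly, since $L^1$ proximity to a bounded function does not imply an $L^\infty$ bound without some additional smoothing or equicontinuity input. What actually closes the loop is that the quantitative local theory gives a minimal existence time and a forward-in-time $L^\infty$ bound depending only on $\norm{\cdot}_1 \cap \norm{\cdot}_\infty$ of the restart data, combined with the fact that, by the definition of $T_0$, one already has a uniform-in-$k$ bound $\sup_{[0,T_0-\epsilon]}\norm{u_{n_k}}_\infty\leq C_\epsilon$ along the extracted subsequence. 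If $\sup_\epsilon C_\epsilon$ were finite, restarting would extend the uniform bound past $T_0$ and contradict its definition; hence $C_\epsilon\to\infty$, and the locally uniform convergence together with tightness (which requires a uniform second-moment bound, available from Theorem \ref{thm:loc_theory} in the settings where the lemma is applied) then transfers this blow-up to $u$. So the architecture is right, but the appeal to $L^1$-stability alone should be replaced by the quantitative restart combined with the definition of $T_0$ itself.
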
 

\begin{proof}
From the local existence theory and  the fact $\sup_n \left(\norm{u_0^n}_\infty + \norm{u_0^n}_1\right) < \infty$, it is assured that $T_0 > 0$ (see \cite{BRB10} or \cite{Blanchet09}).
Let $0 < T < T_0$. 
By the precompactness arguments of the local existence theory (see \cite{BRB10,BR11}) we may extract a subsequence $\set{u_{n_k}}$ which converges in $C([0,T];L^p(\Real^d))$ for all $1 \leq p < \infty$ to a weak solution, and by uniqueness of weak solutions, the limit must be $u(t)$.
In particular, we may extend $u(t)$ to include any time interval such that 
$$
\liminf_{n\rightarrow \infty} \sup_{t \in [0,T]}\norm{u_n(t)}_\infty < \infty.
$$
By the proof of Lemma \ref{lem:regularity1} and Theorem 6.1 of \cite{DiBenedetto83}, if $u_0^n \rightarrow u_0$ locally uniformly, it moreover follows that $u_{n_k}(t) \rightarrow u(t)$ locally uniformly up to extraction of an additional subsequence. 
By the continuation criterion in Theorem \ref{thm:loc_theory}, it is necessary that $T_0 \leq \liminf_{n \rightarrow \infty}T_+(u_0^n)$ 
but there is no a priori reason for them to be comparable (for instance, $T_0$ could be finite, but $T_+(u_0^n) \equiv \infty$). 
From the first part of the lemma, $u(t)$ exists on all compact time intervals of $[0,T_0)$.  Since $u_{n_k}(t) \rightarrow u(t)$ locally uniformly on this time interval, if $T_0 < \infty$, then we necessarily have $\liminf_{t \nearrow T_0} \norm{u(t)}_\infty = \infty$ and therefore $T_+(u_0) = T_0$. 
This proves the lemma. 
\end{proof} 

\subsection{Mass Comparison}
In this section we develop the comparison arguments and prove Theorem \ref{thm:blowup}.  

First, note that $\bar{u}(t)$ is a solution to the transport equation
\begin{equation}
\partial_t\bar{u} + \grad\cdot(\bar{u} \frac{\dot{R}}{R}x) = 0. \label{eq:ubar_transpt}
\end{equation}  
By construction, for all $t$, $\bar{u}(t,x)$ is also a weak solution to the scale-invariant problem (Remark \ref{rmk:RescaledV}), 
\begin{equation}
\frac{1}{a(R_0)}\grad \cdot (\bar{u} \grad \mathcal{N} \ast \bar{u}) = \Delta \bar{u}^m. \label{eq:ubar_quasistat}
\end{equation}

We have the following lemma which describes the PDE satisfied by the mass function corresponding to $\rho(t)$.
\begin{lemma}[Evolution of Mass Function] \label{lem:EvoMass}
Let $\rho(t,x)$ be a smooth radially symmetric solution to \eqref{def:ADD_rescaled}.
Then, $M(t,r) := \int_{\abs{x} \leq r} \rho(t,x) dx$ satisfies
\begin{align}
\partial_t M(t,r) = \sigma r^{d-1}\partial_r\left(\frac{\partial_rM(r)}{\sigma r^{d-1}}\right)^m  + \frac{\mu^{-2/d}}{a(r)}\frac{M(r)}{\sigma r^{d-1}}\partial_rM(r), \label{eq:Mpde}
\end{align}
where $\sigma$ is the surface area of the unit sphere in $\Real^d$. 
\end{lemma}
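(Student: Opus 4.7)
The plan is straightforward: integrate the PDE \eqref{def:ADD_rescaled} over the ball $B_r$ and use the divergence theorem together with radial symmetry to identify both terms on the right-hand side of \eqref{eq:Mpde}. Writing $M(t,r)=\int_0^r \sigma s^{d-1}\rho(t,s)\,ds$, we have $\partial_r M=\sigma r^{d-1}\rho$, so $\rho=\partial_r M/(\sigma r^{d-1})$. Integrating the equation $\rho_t=\Delta\rho^m-\mu^{1-2/d}\nabla\cdot(\rho\nabla c)$ over $B_r$ and applying the divergence theorem gives
\begin{equation*}
\partial_t M(t,r)=\sigma r^{d-1}\partial_r\rho^m-\mu^{1-2/d}\sigma r^{d-1}\rho\,\partial_r c.
\end{equation*}
The diffusion term rewrites immediately as $\sigma r^{d-1}\partial_r\bigl(\partial_r M/(\sigma r^{d-1})\bigr)^m$, which is precisely the first term in \eqref{eq:Mpde}.

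For the drift term I would solve the elliptic equation for $c$ in radial coordinates. Since $a$ is radial (in the setting in which this lemma will be applied), $c$ is radial by uniqueness of the decaying strong solution, and the elliptic equation reduces to
\begin{equation*}
-\frac{1}{r^{d-1}}\partial_r\bigl(r^{d-1}a(r)\partial_r c\bigr)=\mu^{-1}\rho.
\end{equation*}
Integrating from $0$ to $r$ and using the regularity of $\partial_r c$ at the origin to kill the boundary term, I obtain
\begin{equation*}
a(r)r^{d-1}\partial_r c=-\mu^{-1}\int_0^r s^{d-1}\rho(t,s)\,ds=-\frac{\mu^{-1}}{\sigma}M(t,r),
\end{equation*}
so that $\partial_r c=-\mu^{-1}M(t,r)/(\sigma a(r)r^{d-1})$.

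Substituting this back into the expression for $\partial_t M$ and using $\rho=\partial_r M/(\sigma r^{d-1})$ produces
\begin{equation*}
-\mu^{1-2/d}\sigma r^{d-1}\rho\,\partial_r c=\mu^{-2/d}\,\frac{M(t,r)\,\partial_r M(t,r)}{\sigma a(r)\,r^{d-1}},
\end{equation*}
which is exactly the second term in \eqref{eq:Mpde}. No genuine obstacle is expected here: the calculation is a direct application of the divergence theorem plus the radial representation of $\partial_r c$. The one point deserving care is justifying that boundary terms at $r=0$ vanish; this uses the smoothness assumption on $\rho$ (hence on $c$ through elliptic regularity) together with the natural behavior $M(t,r)\sim Cr^d$ near the origin, which ensures $\partial_r c$ is bounded at $r=0$.
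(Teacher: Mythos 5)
Your proof is correct and follows essentially the same route as the paper: integrate the PDE over $B_r$, apply the divergence theorem, and use the radially integrated elliptic equation to express $\partial_r c$ in terms of $M$. (Incidentally, your radial formula $a(r)\,\sigma r^{d-1}\,\partial_r c = -\mu^{-1}M(t,r)$ is the correct one; the paper's displayed equation \eqref{eq:NewtPull} carries a spurious factor of $1/(\sigma r^{d-1})$ on the right-hand side, though the subsequent lines of the paper's computation proceed as if the correct formula were used, so the final result is unaffected.)
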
 
\begin{proof}
By radial symmetry 
\begin{equation*}
\rho(t,r) = \frac{1}{\sigma r^{d-1}}\partial_r M(t,r)
\end{equation*}
and by the divergence theorem and the radial symmetry of $a$,
\begin{equation}
a(r)\int_{|x|=r}\partial_r c(t,x) dS = -\frac{\mu^{-1}}{\sigma r^{d-1}}M(t,r). \label{eq:NewtPull}
\end{equation}
Again using the divergence theorem and radial symmetry, 
\begin{align*}
\partial_t M(t,r) & = \int_{\abs{x} = r} \frac{x}{\abs{x}}\cdot \left( \grad \rho^m - \mu^{1-2/d}\rho\grad c \right) dS \\
& = \sigma r^{d-1}\partial_r\left( \frac{\partial_rM(t,r)}{\sigma r^{d-1}} \right)^m - \frac{\mu^{1-2/d}}{\sigma r^{d-1}}\partial_rM(t,r)\int_{\abs{x} = r} \partial_r c (t,r) dS \\ 
& = \sigma r^{d-1}\partial_r\left(\frac{\partial_rM(t,r)}{\sigma r^{d-1}}\right)^m  + \frac{\mu^{-2/d}}{a(r)}\frac{\partial_rM(t,r)}{\sigma r^{d-1}}M(t,r). 
\end{align*}
\end{proof}

Suppose $u_0$ satisfies \eqref{ineq:massorderedic} and let $\rho_\e(t)$ solve \eqref{def:ADD_rescaled} with initial data
\begin{equation*}
\rho_\e(0) = \rho_0 + \epsilon \frac{e^{-\abs{x}^2/4}}{(4\pi)^{d/2}}. 
\end{equation*}
By Lemma \ref{lem:regularity2}, $\rho_\e(t,r)$ remain smooth and positive on their time interval of existence $(0,T_+(\rho_\e(0)))$.  

We are now ready to state and prove the mass comparison result which will complete the proof of Theorem \ref{thm:blowup}. 

\begin{proposition} \label{prop:MassComp}
Suppose that $T < \min(T_\star,T_+(\rho_\e(0)))$ and let $M_\e(t,r) := \int_{\abs{x} \leq r}\rho_\e(t,x)dx$. Further suppose $\overline{M}(0,r) \leq M_\epsilon(0,r)$ for all $r\geq 0$. Then we have
\begin{equation*}
\overline{M}(t,r) - M_\epsilon(t,r) \leq 0 \hbox{ in }[0,T]\times \Real^d.
\end{equation*}

\end{proposition}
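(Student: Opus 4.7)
The plan is to show that $\overline{M}$ is a subsolution of the evolution equation \eqref{eq:Mpde} satisfied by $M_\epsilon$, and then run a parabolic maximum principle on $w := \overline{M}-M_\epsilon$.

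First I would derive the PDE for $\overline{M}$. Integrating the transport identity \eqref{eq:ubar_transpt} over $\set{|x|\leq r}$ yields
\begin{equation*}
\partial_t \overline{M}(t,r) = -\frac{\dot R(t)}{R(t)}\, r\, \partial_r \overline{M}(t,r),
\end{equation*}
and integrating the quasi-stationary identity \eqref{eq:ubar_quasistat} together with the shell-theorem identity $\partial_r(\mathcal{N}\ast\bar{u})(t,r) = -\overline{M}(t,r)/(\sigma r^{d-1})$ yields
\begin{equation*}
\sigma r^{d-1}\partial_r\!\left(\frac{\partial_r\overline{M}}{\sigma r^{d-1}}\right)^{\!m} = -\frac{1}{a(R_0)}\,\frac{\partial_r\overline{M}}{\sigma r^{d-1}}\,\overline{M}.
\end{equation*}
Combining these, $\overline{M}$ obeys \eqref{eq:Mpde} plus the excess term
\begin{equation*}
G(t,r) = \partial_r\overline{M}(t,r)\left[-\frac{\dot R(t)\, r}{R(t)} + \frac{\overline{M}(t,r)}{\sigma r^{d-1}}\!\left(\frac{1}{a(R_0)} - \frac{\mu^{-2/d}}{a(r)}\right)\right].
\end{equation*}

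The key step is to show $G\leq 0$. For $r\geq R(t)$, $\bar u(t,r)=0$ forces $\partial_r\overline{M}=0$, so $G=0$. For $0<r<R(t)\leq R_0$, the radial monotonicity of $a$ on $\set{|x|<\delta_0}$ gives $a(r)\leq a(R_0)$, so the $a$-bracket is bounded above by $(1-\mu^{-2/d})/a(R_0)<0$. The pointwise lower bound on $\overline{M}$ comes from the radial monotonicity of $V$: writing $\tilde M(s):=\int_{|y|\leq s}V(y)\,dy$, the map $s\mapsto\tilde M(s)/s^d$ is non-increasing on $(0,1]$, giving $\tilde M(s)\geq\|V\|_1\,s^d$. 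Applied with $s=r/R(t)$ and using $\|V\|_1 = M_c/a(0)^{d/2}$,
\begin{equation*}
\overline{M}(t,r) = a(R_0)^{d/2}\tilde M(r/R(t)) \geq (a(R_0)/a(0))^{d/2} M_c (r/R(t))^d \geq M_c (r/R(t))^d.
\end{equation*}
Substituting this lower bound along with the explicit choice \eqref{def:Reqn} of $\dot R$ turns the $G$-bracket into an inequality saturated at equality, giving $G\leq 0$.

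With $\overline{M}$ a subsolution of \eqref{eq:Mpde} and $M_\epsilon$ a classical solution (smooth and strictly positive on $[0,T]\times\Real^d$ by Lemma \ref{lem:regularity2}), I would argue by contradiction: suppose $\tilde w := w e^{-Kt}$ attains a positive maximum at some interior $(t_0,r_0)\in(0,T]\times(0,\infty)$, with $K>0$ to be chosen. At this point $\partial_r w(t_0,r_0)=0$ forces $r_0<R(t_0)$ (otherwise $\bar u(t_0,r_0)=0$ would give $\partial_r w = -\partial_r M_\epsilon<0$, precluding a max) and $\partial_{rr}w(t_0,r_0)\leq 0$. Writing $\bar\phi:=\partial_r\overline{M}/(\sigma r^{d-1})=\bar u$ and $\phi_\epsilon:=\partial_r M_\epsilon/(\sigma r^{d-1})=\rho_\epsilon$, the equality $\bar\phi(t_0,r_0) = \phi_\epsilon(t_0,r_0)>0$ holds at the max, and a direct computation of the operator difference gives
\begin{equation*}
\partial_t w(t_0,r_0) \leq \frac{\mu^{-2/d}}{a(r_0)}\,\bar\phi(t_0,r_0)\,w(t_0,r_0),
\end{equation*}
because the porous-medium diffusion contribution $m\bar\phi^{m-1}\partial_{rr}w/(\sigma r^{d-1})$ is $\leq 0$ and the first-order advective piece vanishes with $\partial_r w = 0$. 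Choosing $K > \sup_{[0,T]\times\Real^d}(\mu^{-2/d}\bar u/a)$ (finite since $R(t)\geq R(T)>0$ on $[0,T]$), the condition $\partial_t\tilde w(t_0,r_0)\geq 0$ at the max of $\tilde w$ forces $\partial_t w(t_0,r_0)\geq K w(t_0,r_0)$, contradicting the displayed line since $w(t_0,r_0)>0$. The parabolic boundary is handled by $w(t,0)=0$, $w(0,r)\leq 0$ by hypothesis, and $\lim_{r\to\infty}w(t,r) = \|\bar u\|_1-\|\rho_\epsilon\|_1\leq 0$ (taking $r\to\infty$ in the initial hypothesis plus mass conservation for $\rho_\epsilon$).

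The main obstacle is the verification $G\leq 0$: it is the technical heart of the argument and rests on a tight balancing of the ODE \eqref{def:Reqn} against the radial monotonicity of $a$ near the origin and the sharp pointwise inequality $\tilde M(s)/s^d\geq \|V\|_1$ coming from the radial monotonicity of $V$. The exponential weight $e^{-Kt}$ cleanly sidesteps a positive zeroth-order coefficient, while Lemma \ref{lem:regularity2} ensures the porous-medium diffusion is non-degenerate wherever the pointwise maximum-principle computation is carried out.
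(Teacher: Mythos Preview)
Your argument is correct and follows essentially the same route as the paper: the exponential-weight maximum principle on $\overline{M}-M_\epsilon$, the exclusion of the boundary $r=R(t)$ via strict positivity of $\rho_\epsilon$, the ordering of the degenerate-diffusion terms via $\partial_r\overline{M}=\partial_r M_\epsilon$ and $\partial_{rr}w\leq 0$ at the maximum, the use of $a(r)\leq a(R_0)$, and the balancing of the $\dot R$ term against the advection via $\overline{M}(t,r)\geq M_c(r/R(t))^d$. Two small organizational differences are worth noting: you package the computation as a global subsolution inequality $G\leq 0$ before running the maximum principle, whereas the paper performs the subtraction directly at the contact point; and you obtain the key lower bound $\overline{M}\geq M_c(r/R)^d$ from the monotonicity of $s\mapsto \tilde M(s)/s^d$ (a direct consequence of $V$ being radially non-increasing), whereas the paper deduces the same bound by an elliptic comparison of $\overline{M}$ with $h(r)=M_c(r/R)^d$ on $(0,R(t))$. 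Neither difference is substantive.
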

\begin{proof}
Similar to the proof of Lemma \ref{lem:EvoMass}, by \eqref{eq:ubar_quasistat} we have,
\begin{equation}
\sigma r^{d-1}\partial_r\left(\frac{\partial_r\overline{M}(r)}{\sigma r^{d-1}}\right)^m  + \frac{1}{a(R_0)}\frac{\overline{M}(r)}{\sigma r^{d-1}}\partial_r\overline{M}(r) = 0, \label{eq:Mbar_Stationary}
\end{equation}
and by \eqref{eq:ubar_transpt},
\begin{align}
\partial_t\overline{M}(r) & = -\int_{\abs{x} = r}\bar{u}(r)\frac{r}{R}\dot{R} dS \nonumber \\
& = -\frac{r}{R}\dot{R}\partial_r \overline{M}(r). \label{eq:Mbar_transpt}
\end{align}
For notational simplicity, define $M(t,r) := M_\epsilon(t,r)$. 

\vspace{10pt}

Consider the space-time region $(t,r) \in Q_T$, where
\begin{equation*}
Q_T = \set{(t,r): t\in[0,T], \;\; r \in [0,R(t)]}.  
\end{equation*}
As $M(\bar{u}) \equiv \overline{M}(t,r) \hbox{ in } |x|\geq R(t)$, we need only prove the comparison result in $Q_T$, from which the result on $(0,T)\times \Real^d$ follows since $M(t,r)$ is increasing in $r$. 

\vspace{10pt}

For a given constant $\lambda > 0$ (to be chosen later), let us consider the function 
$$
f(t,r):=(\overline{M}(t,r) - M(t,r))e^{-\lambda t}\hbox{ in }[0,T]\times \mathbb{R}^+.
$$
Note that $f(0,r)\leq 0$. If $f(t,r) \leq 0$ in $Q_T$ there is nothing to prove, so suppose that it is positive somewhere. 
Then $f(t,r)$ has a strictly positive maximum in $Q_T$, which is achieved at some point $(t_\star,r_\star)$. 
Necessarily, $r_\star > 0$. If $r_\star = R(t_\star)$ then by $r_\star$ being the location of the maximum, we must have
\begin{equation*}
0 = \partial_r\overline{M}(t_\star,r_\star) \geq \partial_rM(t_\star,r_\star) = \sigma r_*^{d-1}u^\e(t_\star, r_\star).
\end{equation*}
Since $u_\e$ is strictly positive, it follows that $r_\star < R(t_\star)$. 
This implies that due to maximization,
\begin{itemize}
\item[(A)] $\partial_t(\overline{M}(t_\star,r_\star) - M(t_\star,r_\star)) \geq \lambda(\overline{M}(t_\star,r_\star) - M(t_\star,r_\star))$.
\item[(B)] $\partial_r\overline{M}(t_\star,r_\star) = \partial_rM(t_\star,r_\star)$. 
\item[(C)] $\partial_{rr}M(t_\star,r_\star) \geq \partial_{rr}\overline{M}(t_\star,r_\star)$. 
\end{itemize}
Using the mass equations satisfied by each function we have, 
by \eqref{eq:Mpde}, \eqref{eq:Mbar_transpt} and \eqref{eq:Mbar_Stationary}, 
\begin{align*}
\partial_t(\overline{M} - M)(t_\star,r_\star) & = \sigma r_\star^{d-1}\partial_r\left(\frac{\partial_r\overline{M}}{\sigma r_\star^{d-1}}\right)^m  + \frac{1}{a(R_0)}\frac{\overline{M}}{\sigma r_\star^{d-1}}\partial_r\overline{M} \\
& \;\;\; - \sigma r_\star^{d-1}\partial_r\left(\frac{\partial_rM}{\sigma r_\star^{d-1}}\right)^m  - \frac{\mu^{-2/d}}{a(r_\star)}\frac{M}{\sigma r_\star^{d-1}}\partial_rM - \partial_r\overline{M}\frac{r_\star}{R(t_\star)}\dot{R}(t_\star).
\end{align*}
where all terms are evaluated at $(t_\star,r_\star)$.
By (B) and (C), we can order the higher order nonlinearity coming from the diffusion as well as relate the advection terms, 
\begin{align*}
\partial_t(\overline{M} - M)(t_\star,r_\star) & \leq \frac{\partial_r \overline{M}}{\sigma r_\star^{d-1}}\left(\frac{1}{a(R_0)}\overline{M} - \frac{\mu^{-2/d}}{a(r_\star)}M\right)
  - \partial_r\overline{M}\frac{r_\star}{R(t_\star)}\dot{R}(t_\star).
\end{align*}
Using that $r_\star \leq R_0$ and by assumption $a(r)$ is non-decreasing on $r \in [0,R_0]$ we have, 
\begin{align*}
\partial_t(\overline{M} - M)(t_\star,r_\star) & \leq \frac{1}{a(r_\star)}\frac{\partial_r \overline{M}}{\sigma r_\star^{d-1}}(\overline{M} - M) (t_\star, r_\star)\\ 
 & \;\;\; + \partial_r\overline{M}\left[\frac{(1 - \mu^{-2/d})}{a(r_\star)}\frac{M}{\sigma r_\star^{d-1}} - \frac{r_\star}{R(t_\star)}\dot{R}(t_\star)\right](t_\star, r_\star).
\end{align*}

Since $V$ is radial and non-increasing, we have 
$$
\partial_r(r^{1-d}\partial_r \overline{M}(t,r)) = \sigma^{-1}\partial_r(\bar{u}(t,r)) \leq 0.
$$
In addition we have $\overline{M}(t,0)=0$ and $\overline{M}(t,R(t))=M_c$. Note that  $h(r) := M_c (\frac{r}{R(t)})^d$ solves $\partial_r(r^{1-d}\partial_r h(r))=0$ in $(0,R(t))$ with boundary data $h(0)=0$ and $h(R(t))=M_c$. Therefore it follows from the weak elliptic maximum principle, 
\begin{equation}\label{observation}
\overline{M}(t,r) \geq M_c\left(\frac{r}{R(t)}\right)^d.
\end{equation}
Using the above observation we have
\begin{align*}
\partial_t(\overline{M} - M)(t_\star,r_\star) & \leq \frac{1}{a(r_\star)}\frac{\partial_r \overline{M}}{\sigma r_\star^{d-1}}(\overline{M} - M) (t_\star, r_\star) \\ 
 & \;\;\; + \frac{\partial_r\overline{M}}{r_\star^{d-1}}\left[\frac{(1 - \mu^{-2/d})}{a(r_\star)}\frac{M}{\sigma} -\frac{\overline{M}}{M_c}R(t_\star)^{d-1}\dot{R}(t_\star)\right](t_\star, r_\star).
\end{align*}
Due to \eqref{def:Reqn}, and using the fact that $a(r)$ is non-decreasing,  we have
\begin{align*}
\partial_t(\overline{M} - M)(t_\star,r_\star) & \leq \frac{1}{a(r_\star)}\frac{\partial_r \overline{M}}{\sigma r_\star^{d-1}}(\overline{M} - M) (t_\star, r_\star)\\ 
 & \;\;\; + \partial_r\overline{M}\left[\frac{(\mu^{-2/d}-1)}{a(r_\star)\sigma r_\star^{d-1}}(\overline{M}-M)\right](t_\star, r_\star).
\end{align*}
Since $\lambda(\overline{M}-M) \leq \partial_t(\overline{M}-M)$, the result follows by choosing
$$
\lambda > \sup_{t\in [0,T]}\|\bar{u}(t)\|_{\infty}\frac{1}{a(0)\mu^{2/d}}.
$$

\end{proof}

We may now prove Theorem \ref{thm:blowup}.

\begin{proof}(Theorem \ref{thm:blowup})
Let $u_0$ satisfy the hypotheses of Proposition \ref{prop:InitialMassConcen} and let $\set{\rho_\e}_{\e > 0}$ and $T_\star$ be given as above.
By the hypotheses of Proposition \ref{prop:InitialMassConcen},
$$
\overline{M}(0,r) \leq M(0,r) \leq M_\e(0,r)\hbox{ for }  r\in [0,\infty) \hbox{ for any }\e>0.
$$

Let $T_0$ be defined as in Lemma \ref{lem:full_approx}, which satisfies $T_+(\rho_0) = T_0 \leq \liminf_{\e \rightarrow 0} T_+(\rho_\e(0))$.  
Therefore, it suffices to show $T_0 \leq T_\star$. 

\vspace{10pt}

To this end, suppose $T_0 > T_\star$, which implies that 
$\set{\rho_\e(t)}$ and $\rho(t)$ exist on $[0,T_\star]$ for sufficiently small $\e$. 
Moreover, by Lemma \ref{lem:full_approx}, there exists a sequence $\rho_{\e_k} \rightarrow \rho$ in $C([0,T_\star];L^1(\Real^d))$ and locally uniformly.
Combined with Proposition \ref{prop:MassComp}, this implies 
\begin{equation}
\overline{M}(t,r) \leq M(t,r), \label{ineq:LocalMassOrdered} 
\end{equation}          
for all $0 \leq t < T_\star$. 
Since $\bar{u}$ concentrates at time $T_\star$, \eqref{ineq:LocalMassOrdered} implies that $\rho(t)$ must also concentrate at $T_\star$, contradicting the assumption $T_+(u_0) = T_0  > T_\star$.
\end{proof}

We briefly sketch a proof of Theorem \ref{cor:HomogProblem}. 
\begin{proof}(Theorem \ref{cor:HomogProblem})
Let $u_0$ be as in the statement of Theorem \ref{cor:HomogProblem}. We only need to verify \eqref{ineq:massorderedic} in order to apply Proposition \ref{prop:InitialMassConcen}. 
Let $R_1$ be such that $M_0 = \int_{\abs{x} \leq R_1} u_0 dx > M_c$. Then \eqref{ineq:massorderedic} holds for any $r > R_1$.
By assumption, $M_c M_0^{-1}u_0$ is strictly positive on some compact ball $\set{\abs{x} \leq r_1}$. Hence we may choose $R_0$ 
sufficiently large such that $R_0^{-d}V(R_0^{-1}x) < M_c M_0^{-1}u_0(x)$ for $\abs{x} \leq r_1$ and therefore \eqref{ineq:massorderedic} holds up to at least $r = r_1$. As for $r_1 \leq r \leq R_1$, $M_c M_0^{-1}\int_{\abs{x} \leq r}u_0(x) dx$ is non-decreasing and hence bounded below on the compact annulus by the value at $r = r_1$. Hence, we may choose $R_0$ even larger to ensure that \eqref{ineq:massorderedic} holds also for $r_1 \leq r \leq R_1$ and therefore everywhere. Hence, we may apply Proposition \ref{prop:InitialMassConcen} and the result follows.   

We now prove that we may construct blow up solutions with arbitrarily large initial free energy. 
We follow a similar procedure as Lemma 3.7 in \cite{Blanchet09}.  
Let $\mathcal{V}_M \subset L^1_+ \cap L^m$ be the set of non-negative, radially symmetric non-increasing functions in $L^1 \cap L^m$ with mass $M$. By the above reasoning, if $u_0 \in \mathcal{V}_M$ is continuous with finite second moment then the associated solution $u(t)$  to the scale-invariant problem with $u(0) = u_0$ blows up in finite time. Now we prove that 
\begin{equation*}
\sup_{h \in \mathcal{V}_M} \F(h) = +\infty. 
\end{equation*}
Suppose for contradiction that   
\begin{equation}
A := \sup_{f \in \mathcal{V}_M} \F(h) < \infty. \label{ineq:Ffinite} 
\end{equation}
Following the same scaling argument as in Lemma 3.7 of \cite{Blanchet09}, we may use the HLS \eqref{ineq:NHLS} to show \eqref{ineq:Ffinite} implies a reverse H\"older-type inequality for any $h \in L^1_+ \cap L^m$ which is radially symmetric non-increasing 
\begin{equation*}
\norm{h}_m^m\norm{h}_1^{2/d} \lesssim_M \norm{h}_{2d/(d+2)}^2. 
\end{equation*}
However, this inequality is definitely false, as $m > 2d/(d+2)$ implies we may easily construct a sequence of functions with uniformly bounded $L^{2d/(d+2)}$ norm and unbounded $L^m$ norm. Indeed, consider $f_\delta = (\delta + \abs{x})^{-\alpha}\mathbf{1}_{B(0,1)}(x)$ with $\alpha \in (d/m,(d+2)/2)$ with $\delta \in [0,1]$. Then $\norm{f_\delta}_1 \geq \norm{f_1}_1 > 0$ but $\norm{f_\delta}_{2d/(d+2)}$ is uniformly bounded and $\lim_{\delta \rightarrow 0}\norm{f_\delta}_{m} = \infty$.  
Hence it follows by contradiction that $A = +\infty$. By density, we may restrict to continuous functions with finite second moment in $\mathcal{V}_M$ and show that there are solutions to the scale-invariant problem with initial free energy arbitrarily large which blow up in finite time.  
\end{proof} 

\section{Global existence at critical mass} \label{sec:GECritMass}
In the scale-invariant case, the free energy no longer controls the entropy if $M(u) = M_c$. Indeed, consider a family of extremals (from Theorem \ref{thm:V}), $V_R = R^{-d}V(R^{-1} x)$ with $R \rightarrow 0$, which concentrates into a Dirac mass but all satisfy $\F(V_R) \equiv 0$. 
Hence unlike the subcritical case, a proof of global existence at the critical mass requires more information than what is provided by the energy dissipation inequality alone.

In the homogeneous case, the mass comparison principles with a supersolution prove global existence of radial threshold solutions \cite{Yao11}, a result which extends to general threshold solutions by symmetrization inequalities \cite{DiazNagai95,DiazNagaiRakotoson98,KimYao11}. 
However, it is important to note that \eqref{def:ADD} does \emph{not} satisfy the same comparison principles as the homogeneous problem and in particular, radial solutions which are initially monotone are not guaranteed to remain so. 
Among other things, this implies that a mass supersolution alone cannot be used to imply global existence. 
However, Theorem \ref{thm:Concentration} provides a kind of rigidity to threshold blow-up solutions, as all of the mass in the solution must concentrate into a single point where $a(x)$ achieves the minimal value. It is this additional information which allows us to prove Theorem \ref{thm:GE_CritMass}, as now we only need to rule out a very specific behavior.  
Indeed, we use a mass supersolution to show that any radially symmetric threshold solution cannot concentrate into the origin, even if the blow-up is in infinite time. 
Therefore, by Theorem \ref{thm:Concentration}, all radially symmetric critical mass solutions must remain uniformly bounded.   

 Certain details of the mass comparison argument used to prove Theorem \ref{thm:GE_CritMass} differ from those used to prove Theorem \ref{thm:blowup}. In fact, the proof of Theorem \ref{thm:GE_CritMass} is significantly easier.  
In this section we use the extremals as mass supersolutions, as opposed to the previous section where our barriers were subsolutions.
Define 
\begin{equation*}
\bar{a} := \min_{x \in \Real^d} a(x), 
\end{equation*}
and our supersolution to be the extremal 
\begin{equation}
\bar{u}(t,x) = \frac{\bar{a}^{d/2}}{R^d}V\left( \frac{x}{R} \right),
\end{equation}
where now $R > 0$ is a sufficiently small constant which will be fixed later in the proof. Note as well that $M(\bar{u}) = M_c$. 
As above in \S\ref{sec:FTBU}, we define the mass distributions 
\begin{equation*}
M(t,r) = \int_{\abs{x} \leq r} u(t,x) dx, \;\;\; \overline{M}(t,r) = \int_{\abs{x} \leq r} \bar{u}(t,x) dx.  
\end{equation*}
In contrast to \S\ref{sec:FTBU}, $R$ is chosen such that $M(0,r) \leq \overline{M}(r,0)$ for all $r \in [0,R]$, which is always possible since $u_0$ is continuous. 
We now show the following proposition. Since the proof is similar to the proof of Proposition \ref{prop:MassComp} we only sketch it. 
In light of Theorem \ref{thm:Concentration}, this proposition implies Theorem \ref{thm:GE_CritMass}, as it rules out the finite or infinite time concentration of mass.

\begin{proposition}\label{prop:CritMassComp}
Suppose that $T < \min(T_+(u_0))$. Further suppose $M(0,r) \leq \overline{M}(0,r)$ for all $r\geq 0$. Then we have
\begin{equation}
M(t,r) - \overline{M}(t,r) \leq 0 \hbox{ in }[0,T]\times \Real^d. \label{ineq:GE_MassComp}
\end{equation}
\end{proposition}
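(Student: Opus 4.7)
The plan is to mirror the structure of the proof of Proposition \ref{prop:MassComp}, but with $\bar u$ playing the role of a mass \emph{supersolution}. First I would derive, exactly as in Lemma \ref{lem:EvoMass} (now with $\mu=1$ and retaining the spatial dependence of $a$), the PDE for the mass distribution of any smooth radial solution,
\begin{equation*}
\partial_t M = \sigma r^{d-1}\partial_r\!\left(\frac{\partial_r M}{\sigma r^{d-1}}\right)^{\!m} + \frac{1}{a(r)}\frac{M\,\partial_r M}{\sigma r^{d-1}},
\end{equation*}
and note, from Remark \ref{rmk:RescaledV} applied with $a=\bar a$, that the stationary barrier satisfies $\partial_t\overline{M}\equiv 0$ together with
\begin{equation*}
\sigma r^{d-1}\partial_r\!\left(\frac{\partial_r \overline{M}}{\sigma r^{d-1}}\right)^{\!m} = -\frac{1}{\bar a}\frac{\overline{M}\,\partial_r\overline{M}}{\sigma r^{d-1}}.
\end{equation*}
Since $\bar u$ is supported in $\{|x|\le R\}$ with $\|\bar u\|_1=M_c$ and $M(t,r)\le M_c$ by mass conservation, the desired inequality \eqref{ineq:GE_MassComp} is automatic for $r\ge R$, so the analysis can be confined to the space--time cylinder $Q_T:=[0,T]\times[0,R]$.

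As in Section \ref{sec:FTBU}, classical calculus at the free boundary fails, so I would first prove the bound for a smooth, strictly positive approximation $u^\epsilon(t)$ with initial datum $u_0+\epsilon G$ for a radial Gaussian $G$, using Lemmas \ref{lem:regularity2} and \ref{lem:full_approx}; the latter ensures $T<T_+(u^\epsilon(0))$ once $\epsilon$ is small enough, and that $u^\epsilon\to u$ in $C([0,T];L^1(\Real^d))$. For such $u^\epsilon$ set $f^\epsilon(t,r)=(M^\epsilon(t,r)-\overline{M}(t,r))e^{-\lambda t}$ and argue by contradiction: if $f^\epsilon$ attains a value $>\epsilon$ anywhere on $Q_T$, it does so at some interior maximum $(t_\star,r_\star)\in(0,T]\times(0,R)$. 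At that point $\partial_r M^\epsilon=\partial_r\overline{M}$, $\partial_{rr}M^\epsilon\le \partial_{rr}\overline{M}$, and $\partial_t(M^\epsilon-\overline{M})\ge \lambda(M^\epsilon-\overline{M})$. Subtracting the two PDEs, noting that the porous-media diffusion term $\sigma r^{d-1}\partial_r(\partial_r M/\sigma r^{d-1})^m$ is ordered at the maximum, and using $\partial_r M^\epsilon=\partial_r\overline{M}$ to combine the drift terms, yields
\begin{equation*}
\partial_t(M^\epsilon-\overline{M})(t_\star,r_\star)\le \frac{\partial_r\overline{M}}{\sigma r_\star^{d-1}}\!\left(\frac{M^\epsilon-\overline{M}}{a(r_\star)}+\overline{M}\!\left(\frac{1}{a(r_\star)}-\frac{1}{\bar a}\right)\!\right).
\end{equation*}
The crucial observation is that $a(r_\star)\ge \bar a$ makes the second summand nonpositive, while the first is a multiple of $M^\epsilon-\overline{M}$ with coefficient bounded by $\|\bar u\|_\infty/\bar a$. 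Choosing $\lambda>\|\bar u\|_\infty/\bar a$ contradicts $\partial_t(M^\epsilon-\overline{M})\ge \lambda(M^\epsilon-\overline{M})>0$.

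On the parabolic boundary of $Q_T$ the function $f^\epsilon$ is trivially controlled: $f^\epsilon(0,r)\le \epsilon$ by the assumed ordering $M(0,r)\le\overline{M}(0,r)$ plus the added Gaussian mass, $f^\epsilon(t,0)=0$, and $f^\epsilon(t,R)=(M^\epsilon(t,R)-M_c)e^{-\lambda t}\le \epsilon$ since $\|u^\epsilon(t)\|_1=M_c+\epsilon$. Hence $M^\epsilon(t,r)-\overline{M}(t,r)\le \epsilon\,e^{\lambda T}$ throughout $Q_T$, and passing to the limit via the locally uniform convergence supplied by Lemma \ref{lem:full_approx} yields \eqref{ineq:GE_MassComp}.

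The part I expect to require the most care is the interplay between the approximation procedure and the barrier's critical mass: adding $\epsilon G$ pushes $u^\epsilon$ slightly into the supercritical regime, which could threaten existence on $[0,T]$ and introduces an $O(\epsilon)$ defect at $r=R$ in the comparison. Both issues are mild --- the first is handled by Lemma \ref{lem:full_approx} once $T<T_+(u_0)$ is fixed, and the second dissolves in the limit because the constant $e^{\lambda T}$ is independent of $\epsilon$ --- but they force the max-principle argument to be run on a ``cushioned'' inequality rather than directly on $M-\overline{M}$. Given the proposition, Theorem \ref{thm:GE_CritMass} then follows by choosing $R$ so small that $M(0,r)\le\overline{M}(0,r)$ for all $r$ (possible by continuity of $u_0$) and combining the resulting mass bound with Theorem \ref{thm:Concentration}, which forbids any mass concentration at the unique minimum point of $a$.
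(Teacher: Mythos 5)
Your proposal is correct and mirrors the paper's overall strategy (interior max-principle on $(M-\overline{M})e^{-\lambda t}$, regularize to smooth positive solutions, use $a(r)\geq\bar a$ and $\partial_t\overline{M}=0$, pass to the limit via Lemma \ref{lem:full_approx}). The one genuine departure is the choice of regularization. The paper mollifies $u_0$ with the heat kernel precisely \emph{because} this preserves $\|u_0^\epsilon\|_1 = M_c$; consequently $M^\epsilon(t,R)\leq M_c=\overline{M}(t,R)$ holds exactly and a positive maximum at the contact line $r=R$ is simply impossible, which makes the boundary analysis immediate. You instead add $\epsilon G$, pushing $\|u_0^\epsilon\|_1$ to $M_c+\epsilon$, and compensate by running the maximum principle on a cushioned inequality $f^\epsilon\leq\epsilon$ with the threshold $>\epsilon$ for the contradiction. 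This is perfectly sound and your limit argument (with $\lambda$ chosen independent of $\epsilon$) closes cleanly, but it is slightly more bookkeeping than the paper's mass-preserving choice, which the paper explicitly flags as the reason for switching regularizations relative to Proposition \ref{prop:MassComp}.

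One small point you leave unaddressed: Theorem \ref{thm:GE_CritMass} allows $\gamma\geq 0$, while the mass PDE you write down (mirroring Lemma \ref{lem:EvoMass}) is derived for $\gamma\equiv 0$. For nonzero $\gamma$ the divergence theorem gives an extra term $-\int_{|x|\leq r}\gamma c\,dx\leq 0$ in the flux of $c$, so $\partial_t M$ is only bounded \emph{above} by the right-hand side of your PDE rather than equal to it; this is exactly the favourable sign for the supersolution comparison, so the argument goes through unchanged, as the paper briefly notes. It would be worth saying this explicitly.
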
 
\begin{proof} 
As in the previous section, we must regularize the initial data. Hence, fix $\epsilon>0$, define 
\begin{equation*}
u_0^\epsilon(x) =  \frac{1}{4\pi \epsilon} \int e^{-\frac{\abs{x-y}^2}{4\epsilon}} u_0(y) dy. 
\end{equation*}
Note that we use a different regularization here than in the proof of Proposition \ref{prop:MassComp} in order to preserve the mass.
Denote the associated weak solutions $u^\epsilon(t)$, which are smooth and strictly positive until blow-up time $T_+(u_0^\epsilon)$ by Lemma \ref{lem:regularity2}.  
We will deduce \eqref{ineq:GE_MassComp} independent of $\epsilon$ and hence Lemma \ref{lem:full_approx} shows that the inequality holds also for $u(t)$. 

The proof that \eqref{ineq:GE_MassComp} is satisfied independent of $\epsilon$ is a simple variation on the proof of Proposition \ref{prop:MassComp}, made significantly more straightforward by the simpler barrier $\bar{u}(x,t)$ which here does not depend on time.
The difference is that $\bar{u}(x,t)$ plays the role of supersolution here, and so we are looking for the positive maximum of $ (M(t,r) - \overline{M}(t,r))e^{-\lambda t}$.
The main place the argument differs significantly from Proposition \ref{prop:MassComp} is along the contact line $\abs{x} = R$ at the edge of the support of $\bar{u}$. 
We can rule out that a positive maximum occurs here for the simple reason that the barrier and the weak solutions have the same mass. Hence the solutions $u^\epsilon$ cannot possibly have strictly more mass in a ball of $\abs{x} \leq R$ than the barrier.   
Therefore, any positive maximum must occur somewhere in the set $0 < \abs{x} < R$. From this point, we may continue as in the proof of Proposition \ref{prop:MassComp}. Note that unlike the proof of Proposition \ref{prop:MassComp}, the presence of $\gamma(x)$ does not pose a problem here. 
\end{proof} 

\begin{remark}
Theorem \ref{thm:Concentration} shows that any solution with critical mass can only blow up by concentrating all of the mass into a point where $a(x)$ achieves the minimum value.
Hence with the additional constraint of radial symmetry, Theorem \ref{thm:Concentration} directly implies Theorem \ref{thm:GE_CritMass} without the need for Proposition \ref{prop:CritMassComp} unless $\min a(x) = a(0)$.  
\end{remark}

\section*{Acknowledgments} 
The authors would both like to thank Yao Yao and Nancy Rodr\'{i}guez for their many very helpful discussions and suggestions, Nader Masmoudi and Jonas Azzam for helpful discussions regarding the proof of Theorem \ref{thm:Concentration} and Guillaume Bal for the interesting discussions which prompted this research.  
J. Bedrossian was partially supported by NSF Postdoctoral Fellowship in Mathematical Sciences DMS-1103765 and NSF grant DMS-0907931 and I. Kim was partially supported by NSF grant DMS-0970072.    

\section*{Appendix: Pressure Form Comparison}
By Lemma \ref{lem:regularity1}, for any $\e>0$ we have
\begin{equation}\label{theconstant}
L:= \sup_{\epsilon \leq t\leq T^*-\epsilon} |\Delta c| <\infty.
\end{equation}
Let us define the pressure form of $u$:
\begin{equation}\label{pressure}
v=\frac{m}{m-1} (u)^{m-1}.
\end{equation}
Then formally $v$ solves the following equation:
$$
v_t = (m-1)v\Delta v +|Dv|^2 +\nabla v \cdot \nabla c + (m-1) v\Delta c.
$$
 
 \vspace{5pt}
 
 We proceed to prove the``viscosity solution" property of the pressure $v$. The notion of viscosity solutions are first introduced for Hamilton-Jacobi equations by Crandall-Lions (\cite{CL}) and later for fully nonlinear elliptic-parabolic equations (\cite{CIL}) as well as free boundary type problems (see \cite{BV}, \cite{CV} and \cite{KL} for porous medium-type problems.) The advantage of the approach lies in pointwise control of solutions and, in our setting, their free boundaries. More specifically we will show that the initially positive solutions cannot touch down to zero at later times, i.e. that contact lines cannot be nucleated.

\vspace{10pt}

Since $c$ is not $C^2$ up to the zero set of $u$, $\Delta c$ is not well-defined on the free boundary \\$\partial\{u>0\}=\partial\{v>0\}$.  This causes a technical problem for directly applying a standard notion of viscosity solutions to $v$. Hence we will directly prove the necessary properties to be used in our analysis in the next section.

\vspace{10pt}

\begin{definition} \label{def:Crossings}
For nonnegative functions $u$ and $v$ defined in a small neighborhood $\Sigma$ of $(x_0,t_0)$, we say
\begin{itemize}
\item[(a)] $u$ {\it crosses} $v$ {\it from below} at $(x_0,t_0)$ if 
$$
u \leq v \hbox{ in }\Sigma\cap\{t\leq t_0\}\hbox{ and } u(x_0,t_0)= v(x_0,t_0).
$$
\item[(b)] $u$ {\it crosses} $v$ {\it from above} at $(x_0,t_0)$ if 
$$
u \geq v \hbox{ in }\Sigma\cap\{t\leq t_0\}\hbox{ and } u(x_0,t_0)= v(x_0,t_0).
$$\\
\end{itemize}

\end{definition}

\vspace{10pt}

\begin{proposition}\label{viscosity}
For any given domain $\Sigma\subset\Real^d\times [0,T^*-\e)$, let $\phi$ be a nonnegative continuous function in $\Sigma$  which is $C^{2,1}$ in $\overline{\{\phi>0\}}$, with $|D\phi|>0$ on $\partial\{\phi>0\}$. 
Let $v$ be the pressure form of $u$ as defined in \eqref{pressure}. Then the following holds:
\begin{itemize}
\item[(a)] Suppose $v$ crosses $\phi$ from below at $(x_0,t_0)$ in $\overline{\{u>0\}}\cap\{t\leq t_0\}$ in $\Sigma$. Then we have
$$
\phi_t -(m-1)\phi\Delta\phi - |D\phi|^2-\nabla\phi\cdot\nabla c -(m-1)L\phi \leq 0
$$
\\
\item[(b)] Suppose $v$ crosses $\phi$ from above at $(x_0,t_0)$ in $\overline{\{u>0\}}\cap\{t\leq t_0\}$ in $\Sigma$. Then 
we have 
$$
\phi_t -(m-1)\phi\Delta\phi-|D\phi|^2-\nabla\phi\cdot\nabla c +(m-1)L\phi \geq 0.
$$ 
\end{itemize}
Here the constant $L$ is as given in \eqref{theconstant}.
\end{proposition}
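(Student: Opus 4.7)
I would prove only part (a); part (b) follows by the symmetric argument with all inequalities reversed. The plan splits the proof into the interior case $v(x_0, t_0) > 0$, handled by classical parabolic regularity, and the free-boundary case $v(x_0, t_0) = 0$, handled by an approximation with strictly positive smooth solutions as in \cite{KL}.

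In the interior case, $u > 0$ on some neighborhood of $(x_0, t_0)$, so the PKS equation is uniformly parabolic there, and combining this with Lemma \ref{lem:regularity1} (which provides a bounded drift $c \in C^1$) the same classical theory behind Lemma \ref{lem:regularity2} shows $u$, and hence $v$, is $C^{2,1}$ and satisfies the pressure PDE
\[
v_t = (m-1) v \Delta v + |\nabla v|^2 + \nabla v \cdot \nabla c + (m-1) v \Delta c
\]
pointwise. The touching condition $\phi - v \geq 0$ on $\Sigma \cap \{t \leq t_0\}$ (on the complement of $\overline{\{u>0\}}$ it is automatic since $v$ vanishes there) with equality at $(x_0, t_0)$ then yields at this point $\nabla v = \nabla \phi$, $\Delta v \leq \Delta \phi$, and $v_t \geq \phi_t$. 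Substituting into the PDE and using $(m-1)\phi \geq 0$ together with $|\Delta c| \leq L$ produces the claimed inequality directly.

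In the free-boundary case, $\phi(x_0, t_0) = 0$ as well and the assertion reduces to the drift-corrected Darcy inequality $\phi_t \leq |\nabla \phi|^2 + \nabla \phi \cdot \nabla c$ at $(x_0, t_0)$. I would prove this by approximating: let $u_0^n := u_0 + \eta_n$ with $\eta_n \searrow 0$. Lemma \ref{lem:regularity2} makes the associated $u_n$ smooth and strictly positive, so the pressures $v_n := \tfrac{m}{m-1} u_n^{m-1}$ solve the pressure PDE classically, and Lemma \ref{lem:full_approx} together with elliptic regularity for $c_n$ gives, along a subsequence, $v_n \to v$ and $\nabla c_n \to \nabla c$ locally uniformly on $[0, T_+(u_0) - \epsilon) \times \Real^d$. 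Next set
\[
\phi_\delta(x, t) := \phi(x, t) + \delta\bigl((t - t_0)^2 + |x - x_0|^4\bigr)
\]
for small $\delta > 0$, so that $\phi_\delta - v$ has a strict local minimum at $(x_0, t_0)$ over $\Sigma \cap \{t \leq t_0\}$. Uniform convergence forces $\phi_\delta - v_n$ to attain its minimum on a small closed ball around $(x_0, t_0)$ at an interior point $(x_n, t_n) \to (x_0, t_0)$ where $v_n > 0$, so the interior argument applied to $\phi_\delta$ at $(x_n, t_n)$ gives
\[
(\phi_\delta)_t \leq (m-1) v_n \Delta \phi_\delta + |\nabla \phi_\delta|^2 + \nabla \phi_\delta \cdot \nabla c_n + (m-1) L v_n
\]
at $(x_n, t_n)$. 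Sending $n \to \infty$, the two terms proportional to $v_n(x_n, t_n) \to 0$ drop out and the rest passes to the limit by uniform convergence; taking $\delta \to 0$ (the perturbation has vanishing gradient and time derivative at $(x_0, t_0)$) yields the required Darcy inequality.

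The main obstacle is this passage to the limit at the free boundary: $v$ is genuinely non-classical across $\partial\{u > 0\}$ and the coefficient $(m-1)v$ degenerates there, so the interior PDE cannot be invoked directly at $(x_0, t_0)$. The strict-minimum perturbation $\delta\bigl((t-t_0)^2 + |x-x_0|^4\bigr)$ is what rescues the argument, since it pins down the approximating minimizers $(x_n, t_n)$ inside a fixed neighborhood uniformly in $n$, while letting us work entirely within $\{v_n > 0\}$ regardless of how small $v_n(x_n, t_n)$ becomes; the vanishing of $(m-1) v_n$ in the limit is precisely what transforms the interior pressure PDE into the free-boundary motion condition. A secondary point is that the minimum could a priori lie on the time slice $\{t = t_0\}$, but then the one-sided derivative test still yields $\partial_t(\phi_\delta - v_n) \leq 0$ there, which remains sufficient to close the argument. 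This is essentially the viscosity-solution framework of \cite{KL} for porous-medium-type free-boundary problems, which applies in our setting once Lemma \ref{lem:regularity1} lets us treat $c$ as a given bounded drift.
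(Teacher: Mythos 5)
Your overall strategy — handle the interior case by classical parabolic regularity and the free-boundary case by approximating with strictly positive smooth solutions, then localize the touching with a strict-minimum perturbation and pass to the limit — is the right \emph{shape} of argument, and the limit computation (vanishing of the $(m-1)v_n$ terms yielding the Darcy condition, one-sided time derivative on $\{t=t_0\}$) is correct. However, there is a genuine logical gap in the regularization step you choose, and it is not merely cosmetic.

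\medskip

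\noindent\textbf{Circularity.} You invoke Lemma \ref{lem:regularity2} to conclude that the solutions $u_n$ with initial data $u_0 + \eta_n > 0$ are smooth and remain strictly positive. But the proof of Lemma \ref{lem:regularity2} in the paper rests on Lemma \ref{positivity}, which rests on Corollary \ref{cor:LocComp}, which is in turn a direct consequence of Proposition \ref{viscosity} — the very statement you are trying to prove. So your argument is circular: you need the viscosity property of the pressure (for $u_n$) to prove that $u_n$ stays positive, which you are assuming in order to prove the viscosity property. The paper sidesteps this by regularizing the \emph{drift} instead of the initial data: it replaces the coupled system by the decoupled scalar degenerate-diffusion equation $(u_\epsilon)_t = \Delta u_\epsilon^m + \nabla\cdot(u_\epsilon\nabla c_\epsilon)$ with $c_\epsilon = c\ast\eta_\epsilon$ a fixed $C^2$ drift and $u_\epsilon(0) = u(0)$. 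For such an equation, \cite{KL} supplies the viscosity-solution property of the pressure $v_\epsilon$ directly as an external input (no positivity is needed for $u_\epsilon$ since the viscosity theory handles the free boundary of $u_\epsilon$), and a uniform-convergence argument transfers the conclusion to $v$. This decoupling is what makes the Appendix logically self-contained, so that Lemma \ref{lem:regularity2} can later be derived from it.

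\medskip

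\noindent\textbf{Secondary technical issue.} Your perturbation $\phi_\delta = \phi + \delta\bigl((t-t_0)^2 + |x-x_0|^4\bigr)$ makes the minimum strict but does not make $\phi_\delta$ globally $C^{2,1}$: $\phi$ is only assumed $C^{2,1}$ on $\overline{\{\phi>0\}}$ and merely continuous elsewhere, and since $|D\phi|>0$ on $\partial\{\phi>0\}$, the extension of $\phi$ by zero has a genuine gradient jump across that boundary. Because your approximating pressures $v_n$ are positive everywhere, the near-minimizer $(x_n,t_n)$ of $\phi_\delta - v_n$ can fall on the zero side of $\partial\{\phi>0\}$, where you cannot write $\nabla\phi_\delta$ or $\Delta\phi_\delta$. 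The paper's choice $\phi_\delta = (\phi_+)\ast\eta_\delta + m(\delta)$, with $\eta_\delta$ a smooth kernel of exponential (non-compact) decay, produces a \emph{globally} smooth test function and handles this; you would need to incorporate a similar mollification, not just an additive perturbation.

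Apart from these two points — of which the circularity is the essential one — the structure of the limit argument matches the intent of the paper.
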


\begin{proof}
1. Note that $u$ and $v$ are smooth in their positive set, and there the result follows easily. Hence, the only difficult case is when $(x_0,t_0)\in\partial\{v>0\}$.

2.  Let us take $c_\e:= c \ast\eta_\e$, where $\eta_\e$ is the standard mollifier. Let $u_\e$ be the weak solution of 
$$
(u_\e)_t = \Delta(u_\e)^m + \nabla\cdot(u_\e \nabla c_\e),
$$
with initial data $u_\e(x,0)=u(x,0)$. Since $c_\e$ is $C^2$, it follows from \cite{KL} that $u_\e$ is a viscosity solution in the sense defined in therein. In particular, it is shown in \cite{KL} that the statements in the proposition hold for $v_\e$: the pressure form of $u_\e$.
Below we will approximate $u$ by $u_\e$ to prove the proposition. Since $c_\e$ is uniformly bounded in $C^{1,1}$ norm, $u_\e$ is equi-continuous due to Theorem 6.1 of \cite{DiBenedetto83}. Using this fact, parallel arguments leading to Proposition 3.3 of \cite{KimYao11} yields that $u_\e$ uniformly converges to  $u$, and thus $v_\e$ to $v$.

3. Let us now show (a) when $v$ crosses a nonnegative function $\phi\in C^{2,1}(\overline{\{\phi>0\}})$ from below at $(x_0,t_0)\in\partial\{v>0\}$. Let us perturb $\phi$ so that $v$ really crosses $\phi$, not just touching. This can be done by replacing $\phi$ with
$$
\tilde{\phi}(x,t) = (\phi(x,t) -a(t-t_0-b))_+,
$$
where $a$ and $b$ are  small positive constants.

4. If (a) fails then, since $\phi(x_0,t_0)=0$, then $\phi$ satisfies 
$$
(\phi_t-(m-1)\phi\Delta\phi-|D\phi|^2-\nabla\phi\cdot\nabla c - (m-1)L\phi )(x_0,t_0) > 0.
$$

Now let us pick a small $\delta>0$ and take $\phi_\delta(\cdot,t):=(\phi_+) (\cdot,t)* \eta_{\delta}+m(\delta)$ where $\eta(x)$ is a standard mollifier which is smooth and has exponential decay at infinity, and $m(\delta)$ is a constant.
Choose $m(\delta)$ accordingly so that  $v$ is strictly below $\phi_{\delta}$ at $t=t_0$ but crosses $\phi_{\delta}$ from below at $t=t_0+O(\delta)$. Note that this is possible because $\eta$ does not have a compact support.

\vspace{10pt}

Then  due to continuity  of the derivatives of $\phi$ in its support and the corresponding convergence of $\phi_{\delta}$ to $\phi$, $\phi_{\delta}$ satisfies 
\begin{equation}\label{contradiction}
(\phi_{\delta})_t - (m-1)\phi_{\delta}\Delta\phi_{\delta}-|D\phi_{\delta}|^2 - \nabla\phi_{\delta}\cdot \nabla c_\e  -(m-1)L\phi_{\delta} >0.
\end{equation}
in $O(\delta_0)$-neighborhood of $(x_0,t_0)$ if $\e,\delta<<\delta_0$.

\vspace{10pt}

Since $v_\e$ converges uniformly to $v$ as $\e\to 0$,  $v_\e$ crosses $\phi_{\delta}$ from below at $(x_\delta,t_\delta)$, which lies in $O(\delta_0)$-neighborhood of $(x_0,t_0)$ if $\e$ and $\delta$ are chosen small enough. 

Note that at $(x_0,t_0)$ we have
$$
(v_\e)_t \geq (D\phi_{\delta})_t, \quad |Dv_\e| = |D\phi_{\delta}|\hbox{ and } \Delta v_\e \leq \Delta \phi_{\delta}.
$$
 this contradicts \eqref{contradiction} and the fact that $v_\e$ satisfies
$$
(v_\e)_t - (m-1)v_\e\Delta v_\e-|Dv_\e|^2 - \nabla v_\e\cdot \nabla c_\e - (m-1)Lv_\e \leq 0
$$

\end{proof}

\begin{corollary}[Local comparison in pressure variable] \label{cor:LocComp}
In any given parabolic, cylindrical neighborhood $\Sigma$, let $\phi$ be a $C^{2,1}$ function in $\overline{\{\phi>0\}}$ with $|D\Phi|>0$ on $\partial\{\phi>0\}$.
\begin{itemize}
\item[(a)] Suppose that $\phi$ satisfies
$$
\phi_t-(m-1)\phi\Delta\phi-|D\phi|^2-\nabla\phi\cdot\nabla c -(m-1)L\phi > 0 \hbox{ in } \Sigma.
$$
Then $v$ cannot cross $\phi$ from below in $\Sigma$.

\item[(b)] Suppose $\phi$ satisfies
$$
\phi_t-(m-1)\phi\Delta\phi-|D\phi|^2-\nabla\phi\cdot\nabla c +(m-1)L\phi < 0 \hbox{ in } \Sigma.
$$
Then $v$ cannot cross $\phi$ from above in $\Sigma$.
\end{itemize}
\end{corollary}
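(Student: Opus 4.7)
The proof is essentially a direct application of Proposition \ref{viscosity} by contradiction, so the plan is short. I would proceed as follows.

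For part (a), suppose toward contradiction that $v$ crosses $\phi$ from below at some point $(x_0,t_0)\in\Sigma$ in the sense of Definition \ref{def:Crossings}. Since $v\geq 0$ and the crossing requires $v(x_0,t_0)=\phi(x_0,t_0)$, this contact point automatically lies in $\overline{\{u>0\}}$, so the hypothesis of Proposition \ref{viscosity}(a) is satisfied. Applying that proposition at $(x_0,t_0)$ yields
\begin{equation*}
\phi_t-(m-1)\phi\Delta\phi-|D\phi|^2-\nabla\phi\cdot\nabla c-(m-1)L\phi\leq 0,
\end{equation*}
which directly contradicts the strict inequality assumed in the hypothesis of (a). Therefore no such crossing can occur anywhere in $\Sigma$.

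Part (b) is entirely symmetric: a hypothetical crossing from above at $(x_0,t_0)\in\Sigma$ produces, via Proposition \ref{viscosity}(b), the opposite non-strict inequality
\begin{equation*}
\phi_t-(m-1)\phi\Delta\phi-|D\phi|^2-\nabla\phi\cdot\nabla c+(m-1)L\phi\geq 0,
\end{equation*}
which again contradicts the strict hypothesis. The only point that requires any care is ensuring the touching point actually falls in the regime where Proposition \ref{viscosity} applies, namely $\overline{\{u>0\}}$, and that $|D\phi|>0$ on $\partial\{\phi>0\}$ so that $\phi$ is a legitimate test function there; both are built into the statement of the corollary.

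There is no real obstacle: the substantive work — separating the classical case (contact inside $\{v>0\}$) from the free-boundary case, the mollification of $c$ to $c_\varepsilon$, the uniform convergence of pressure approximants $v_\varepsilon\to v$, and the use of the strict differential inequality on a smoothed test function $\phi_\delta$ — has already been absorbed into Proposition \ref{viscosity}. The corollary only converts the pointwise ``touching $\Rightarrow$ non-strict inequality'' dichotomy into a ``strict inequality $\Rightarrow$ no touching'' statement, which is the standard passage from a viscosity sub/supersolution property to a local comparison principle.
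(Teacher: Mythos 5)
Your proposal is correct and matches the paper's intent: the corollary is indeed the direct contrapositive of Proposition \ref{viscosity}, and the paper states it without proof precisely because this passage (``sub/supersolution touching inequality $\Rightarrow$ strict inequality excludes touching'') is the standard conversion in viscosity-solution theory. The one spot where your write-up is a little too quick is the claim that the contact point automatically lies in $\overline{\{u>0\}}$: that is immediate when $\phi(x_0,t_0)>0$ (since then $v(x_0,t_0)>0$), but when $\phi(x_0,t_0)=0$ the contact could a priori sit in $\operatorname{int}\{u=0\}\cap\partial\{\phi>0\}$, which Proposition \ref{viscosity} does not explicitly cover; this degenerate case is harmless here (and glossed over by the paper as well), but it would be worth a remark that it is excluded by the crossing hypothesis being posed in $\overline{\{u>0\}}$, as in Proposition \ref{viscosity}.
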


An immediate consequence of the above proposition is the preservation of positivity for $u$.

\begin{lemma}\label{positivity}
Let $u(x,t)$ be the weak solution associated with the strictly positive continuous initial data $u_0(x)>0$. Then $u(x,t)$ is strictly positive everywhere up to the blow-up time $T_+(u_0)$.
\end{lemma}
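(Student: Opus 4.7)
The plan is to show that $v := \tfrac{m}{m-1} u^{m-1}$ remains strictly positive on $\Real^d \times [0, T_+(u_0))$ by producing a positive, compactly-supported, strict subbarrier $\phi$ for the pressure equation (in the sense of Corollary~\ref{cor:LocComp}(b)) inside a cylinder centered at any chosen point $(x_0, t_0)$. First I would fix $\epsilon > 0$ with $t_0 < T_+(u_0) - \epsilon$; Lemma~\ref{lem:regularity1} then gives a finite $L := \norm{\Delta c}_{L^\infty(\Real^d \times [0, T_+(u_0) - \epsilon))}$ and $N := \norm{\nabla c}_{L^\infty}$. Since $u_0$ is continuous and strictly positive, for every $R > 0$ there is some $v_{\min} > 0$ with $v(\cdot, 0) \geq v_{\min}$ on $\overline{B_R(x_0)}$.

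The candidate is the truncated paraboloid
\begin{equation*}
\phi(x, t) = \left(A - B|x - x_0|^2 - \gamma t\right)_+,
\end{equation*}
which is $C^{2,1}$ on the closure of its positivity set with $|\nabla \phi| = 2B|x - x_0| > 0$ on its free boundary whenever $A - \gamma t > 0$. A direct computation on the positivity set yields
\begin{equation*}
\phi_t - (m-1)\phi \Delta\phi - |\nabla\phi|^2 - \nabla\phi \cdot \nabla c + (m-1)L\phi \;\leq\; -\gamma + (m-1)L A + 2d(m-1)BA + 2\sqrt{AB}\, N.
\end{equation*}
I would then take $A = \tfrac{1}{2} v_{\min}$, choose $B$ small enough that $\sqrt{A/B} < R$ and the $B$-dependent error terms are negligible, and set $\gamma = (m-1) L A (1 + \delta)$ for a small $\delta > 0$. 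With these choices the strict subsolution inequality of Corollary~\ref{cor:LocComp}(b) holds, $\phi(\cdot, 0) \leq A \leq v(\cdot, 0)$ on $\overline{B_R(x_0)}$, and $\phi$ is supported in a strict sub-ball of $B_R(x_0)$, so it vanishes on $\partial B_R(x_0) \times [0, t_0]$; hence the full parabolic boundary of $\Sigma := B_R(x_0) \times [0, t_0]$ satisfies $\phi \leq v$. Corollary~\ref{cor:LocComp}(b) then prevents any crossing of $v$ by $\phi$ from above in the interior, forcing $\phi \leq v$ throughout $\Sigma$; in particular $v(x_0, t) \geq A - \gamma t > 0$ for $0 \leq t < A/\gamma$.

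The main obstacle is that $A/\gamma$ cannot exceed $\tau := 1/((m-1)L)$, so a single application of the subbarrier only settles the case $t_0 < \tau$. I would handle longer times by iteration: once strict positivity of $v$ is known on $\Real^d \times [0, \tau_1]$ for some $\tau_1 < \tau$, Lemma~\ref{lem:regularity1}(b) ensures that $v(\cdot, \tau_1)$ is continuous, and by the previous step it is strictly positive everywhere, so the same quadratic-barrier construction applied with initial time $\tau_1$ (and the $v_{\min}$ associated with $v(\cdot, \tau_1)$ on $\overline{B_R(x_0)}$) extends strict positivity to $[\tau_1, 2\tau_1]$, and so on. Since the per-step window length $\tau$ depends only on $L$, any $t_0 < T_+(u_0) - \epsilon$ is reached in finitely many iterations; letting $\epsilon \to 0$ concludes the proof.
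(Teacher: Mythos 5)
Your proof is correct in spirit and takes a genuinely different route from the paper's. The paper constructs an \emph{expanding} barrier out of the Barenblatt profile: it defines $\tilde B(x,t)=e^{-Mt}\sup_{y\in B_{M-Mt}(x)}B(x,t)$, a supremal-convolution subsolution from \cite{KL}, shifts it back in time by a large $\tau$ so that it sits under $u_0$, and uses the fact that its support sweeps out all of $\Real^d$ as $\tau\to\infty$ to get positivity on $[0,1]$, then iterates. You instead use a \emph{shrinking} truncated paraboloid $\phi=(A-B|x-x_0|^2-\gamma t)_+$ centered at the point you wish to test, verify the strict subsolution inequality of Corollary~\ref{cor:LocComp}(b) directly, and get positivity for a time window $\sim 1/((m-1)L)$, then iterate using the continuity supplied by Lemma~\ref{lem:regularity1}. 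Both arguments hinge on the same two ingredients — the pressure-form viscosity comparison and the a priori bounds on $\nabla c$, $\Delta c$ — so yours is a legitimate alternative and is more elementary in that it avoids the Barenblatt profile and the supremal convolution entirely. One small slip to fix: you write ``choose $B$ small enough that $\sqrt{A/B}<R$,'' but $\sqrt{A/B}<R$ is a \emph{lower} bound $B>A/R^2$, which fights against making the $B$-dependent error $2d(m-1)BA+2\sqrt{AB}\,N$ small. The constraints are compatible, but the order matters: after dividing by $A$, the condition reads $2d(m-1)B+2\sqrt{B/A}\,N<(m-1)L\delta$, and $\sqrt{B/A}\ge 1/R$, so one must first fix $R$ large enough (e.g. $2N/R<(m-1)L\delta/2$), then take $B$ just above $A/R^2$; $v_{\min}$ may shrink as $R$ grows, but that only helps since $B\sim A/R^2$ then shrinks further. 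With the quantifiers in that order the construction goes through, and the per-step window $1/((m-1)L(1+\delta))$ is independent of $A$ and $R$, so the iteration up to $T_+(u_0)-\epsilon$ works exactly as you describe.
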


\begin{proof}
Let $v$ be the corresponding pressure form of $u$.  Let us recall that the Barenblatt profile is given as 
$$
B(x,t):= t^{-\lambda}\left(C-k\frac{|x|^2}{t^{2\mu}}\right)_+
$$
where  $C>0$ is a positive constant and 
$$
\lambda=\dfrac{d(m-1)}{d(m-1)+2}, \mu = \frac{\lambda}{d}, k= \frac{\lambda}{2d}.
$$
$B(x,t)$ then solves the porous medium equation in its pressure form in the viscosity sense (see e.g. \cite{KL}):
$$
B_t - (m-1)B\Delta B -|DB|^2=0.
$$
Let us now define 
$$
\tilde{B}(x,t) : = e^{-Mt}\sup_{y\in B_{M-Mt}(x)} B(x,t) \hbox{ for } 0\leq t\leq 1.
$$
Then due to  Proposition 2.13  in \cite{KL}  $\tilde{B}$ satisfies 
$$
\tilde{B}_t - (m-1)\tilde{B}\Delta\tilde{B} -|D\tilde{B}|^2+M|D\tilde{B}|+M\tilde{B} \leq 0
$$
for $0\leq t\leq 1$.  Let us choose 
$$
M = (m-1)\max\left(\|\nabla c \|_{L^\infty} , L\right).
$$ 
(Note that the first term in above upper bound is bounded before the blow-up time).

\vspace{10pt}

Since $B(x,t)$ vanishes uniformly to zero as $C\to 0$, so does $\tilde{B}$. Hence for any $\tau>0$ one can choose $C=C(\tau)$ sufficiently small so that $\tilde{B}(x, \tau) \leq u_0$. Then Corollary~\ref{cor:LocComp} yields that
$$
\tilde{B}(x,t+\tau) \leq u(x,t)\hbox{ for } 0\leq t\leq 1.
$$

Since $\tau$ can be arbitrarily large and $\tilde{B}$ has its support expanding to the whole domain as $\tau$ grows to infinity, we conclude that $u$ is strictly positive for $0\leq t\leq 1$.

\vspace{10pt}

We can iterate above argument up to the blow-up time to conclude (since the solution and $L$ remain bounded until blow-up).  

\end{proof}

\vfill\eject
\bibliographystyle{plain}
\bibliography{nonlocal_eqns,dispersive}

\end{document}